\documentclass[11pt]{article}%
\usepackage{amssymb}
\usepackage{amsfonts}
\usepackage{amsmath}
\usepackage{amssymb}
\usepackage{color}
\usepackage{graphicx}
\usepackage{caption}%
\setcounter{MaxMatrixCols}{30}
\usepackage{hyperref}
\usepackage{float}
\usepackage{subcaption}
\usepackage{academicons}


\usepackage{tabularx}

\providecommand{\U}[1]{\protect\rule{.1in}{.1in}}
\newtheorem{theorem}{Result}

\newtheorem{corollary}[theorem]{Corollary}

\newtheorem{example}[theorem]{Example}
\newtheorem{proposition}[theorem]{Proposition}
\newtheorem{remark}[theorem]{Remark}
\newtheorem{definition}[theorem]{Definition}
\newenvironment{proof}[1][Proof]{\noindent \textbf{#1.} }{\hspace{16cm} \rule{0.5em}{0.5em}}
\textwidth 170mm \textheight 235mm \oddsidemargin -4mm
\evensidemargin -4mm \topmargin -12mm

\newcommand\blfootnote[1]{%
	\begingroup
	\renewcommand\thefootnote{}\footnote{#1}%
	\addtocounter{footnote}{-1}%
	\endgroup
}

\begin{document}
	
	\title{\textbf{Robust statistical inference for accelerated life-tests with one-shot devices under log-logistic distributions  }}
	\author{María González$^1$, Mar\'ia Jaenada$^1$ and Leandro Pardo$^1$}
	\date{ }
	\maketitle

\indent $^1$Department of Statistics and O.R., Complutense University of Madrid, Madrid, Spain.\\

\blfootnote{Correspondence:  M.Jaenada: mjaenada@ucm.es	and L. Pardo: lpardo@mat.ucm.es}

\begin{abstract}
A one-shot device is a unit that operates only once, after which it is either destroyed or needs to be rebuilt. For this type of device, the operational status can only be assessed at a specific inspection time, determining whether failure occurred before or after it. Consequently, lifetimes are subject to left- or right-censoring.
One-shot devices are usually highly reliables. To analyze the reliability of such products, an accelerated life test (ALT) plan is typically employed by subjecting the devices to increased levels of stress factors, thus allowing life characteristics observed under high-stress conditions to be extrapolated to normal operating conditions. By accelerating the degradation process, ALT significantly reduces both the time required for testing and the associated experimental costs.

Recently, robust inferential methods have gained considerable interest in statistical analysis. Among them, weighted minimum density power divergence estimators (WMDPDEs) are widely recognized for their robust statistical properties with small loss of efficiency. In this work, robust WMDPDE and associated statistical tests are developed under a log-logistic lifetime distribution with multiple stresses. Explicit expressions for the estimating equations and asymptotic distribution of the estimators are obtained. Further,  a Monte Carlo simulation study is presented to evaluate the performance of the WMDPDE in practical applications.

\end{abstract}

\section{Introduction}
One-shot devices are products designed for single-use applications, such as
automobile airbags, fuel injectors, missiles, fire extinguishers, explosive
charges, munitions, magnetorheological fluids, and thermal batteries. The
practical utility of one-shot devices is highlighted in the work of Fan et
al. (2009), who applied one-shot device testing data to analyze highly
reliable electro-explosive devices using a Bayesian approach with
exponential, normal, and beta prior distributions. Their importance across
various fields is further explored in the book by Balakrishnan et al. (2021)
and the review by Balakrishnan and Ling (2023). A key characteristic of
one-shot devices is that they are destroyed immediately after use, meaning
their actual lifetime remains unknown. The only available information is
whether the device failed before or after a given test time. Consequently,
experimental results for these devices are binary, classified as either
right-censored (failure) or left-censored (success). Notably, this setup
also has practical applications in survival analysis, particularly in
medicine, where it is known as current status data.

On the other hand, most of the products manufactured nowadays are of high
quality, so they will have a long service life. If the products are tested
under normal conditions, the time to failure of the products will be quite
long; therefore, the test duration will also be long, and costly. Besides, a
reliability experiment with scarce failures observed will result in
insignificant inferential results. Therefore, inducing failures in shorter
experimental time would be desirable for accurate inference. To reduce the
experimental time and cost, Accelerated Life-Tests (ALTs) plans are often
used to examine the reliability of such reliable products. ALTs shortens the
life of products by increasing the levels of certain stressors influencing
the degradation of the product. Some example of stress factors are air
pressure, temperature, humidity and voltage, that can artificially
controlled in an laboratory.  However, the main purpose of reliability
analysis is understand the lifetime behaviour or some related
characteristics under normal operating conditions. Then, after estimating
the model parameters from data collected under high-stress conditions,
results should be extrapolated to normal operating conditions; see Meeter
and Meeker (1994) and Meeker et al. (1998).

Parametric inference usually assumes a certain family of distributions
depending on certain parameters to model the data. In ALTs, the model
parameters of the lifetime distribution must be related to stress factors,
so that measurements taken during the experiment can then be extrapolated
back to the expected performance under normal operating conditions. In this
paper, we use a log-linear relationship between the stress level and the
log-logistic distribution to relate the log-logistic parameters with the
levels of stress.

Likelihood inference for one-shot device testing data under several popular
lifetime distributions has been extensively studied in the literature (see
Balakrishnan and Ling (2012a,b,2013,2014). These results are essentially
based on maximum likelihood estimator (MLE). The MLE is known to be
asymptotically efficient and, specifically, a BAN (Best Asymptotically
Normal) estimator. However, it is sensitive to outliers and sampling errors,
which highlights the need for robust alternatives. To address the issue of
robustness without a significant loss of efficiency, Balakrishann et al.
(2019a, b; 2020a, b; 2921; 2022a, b) developed and studied the weighted
minimum density power divergence estimators (WMDPDE) in the context of
one-shot devices with different lifetime distributions under a single and
multiple stress factors, as well as Wald-type tests based on WMDPDE for some
of these families. Baghel and Mondal (2024a,b) presented also some results
of robust inference for one-shot devices. A review about WMDPDE as well as
Wald-type tests based on them can be seen in Balakrishnan et al (2021).

In this paper we consider robust statistical inference for ALTs with
one-shot devices under log-logistic lifetime distribution. The Log-Logistic
probability distribution is a classical lifetime distribution in
reliability. It has the advantage (like the Weibull and exponential model)
of having simple algebraic expressions for reliability and hazard function.
Its failure rate function is not strictly monotonic therefore, it is
flexible to express the failure rate of products. It is therefore more
convenient than the log-normal distribution in handling censored data, while
providing a good approximation to it except in the extreme tails, see Du,
and Gui (2019), Bennett (983), Granzotto, and Louzada (2015) and Serkan
(2012).

\bigskip 
Section 2 presents the model and the assumed distribution, along
with the Maximum Likelihood Estimator (MLE). In Section 3, we introduce the
Weighted Minimum Density Power Divergence Estimator (WMDPDE) and derive the
asymptotic distribution for lifetimes under a log-logistic distribution.
Section 4 addresses robustness by analyzing the influence function, while
Section 5 introduces the Wald-type and Rao-type tests for hypothesis testing
within the assumed model. Finally, in an empirical context, Section 6
provides a numerical analysis, including Monte Carlo simulations and
estimations based on a real dataset.
\section{Model description and maximum likelihood estimator}
\label{sec:descripcion}
Let us assume that the one-shot device data from a reliability testing experiment are stratified into \( I  \) testing conditions with $K_i$  devices placed under the \(i\)-th testing condition \(\ (i = 1, \ldots, I)\) . Each testing condition is defined by the combination of \(J\) stress factors \( x_{ij}, j = 0, 1, \ldots, J \) at fixed stress levels. Devices functioning status are observed at fixed inspection times \( \tau_i \) \( i = 1, \ldots, I \), which are pre-specified in advance. The vector of testing conditions for each stratum \( i=1,...,I \) will be denoted by \( \boldsymbol{x}_i \) \(= (x_{i0},...,x_{iJ}) \). After the inspection, the number of failures under the $i$-th testing condition among the $K_i$ devices is recorded as $n_i.$    \( i = 1, \ldots, I \)  . Table \ref{table:datos_dispositivos} presents the observed data.
\begin{table}[h]
	\captionsetup{justification=centering, skip=5pt, position=above}
	\caption{Data for one-shot devices with multiple stress levels observed at different inspection times}
	\centering
	\begin{tabular}{|c|c|c|c|c|c|c|}
		\hline
		Condition & Inspection Time & Failures & Stress Factor 1 & ... & Stress Factor $J$ & Devices \\
		\hline
		1 & $\tau_1$ & $n_1$ & $x_{11}$ & ... & $x_{1J}$ & $K_1$ \\
		2 & $\tau_2$ & $n_2$ & $x_{21}$ & ... & $x_{2J}$ & $K_2$ \\
		... & ... & ... & ... & ... & ... & ... \\
		$j$ & $\tau_j$ & $n_j$ & $x_{j1}$ & ... & $x_{jJ}$ & $K_j$ \\
		... & ... & ... & ... & ... & ... & ... \\
		$I$ & $\tau_I$ & $n_I$ & $x_{I1}$ & ... & $x_{IJ}$ & $K_I$ \\
		\hline
	\end{tabular}
	\label{table:datos_dispositivos}
\end{table}

Let \( G \) be the true cumulative distribution function underlying the lifetimes of the devices and \( g \) the corresponding probability density function.
We shall assume that the life of the devices, \( T_{ik}, (i = 1, 2, \ldots, I) \), under the testing condition \( i \), follows a log-logistic distribution with probability density function
\begin{equation}
	f_{\alpha_i,\beta_i}(t, \boldsymbol{x}_i) = \frac{\beta_i \alpha_i^{\beta_i} t^{\beta_i - 1}}{(t^{\beta_i} + \alpha_i^{\beta_i})^2}, \quad t > 0 \ (\alpha_i, \beta_i > 0)
	\label{eq:Fdensity}
\end{equation}
and cumulative distribution function
\begin{equation}
	F_{\alpha_i,\beta_i}(t, \boldsymbol{x}_i) = \frac{t^{\beta_i}}{t^{\beta_i} + \alpha_i^{\beta_i}}, \quad t > 0 \ (\alpha_i, \beta_i > 0)
	\label{eq:Fdistribution}
\end{equation}
where $\alpha_i > 0$ is the scale parameter and corresponds to the median of the distribution, and $\beta_i > 0$ is the parameter that controls the shape of the distribution, which is unimodal for $\beta_i > 1$. 

The survival or reliability function and the hazard rate function can be straightforward derived from the distribution function described in equation (\ref{eq:Fdistribution}) as follows
\begin{equation}
	R_{\alpha_i,\beta_i}(t, \boldsymbol{x}_i) =  \frac{\alpha_i^{\beta_i}}{t^{\beta_i} + \alpha_i^{\beta_i}} \quad t > 0  \hspace{2mm} \text{   and   }  \hspace{2mm} h_{\alpha_i,\beta_i}(t, \boldsymbol{x}_i) =  \frac{\beta_i t^{\beta_i - 1}}{t^{\beta_i} + \alpha_i^{\beta_i}} \text{ ,}  (\alpha_i, \beta_i > 0)
	\label{eq:rloglogis}
\end{equation}
respectively.
\begin{figure}
	\centering
	\includegraphics[width=1\linewidth]{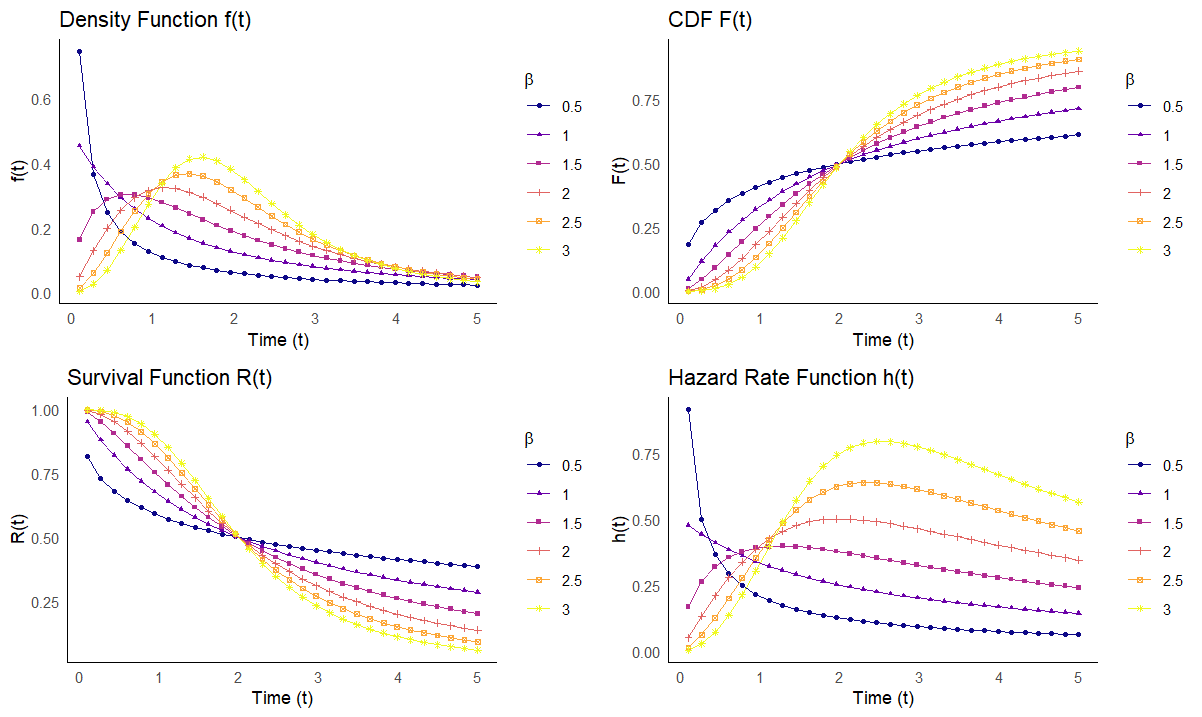}
	\caption{Log-Logistic Distribution Functions with Fixed Scale Parameter \(\alpha = 2\) and Varying Shape Parameter}
	\label{fig:distrib}
\end{figure}

Noteworthy, the hazard ratio is monotonically decreasing for \(\beta_i \leq 1\), while for \(\beta_i > 1\) it increases to a maximum atained at 
\( t = \left( \frac{\alpha_i - 1}{\beta_i} \right)^{\frac{1}{\alpha_i}} \)
and then decreases to zero as \( t \to \infty \). This characteristic proves valuable in scenarios where the event risk varies, displaying non-constant or non-monotonic behaviour. This property can be observed in Figure \ref{fig:distrib}, as well as the density function, cumulative distribution and survival function. 
The log-logistic distribution shares similarities with both the Weibull and log-normal distributions, which are commonly used in survival analysis. The Weibull distribution is suited for modelling monotonic hazard rates, which either increase or decrease depending on its shape parameter. On the other hand, both the log-logistic and log-normal distributions can represent non-monotonic hazard rates, with the hazard rate increasing to a peak and then decreasing. An advantage of the log-logistic distribution is its mathematically simpler expressions for the hazard and survival functions, making it more straightforward to use in practical applications.

Because the primary goal of ALT is to extrapolate results under normal operating conditions, for ALT tests it is necessary to relate the lifetime distribution of the devices the stress levels at which units are tested. Here, we assume a log-linear link function between the stress levels and the log-logistic distribution parameters as follows,  
\begin{equation*}
	\alpha_i = \exp \left( \sum_{j=0}^{J} a_j x_{ij} \right) \quad \text{and} \quad \beta_i = \exp \left( \sum_{j=0}^{J} b_j x_{ij} \right) \text{ ,}
\end{equation*}
allowing $\alpha_i$ and $\beta_i$ to differ depending on the stress conditions. Therefore, the parameters to be estimated in our model are
\(
\boldsymbol{\theta} = ( a_j, b_j, j = 0, \ldots, J ) \text{ .}
\)
In the current framework, the observed data represent counts of failures under different stress conditions. Given the discrete nature of grouped the data, they should modelled by a discrete probability distributions, such as the Bernoulli distribution.

Let us first introduce the theoretical probability vectors of Bernoulli distributions as
\begin{equation}
	\boldsymbol{\pi}_i(\boldsymbol{\theta}) = (\pi_{i1}(\boldsymbol{\theta}), \pi_{i2}(\boldsymbol{\theta}))^T = (F_{\alpha_i, \beta_i}(\tau_i, \boldsymbol{x}_i), R_{\alpha_i, \beta_i}(\tau_i, \boldsymbol{x}_i))^{T}, \quad i = 1, \ldots, I,
\end{equation}
where the first element represents the theoretical probability of failure before the inspection time, and the second represents the theoretical reliability function. 

Now we shall define the empirical probability vectors:
\begin{equation}
	\hat{\boldsymbol{p}}_i = (\hat{p}_{i1}, \hat{p}_{i2})^T  =( \frac{n_i}{K_i},  1 - \frac{n_i}{K_i})^T, \quad i = 1, \ldots, I,
\end{equation}
where \(\hat{p}_{i1} = \frac{n_i}{K_i} \) is the observed proportion of failures in the \(i\)-th group, and \(\hat{p}_{i2} = 1 - \hat{p}_{i1}\) represents the observed proportion of successes. The empirical probability vector \(\hat{\boldsymbol{p}}_i\) constitutes a non-parametric estimator of the theoretical probabilities \(\boldsymbol{\pi}_i(\boldsymbol{\theta})\).

With the previous notation, the likelihood function based on the observed data is given by
\begin{equation}
	L(\boldsymbol{\theta}) = \prod_{i=1}^{I}  [F_{\boldsymbol{\theta}}(\tau_i, \boldsymbol{x}_i)]^{n_i} [R_{\boldsymbol{\theta}}(\tau_i, \boldsymbol{x}_i)]^{K_i - n_i} = \prod_{i=1}^{I}  \pi_{i1}(\boldsymbol{\theta})^{n_i} \pi_{i2}(\boldsymbol{\theta})^{K_i - n_i} =  \prod_{i=1}^{I}  \frac{t^{\beta_i \cdot n_i}}{(t^{\beta_i} + \alpha_i^{\beta_i})^{K}} \cdot \alpha_i^{\beta_i(K-n_i)} \text{,}
	\label{eq:verosim}
\end{equation}
and the Maximum Likelihood Estimator (MLE), \(\hat{\boldsymbol{\theta}} = ( \hat{a}_j, \hat{b}_j, j = 0, \ldots, J )\) is then given by
\(
\hat{\boldsymbol{\theta}} = \arg \max_{\boldsymbol{\theta}} \text{ } log L(\boldsymbol{\theta})
\).
The MLE is recognized as asymptotically efficient and a BAN (\textit{Best Asymptotically Normal}) estimator. Nevertheless, its performance may be influenced by the presence of outliers in the sample or by inaccuracies arising from sampling errors, a property referred to as robustness.

An alternative characterization of the MLE, \(\hat{\boldsymbol{\theta}}  \), can be formulated using the Kullback-Leibler divergence. This perspective serves as the foundation for deriving the Weighted Minimum Density Power Divergence Estimator (WMDPDE) within the model framework. 
The Kullback-Leibler divergence between the empirical probability vector \(\hat{\boldsymbol{p}}_i\) and the theoretical probability vector for the testing condition $i$, \(\boldsymbol{\pi_i}(\boldsymbol{\theta})\), is given by
\allowdisplaybreaks
\[
D_{KL}(\boldsymbol{\hat{p_i}}, \boldsymbol{\pi_i}(\boldsymbol{\theta})) = \hat{p}_{i1} \log \frac{\hat{p}_{i1}}{\pi_{i1}(\boldsymbol{\theta})} + \hat{p}_{i2} \log \frac{\hat{p}_{i2}}{\pi_{i2}(\boldsymbol{\theta})} 
= \frac{n_i}{K_i} \log \frac{\frac{n_i}{K_i}}{F_{\boldsymbol{\theta}}(\tau_i, \boldsymbol{x}_i)} + \frac{K_i - n_i}{K_i} \log \frac{\frac{K_i - n_i}{K_i}}{1 - F_{\boldsymbol{\theta}}(\tau_i, \boldsymbol{x}_i)} \text{ .}
\]
The weighted Kullback-Leibler divergence ($WD_{KL}$) between the probability distributions $\hat{\boldsymbol{p}}_i$ and $\boldsymbol{\pi_i}(\boldsymbol{\theta})$, $i = 1, \ldots, I$, is defined as:
\[ WD_{KL}(\boldsymbol{\theta}) = \sum_{i=1}^{I} \frac{K_i}{K} D_{KL}(\hat{\boldsymbol{p}}_i , \boldsymbol{\pi_i}(\boldsymbol{\theta})) \] 
where \( K = K_1 + K_2 + \cdots + K_I \). It is not difficult to verify that   
\begin{equation}
	WD_{KL}(\boldsymbol{\theta}) = c - \frac{1}{K} \log L(\boldsymbol{\theta})
	\label{eq:relacion_wdkl}
\end{equation}
where \( c \) is a constant that does not depend on \( \boldsymbol{\theta} \) and \(L(\boldsymbol{\theta})\) was defined in (\ref{eq:verosim}). Based on (\ref{eq:relacion_wdkl}) we can define the MLE by
\(
\hat{\boldsymbol{\theta}} = \arg \min_{\boldsymbol{\theta}} \text{WD}_{KL} (\boldsymbol{\theta}) \text{ .}
\)

\section{The minimum density power divergence for one-shot device with log-logistic lifetime}
The relationship observed in the previous section between the MLE and the Kullback-Leibler divergence suggests defining a class of estimators using a divergence measure  distinct from the Kullback-Leibler divergence, and ideally, these estimators would address the lack of robustness associated with the MLE. Given the good performance of the density power divergence (DPD) for both destructive and non-destructive one-shot device models (see Balakrishnan et al., 2019a and b; 2020a and 2024a and b, Baghel and Kondal (2023,2024a,b)). We use the DPD approach here.

The DPD measure was considered for the first time in Basu et al., 1998. 
The DPD with tuning parameter \(\gamma >0\) between the probability vectors \(\hat{p_i}\) and \( \pi_i(\boldsymbol{\theta})\) is given by
\begin{equation}
	D_\gamma(\hat{p_i}, \pi_i(\boldsymbol{\theta})) = \left( \pi_{i1}^{\gamma+1}(\boldsymbol{\theta}) + \pi_{i2}^{\gamma+1}(\boldsymbol{\theta}) \right) - \frac{\gamma + 1}{\gamma} \left( \hat{p}_{i1} \pi_{i1}(\boldsymbol{\theta})^\gamma + \hat{p}_{i2} \pi_{i2}(\boldsymbol{\theta})^\gamma \right) + \frac{1}{\gamma} \left( \hat{p}_{i1}^{\gamma+1} + \hat{p}_{i2}^{\gamma+1} \right).
	\label{eq:dpd_divergence}
\end{equation}
As \(\gamma \to 0\), the divergence measure converges to the Kullback-Leibler divergence, while for \(\gamma = 1\), it reduces to the euclidean distance between the probability vectors \(\hat{\boldsymbol{p}}_i\) and \(\pi_i(\boldsymbol{\theta})\). The tuning parameter \(\gamma > 0\) controls the trade-off between robustness and asymptotic efficiency of the parameter estimates.
From the DPDs, \(D_\gamma(\hat{p_i}, \pi_i(\boldsymbol{\theta}))\), the weighted density power divergence (WDPD) is defined as
\[
\text{WD}_\gamma (\theta) = \sum_{i=1}^{I} \frac{K_i}{K} D_\gamma(\hat{p_i}, \pi_i(\boldsymbol{\theta})),
\]
where \(K = K_1 + K_2 + \cdots + K_I\). Thus, the weighted minimum density power divergence estimator (WMDPDE) is defined by
\(
\hat{\boldsymbol{\theta}}_\gamma = \arg \min_{\boldsymbol{\theta}} \text{WD}_\gamma (\boldsymbol{\theta})
\).
The term \(\frac{1}{\gamma} \left( \hat{p}_{i1}^{\gamma+1} + \hat{p}_{i2}^{\gamma+1} \right)\) in  (\ref{eq:dpd_divergence}) plays no role in the minimization with respect to \(\boldsymbol{\theta}\). Thus, the problem reduces to minimizing, with respect to \(\boldsymbol{\theta}\), the objective function
\begin{equation}
	WD_\gamma^*(\boldsymbol{\theta}) = \sum_{i=1}^{I} \frac{K_i}{K} D^*_\gamma (\hat{p_i}, \pi_i(\boldsymbol{\theta})) 
	\label{eq:fobj} \text{ ,}
\end{equation}
being 
\begin{equation}
	D_\gamma^*(\hat{p_i}, \pi_i(\boldsymbol{\theta})) = \left( \pi_{i1}^{\gamma+1}(\boldsymbol{\theta}) + \pi_{i2}^{\gamma+1}(\boldsymbol{\theta}) \right) - \frac{\gamma + 1}{\gamma} \left( \hat{p}_{i1} \pi_{i1}(\boldsymbol{\theta})^\gamma + \hat{p}_{i2} \pi_{i2}(\boldsymbol{\theta})^\gamma \right)  \text{ .}
	\label{eq:medida_f}
\end{equation}  
The following theorem presents the system of equations that must be solved to obtain the WMDPDE under log-logistic lifetimes.

\begin{theorem} The WMDPDE of \(\boldsymbol{\theta}\) with tuning parameter \(\gamma \geq 0\) for log-logistic lifetimes, \(\hat{\boldsymbol{\theta}}_\gamma\), can be obtained as the solution to the following system of equations:
	\begin{equation}
		\sum_{i=1}^{I} \left( -\beta_i \boldsymbol{x}_i, \log \frac{\tau_i}{\alpha_i} \beta_i \boldsymbol{x}_i \right) \left( K_i \frac{\tau_i^{\beta_i}}{\tau_i^{\beta_i} + \alpha_i^{\beta_i}} - n_i \right) \left( \frac{\tau_i^{\beta_i \gamma} \alpha_i^{\beta_i} + \alpha_i^{\beta_i \gamma} \tau_i^{\beta_i}}{(\tau_i^{\beta_i} + \alpha_i^{\beta_i})^{\gamma + 1}} \right) = \boldsymbol{\mathbf{0}}_{2(J+1)}  \text{ .}
		\label{eq:ecuacionesloglin} 
	\end{equation}
	where \(\boldsymbol{\mathbf{0}}_{2(J+1)}\) is the null column vector of dimension \(2(J+1)\).
	\label{th:ecuaciones}
\end{theorem}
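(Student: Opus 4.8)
The plan is to obtain the estimating equations by setting $\nabla_{\boldsymbol{\theta}} WD_\gamma^*(\boldsymbol{\theta})=\boldsymbol{0}$, where $WD_\gamma^*$ is the objective function in (\ref{eq:fobj})--(\ref{eq:medida_f}), and then reducing the resulting expression using the binary (Bernoulli) structure of the model together with the log-linear link. Since $WD_\gamma^*(\boldsymbol{\theta})=\sum_{i=1}^{I}\tfrac{K_i}{K}D_\gamma^*(\hat{\boldsymbol{p}}_i,\boldsymbol{\pi}_i(\boldsymbol{\theta}))$, it suffices to differentiate each summand.

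First I would differentiate $D_\gamma^*$ in (\ref{eq:medida_f}) by the chain rule, obtaining
\[
\nabla_{\boldsymbol{\theta}}D_\gamma^*(\hat{\boldsymbol{p}}_i,\boldsymbol{\pi}_i(\boldsymbol{\theta}))=(\gamma+1)\sum_{r=1}^{2}\pi_{ir}(\boldsymbol{\theta})^{\gamma-1}\bigl(\pi_{ir}(\boldsymbol{\theta})-\hat{p}_{ir}\bigr)\nabla_{\boldsymbol{\theta}}\pi_{ir}(\boldsymbol{\theta}).
\]
The key simplification is that $\pi_{i1}(\boldsymbol{\theta})+\pi_{i2}(\boldsymbol{\theta})=1$ and $\hat{p}_{i1}+\hat{p}_{i2}=1$, so $\nabla_{\boldsymbol{\theta}}\pi_{i2}(\boldsymbol{\theta})=-\nabla_{\boldsymbol{\theta}}\pi_{i1}(\boldsymbol{\theta})$ and $\pi_{i2}(\boldsymbol{\theta})-\hat{p}_{i2}=-(\pi_{i1}(\boldsymbol{\theta})-\hat{p}_{i1})$; hence the two terms of the sum merge into
\[
\nabla_{\boldsymbol{\theta}}D_\gamma^*(\hat{\boldsymbol{p}}_i,\boldsymbol{\pi}_i(\boldsymbol{\theta}))=(\gamma+1)\bigl(\pi_{i1}(\boldsymbol{\theta})-\hat{p}_{i1}\bigr)\bigl(\pi_{i1}(\boldsymbol{\theta})^{\gamma-1}+\pi_{i2}(\boldsymbol{\theta})^{\gamma-1}\bigr)\nabla_{\boldsymbol{\theta}}\pi_{i1}(\boldsymbol{\theta}).
\]
Moreover $\pi_{i1}(\boldsymbol{\theta})-\hat{p}_{i1}=F_{\alpha_i,\beta_i}(\tau_i,\boldsymbol{x}_i)-\tfrac{n_i}{K_i}=\tfrac{1}{K_i}\bigl(K_iF_{\alpha_i,\beta_i}(\tau_i,\boldsymbol{x}_i)-n_i\bigr)$, so the weight $K_i/K$ in $WD_\gamma^*$ will cancel the $1/K_i$.

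Next I would compute $\nabla_{\boldsymbol{\theta}}\pi_{i1}(\boldsymbol{\theta})$ via the link. Writing $u_i=(\tau_i/\alpha_i)^{\beta_i}$ so that $\pi_{i1}=u_i/(1+u_i)$ and $\pi_{i2}=1/(1+u_i)$, one gets $\nabla_{\boldsymbol{\theta}}\pi_{i1}=\tfrac{u_i}{(1+u_i)^2}\nabla_{\boldsymbol{\theta}}\log u_i$. Since $\log u_i=\beta_i\log(\tau_i/\alpha_i)$ with $\log\alpha_i=\sum_{j}a_jx_{ij}$ and $\log\beta_i=\sum_{j}b_jx_{ij}$, differentiating with respect to the $a$-block gives $-\beta_i\boldsymbol{x}_i$ and with respect to the $b$-block gives $\log(\tau_i/\alpha_i)\,\beta_i\boldsymbol{x}_i$, i.e. $\nabla_{\boldsymbol{\theta}}\log u_i=\bigl(-\beta_i\boldsymbol{x}_i,\ \log\tfrac{\tau_i}{\alpha_i}\beta_i\boldsymbol{x}_i\bigr)$, which is exactly the vector in (\ref{eq:ecuacionesloglin}). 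Then I would reduce the remaining scalar factor: substituting $\pi_{i1},\pi_{i2}$ gives $\bigl(\pi_{i1}^{\gamma-1}+\pi_{i2}^{\gamma-1}\bigr)\tfrac{u_i}{(1+u_i)^2}=\tfrac{u_i^{\gamma}+u_i}{(1+u_i)^{\gamma+1}}$, and replacing $u_i=\tau_i^{\beta_i}/\alpha_i^{\beta_i}$ and clearing the powers of $\alpha_i^{\beta_i}$ turns this into $\tfrac{\tau_i^{\beta_i\gamma}\alpha_i^{\beta_i}+\alpha_i^{\beta_i\gamma}\tau_i^{\beta_i}}{(\tau_i^{\beta_i}+\alpha_i^{\beta_i})^{\gamma+1}}$. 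Assembling the three pieces, $\nabla_{\boldsymbol{\theta}}WD_\gamma^*(\boldsymbol{\theta})$ equals $\tfrac{\gamma+1}{K}$ times the left-hand side of (\ref{eq:ecuacionesloglin}); equating it to $\boldsymbol{0}$ and discarding the nonzero constant yields the system, proving the claim for $\gamma>0$. For $\gamma=0$ the same system follows either by letting $\gamma\to0^{+}$ in the estimating equations or, equivalently, by recalling from (\ref{eq:relacion_wdkl}) that the $\gamma=0$ WMDPDE is the MLE and differentiating $\log L(\boldsymbol{\theta})$ in (\ref{eq:verosim}) directly.

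I do not expect any conceptual obstacle here; the work is purely computational, and the only points requiring care are (i) exploiting the unit-sum constraints to collapse the two Bernoulli terms into a single one and (ii) carrying out the algebraic reduction of the scalar factor to the closed form displayed in the theorem.
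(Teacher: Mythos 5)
Your proposal is correct and follows essentially the same route as the paper: set the gradient of $WD_\gamma^*$ to zero, collapse the two Bernoulli terms via $\pi_{i1}+\pi_{i2}=1$ into a single factor $(\pi_{i1}-\hat p_{i1})(\pi_{i1}^{\gamma-1}+\pi_{i2}^{\gamma-1})\nabla\pi_{i1}$, substitute the partial derivatives of $F_{\alpha_i,\beta_i}$ under the log-linear link, and simplify the scalar to the stated closed form. Your parametrization through $u_i=(\tau_i/\alpha_i)^{\beta_i}$ only streamlines the same algebra the paper carries out componentwise with equations (\ref{eq:derFaj}) and (\ref{eq:derFbj}).
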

\begin{proof} (See Appendix: Section \ref{suplmaterial_1})
\end{proof}
Before deriving the asymptotic distribution of \(\hat{\boldsymbol{\theta}}_\gamma\), the following lemma is introduced:

\begin{proposition}
	\label{prop:eclibro}
	Let \(\boldsymbol{\theta}_0\) denote the true value of the parameter and \(F_{\boldsymbol{\theta}}(\tau_i, x_i)\) the cdf modelling the one-shot devices under test. Under some regularity conditions, the asymptotic distribution of the WMDPDE, \( \hat{\boldsymbol{\theta}}_\gamma \), is given by the following expression:
	\[ 
	\sqrt{K} (\hat{\boldsymbol{\theta}}_\gamma - \boldsymbol{\theta}_0) \underset{K \rightarrow \infty}{\overset{\mathcal{L}}{\to}} \mathcal{N}\biggr( \boldsymbol{0}_{2(J+1)}, \boldsymbol{J}_\gamma^{-1}(\boldsymbol{\theta}_0) \boldsymbol{K}_\gamma (\boldsymbol{\theta}_0) \boldsymbol{J}_\gamma^{-1}(\boldsymbol{\theta}_0)\biggr),
	\]    
	where
	\[
	\boldsymbol{J}_\gamma (\boldsymbol{\theta}) = \sum_{i=1}^{I} \frac{K_i}{K}  \biggr( F_{\boldsymbol{\theta}}(\tau_i, \boldsymbol{x}_i)^{\gamma-1} + R_{\boldsymbol{\theta}}(\tau_i, \boldsymbol{x}_i)^{\gamma-1} \biggr) \frac{ \partial F_{\boldsymbol{\theta}}(\tau_i, \boldsymbol{x}_i)}{\partial \boldsymbol{\theta} } \frac{ \partial F_{\boldsymbol{\theta}}(\tau_i, \boldsymbol{x}_i)}{\partial \boldsymbol{\theta}^T},
	\]
	\[
	\boldsymbol{K}_{\gamma}(\boldsymbol{\theta}) = \sum_{i=1}^{I}  \frac{K_i}{K} F_{\boldsymbol{\theta}}(\tau_i, \boldsymbol{x}_i) R_{\boldsymbol{\theta}}(\tau_i, \boldsymbol{x}_i)  \biggr( F_{\boldsymbol{\theta}}(\tau_i, \boldsymbol{x}_i)^{\gamma-1} + R_{\boldsymbol{\theta}}(\tau_i, \boldsymbol{x}_i)^{\gamma-1} \biggr)^2 \frac{ \partial F_{\boldsymbol{\theta}}(\tau_i, \boldsymbol{x}_i)}{\partial \boldsymbol{\theta} } \frac{ \partial F_{\boldsymbol{\theta}}(\tau_i, \boldsymbol{x}_i)}{\partial \boldsymbol{\theta}^T}.
	\]
	\label{prop:libro}
\end{proposition}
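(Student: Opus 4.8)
The plan is to treat $\hat{\boldsymbol{\theta}}_\gamma$ as an M-estimator defined by its estimating equations and apply the classical scheme ``mean-value expansion of the estimating function plus a CLT for the score'', exploiting that the $I$ strata are mutually independent and that, under $\boldsymbol{\theta}_0$, $n_i$ is a sum of $K_i$ i.i.d.\ Bernoulli$(\pi_{i1}(\boldsymbol{\theta}_0))$ variables. First I would put the estimating equations in a convenient form: starting from (\ref{eq:fobj})--(\ref{eq:medida_f}) and using $\pi_{i1}(\boldsymbol{\theta})+\pi_{i2}(\boldsymbol{\theta})=1$ and $\hat p_{i1}+\hat p_{i2}=1$ (hence $\partial\pi_{i2}/\partial\boldsymbol{\theta}=-\partial\pi_{i1}/\partial\boldsymbol{\theta}$), differentiation collapses the two-term sums and yields
\[
\frac{\partial}{\partial\boldsymbol{\theta}}WD_\gamma^*(\boldsymbol{\theta})=(\gamma+1)\sum_{i=1}^I\frac{K_i}{K}\bigl(F_{\boldsymbol{\theta}}(\tau_i,\boldsymbol{x}_i)-\hat p_{i1}\bigr)\Bigl(F_{\boldsymbol{\theta}}(\tau_i,\boldsymbol{x}_i)^{\gamma-1}+R_{\boldsymbol{\theta}}(\tau_i,\boldsymbol{x}_i)^{\gamma-1}\Bigr)\frac{\partial F_{\boldsymbol{\theta}}(\tau_i,\boldsymbol{x}_i)}{\partial\boldsymbol{\theta}}=:(\gamma+1)\,\boldsymbol{\Psi}_K(\boldsymbol{\theta}),
\]
which is exactly (\ref{eq:ecuacionesloglin}) after inserting the log-linear link and $\hat p_{i1}=n_i/K_i$. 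Since $\mathbb{E}_{\boldsymbol{\theta}_0}[\hat p_{i1}]=F_{\boldsymbol{\theta}_0}(\tau_i,\boldsymbol{x}_i)$ we get $\mathbb{E}_{\boldsymbol{\theta}_0}[\boldsymbol{\Psi}_K(\boldsymbol{\theta}_0)]=\boldsymbol{0}_{2(J+1)}$, so $\boldsymbol{\Psi}_K$ is an unbiased estimating function at the truth.

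Next I would assemble the two ingredients of the sandwich. For the score: the independence across strata together with $\sqrt{K_i}\bigl(\hat p_{i1}-F_{\boldsymbol{\theta}_0}(\tau_i,\boldsymbol{x}_i)\bigr)\xrightarrow{\mathcal L}\mathcal N\!\bigl(0,F_{\boldsymbol{\theta}_0}(\tau_i,\boldsymbol{x}_i)R_{\boldsymbol{\theta}_0}(\tau_i,\boldsymbol{x}_i)\bigr)$ gives, after writing $\sqrt K\,\boldsymbol{\Psi}_K(\boldsymbol{\theta}_0)=\sum_i (K_i/\sqrt K)(F_{\boldsymbol{\theta}_0}-\hat p_{i1})(F_{\boldsymbol{\theta}_0}^{\gamma-1}+R_{\boldsymbol{\theta}_0}^{\gamma-1})\,\partial F_{\boldsymbol{\theta}_0}/\partial\boldsymbol{\theta}$ and computing the variance (the factor $K_i^2/K$ cancels against $\mathrm{Var}(\hat p_{i1})=F_{\boldsymbol{\theta}_0}R_{\boldsymbol{\theta}_0}/K_i$), that $\sqrt K\,\boldsymbol{\Psi}_K(\boldsymbol{\theta}_0)\xrightarrow{\mathcal L}\mathcal N\bigl(\boldsymbol{0}_{2(J+1)},\boldsymbol{K}_\gamma(\boldsymbol{\theta}_0)\bigr)$. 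For the Jacobian: differentiating $\boldsymbol{\Psi}_K$ once more, every term arising from differentiating the bracket $(F^{\gamma-1}+R^{\gamma-1})$ or the column $\partial F/\partial\boldsymbol{\theta}$ carries a factor $(F_{\boldsymbol{\theta}_0}-\hat p_{i1})\xrightarrow{P}0$, whereas the term from differentiating $(F_{\boldsymbol{\theta}}-\hat p_{i1})$ produces $\partial F_{\boldsymbol{\theta}}/\partial\boldsymbol{\theta}^T$; hence $\partial\boldsymbol{\Psi}_K(\boldsymbol{\theta})/\partial\boldsymbol{\theta}^T\big|_{\boldsymbol{\theta}_0}\xrightarrow{P}\boldsymbol{J}_\gamma(\boldsymbol{\theta}_0)$, which I assume to be positive definite.

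Finally I would combine them in the usual way: under the regularity conditions (identifiability, smoothness of $\boldsymbol{\theta}\mapsto F_{\boldsymbol{\theta}}(\tau_i,\boldsymbol{x}_i)$ on a neighbourhood of the interior point $\boldsymbol{\theta}_0$, $K_i/K\to\rho_i>0$, and $\boldsymbol{J}_\gamma(\boldsymbol{\theta}_0)$ nonsingular) there exists a consistent sequence of roots $\hat{\boldsymbol{\theta}}_\gamma$ of $\boldsymbol{\Psi}_K(\boldsymbol{\theta})=\boldsymbol{0}_{2(J+1)}$, and the Taylor expansion $\boldsymbol{0}_{2(J+1)}=\boldsymbol{\Psi}_K(\hat{\boldsymbol{\theta}}_\gamma)=\boldsymbol{\Psi}_K(\boldsymbol{\theta}_0)+\bigl[\partial\boldsymbol{\Psi}_K/\partial\boldsymbol{\theta}^T\big|_{\bar{\boldsymbol{\theta}}}\bigr](\hat{\boldsymbol{\theta}}_\gamma-\boldsymbol{\theta}_0)$, with $\bar{\boldsymbol{\theta}}$ between $\hat{\boldsymbol{\theta}}_\gamma$ and $\boldsymbol{\theta}_0$, yields
\[
\sqrt K(\hat{\boldsymbol{\theta}}_\gamma-\boldsymbol{\theta}_0)=-\Bigl[\tfrac{\partial\boldsymbol{\Psi}_K}{\partial\boldsymbol{\theta}^T}\Big|_{\bar{\boldsymbol{\theta}}}\Bigr]^{-1}\sqrt K\,\boldsymbol{\Psi}_K(\boldsymbol{\theta}_0)\xrightarrow{\mathcal L}\mathcal N\bigl(\boldsymbol{0}_{2(J+1)},\boldsymbol{J}_\gamma^{-1}(\boldsymbol{\theta}_0)\boldsymbol{K}_\gamma(\boldsymbol{\theta}_0)\boldsymbol{J}_\gamma^{-1}(\boldsymbol{\theta}_0)\bigr),
\]
using Slutsky's theorem and the symmetry of $\boldsymbol{J}_\gamma$. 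Equivalently, one may observe that this data structure is a collection of independent, non-homogeneous grouped Bernoulli observations and simply invoke the general asymptotic theory of the minimum density power divergence estimator for such models (Basu et al., 1998; Ghosh and Basu, 2013), of which the computation above is the specialization. I expect the real difficulty to lie in what is normally absorbed into ``regularity conditions'': proving the existence of a consistent root and showing that the second-order remainder in the expansion is $o_P(\|\hat{\boldsymbol{\theta}}_\gamma-\boldsymbol{\theta}_0\|)$ uniformly on a neighbourhood of $\boldsymbol{\theta}_0$, which needs a boundedness/equicontinuity argument on the first and second $\boldsymbol{\theta}$-derivatives of $F_{\boldsymbol{\theta}}(\tau_i,\boldsymbol{x}_i)$ over a compact neighbourhood together with the (finite-$I$) convergence $\hat p_{i1}\xrightarrow{P}F_{\boldsymbol{\theta}_0}(\tau_i,\boldsymbol{x}_i)$; everything else is bookkeeping.
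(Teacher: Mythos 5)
Your proposal is correct, and its main route differs from the paper's. The paper does not re-derive asymptotic normality at all: its proof of Proposition \ref{prop:libro} invokes the general WMDPDE result of Balakrishnan et al.\ (2022), which states the sandwich covariance in terms of the quantities $\boldsymbol{u}_{ij}(\boldsymbol{\theta})=\partial\log\pi_{ij}(\boldsymbol{\theta})/\partial\boldsymbol{\theta}$, $\pi_{ij}(\boldsymbol{\theta})$ and $\xi_{i,\gamma}(\boldsymbol{\theta})$, and the whole proof consists of the two-cell (Bernoulli) algebra $\boldsymbol{u}_{i1}=F^{-1}\,\partial F/\partial\boldsymbol{\theta}$, $\boldsymbol{u}_{i2}=-R^{-1}\,\partial F/\partial\boldsymbol{\theta}$, which collapses the cited formulas to the stated $\boldsymbol{J}_\gamma$ and $\boldsymbol{K}_\gamma$. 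You instead prove the result from first principles as an M-estimation/sandwich argument: you collapse the gradient of $WD^*_\gamma$ to the single unbiased estimating function $\boldsymbol{\Psi}_K(\boldsymbol{\theta})$ (your computation agrees with the paper's Theorem \ref{th:ecuaciones} and your variance algebra reproduces exactly the paper's $\boldsymbol{K}_\gamma$, since $F^{2\gamma-1}R+FR^{2\gamma-1}+2F^{\gamma}R^{\gamma}=FR(F^{\gamma-1}+R^{\gamma-1})^2$), apply the per-stratum binomial CLT with independence across the finitely many strata, identify the probability limit of the Jacobian with $\boldsymbol{J}_\gamma(\boldsymbol{\theta}_0)$, and finish with the mean-value expansion and Slutsky. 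What each approach buys: the paper's proof is short and leans entirely on the external reference (existence of a consistent root, negligibility of remainders, and the asymptotic regime $K_i/K\to\rho_i$ are all absorbed there), whereas your derivation is self-contained, makes the needed regularity assumptions explicit, and produces the two-cell form of $\boldsymbol{J}_\gamma$ and $\boldsymbol{K}_\gamma$ directly without the intermediate $\boldsymbol{u}_{ij}$, $\xi_{i,\gamma}$ bookkeeping; your closing remark that one could instead invoke the general MDPDE theory for independent non-homogeneous observations is precisely the paper's route. The only caveat, which you yourself flag, is that the existence of a consistent root and the uniform control of the remainder are asserted rather than proved, but this is exactly the content of the ``regularity conditions'' the paper also leaves implicit.
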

\begin{proof}(See Appendix: Section \ref{suplmaterial_2})
\end{proof}
The following theorem presents explicit expressions of matrices \(\boldsymbol{J}_{\gamma}(\boldsymbol{\theta}) \) and \(\boldsymbol{K}_{\gamma}(\boldsymbol{\theta}) \).
\begin{theorem}
	Let \(\boldsymbol{\theta}_0 = (a^0_j, b^0_j, j = 1, \ldots, J)\) be the true value of the parameter \(\boldsymbol{\theta}\). The asymptotic distribution of the WMDPDE, \(\hat{\boldsymbol{\theta}}_\gamma = (\hat{a}_j, \hat{b}_j, j = 0, \ldots, J)\), assuming the lifetimes follow a log-logistic distribution, is given by the following expression:
	\[ \sqrt{K} (\hat{\boldsymbol{\theta}}_\gamma - \boldsymbol{\theta}_0) \underset{K \rightarrow \infty}{\overset{\mathcal{L}}{\to}} \mathcal{N}\biggr( \boldsymbol{0}_{2(J+1)}, \boldsymbol{J}_\gamma^{-1}(\boldsymbol{\theta}_0) \boldsymbol{K}_\gamma (\boldsymbol{\theta}_0) \boldsymbol{J}_\gamma^{-1}(\boldsymbol{\theta}_0)\biggr),
	\]    
	being
	\[
	\boldsymbol{J}_\gamma (\boldsymbol{\theta}) = \sum_{i=1}^{I} \frac{K_i}{K} \boldsymbol{M}_i \left( R_{\alpha_i, \beta_i} (\tau_i, x_i) F_{\alpha_i, \beta_i} (\tau_i, x_i) \right)^2 \left( F_{\alpha_i, \beta_i} (\tau_i, x_i)^{\gamma - 1} + R_{\alpha_i, \beta_i} (\tau_i, x_i)^{\gamma - 1} \right) 
	\]
	and 
	\[
	\boldsymbol{K}_\gamma(\boldsymbol{\theta}) = \sum_{i=1}^{I} \frac{K_i}{K} \left( F_{\alpha_i, \beta_i}(\tau_i, \boldsymbol{x_i})^{\gamma-1} + R_{\alpha_i, \beta_i}(\tau_i, \boldsymbol{x_i})^{\gamma-1} \right)^2 F_{\alpha_i, \beta_i}(\tau_i, \boldsymbol{x_i})^3 \cdot R_{\alpha_i, \beta_i}(\tau_i, \boldsymbol{x_i})^3  \cdot \boldsymbol{M}_i
	\]
	where
	\allowdisplaybreaks
	\[
	\boldsymbol{M}_i = \begin{pmatrix}
		\beta_i^2 \boldsymbol{x_i} \boldsymbol{x_i}^T & -\log \frac{\tau_i}{\alpha_i} \beta_i^2 \boldsymbol{x_i} \boldsymbol{x_i}^T \\
		-\log \frac{\tau_i}{\alpha_i} \beta_i^2 \boldsymbol{x_i} \boldsymbol{x_i}^T & \left( \log \frac{\tau_i}{\alpha_i} \beta_i \right)^2 \boldsymbol{x_i} \boldsymbol{x_i}^T 
	\end{pmatrix} \text{ .}
	\]
	\label{th:asim_distr}
\end{theorem}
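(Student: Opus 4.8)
The plan is to read off the explicit forms of $\boldsymbol{J}_\gamma(\boldsymbol{\theta})$ and $\boldsymbol{K}_\gamma(\boldsymbol{\theta})$ from the general expressions in Proposition \ref{prop:libro} by substituting the gradient $\partial F_{\boldsymbol{\theta}}(\tau_i,\boldsymbol{x}_i)/\partial\boldsymbol{\theta}$ computed for the log-logistic model with the assumed log-linear link. Since $\boldsymbol{\theta}=(a_0,\dots,a_J,b_0,\dots,b_J)^T$, this gradient is a column vector of dimension $2(J+1)$ that splits into an $a$-block and a $b$-block. Because $\alpha_i=\exp(\sum_j a_j x_{ij})$ depends only on the $a_j$'s and $\beta_i=\exp(\sum_j b_j x_{ij})$ only on the $b_j$'s, the chain rule gives $\partial\alpha_i/\partial a_j=\alpha_i x_{ij}$ and $\partial\beta_i/\partial b_j=\beta_i x_{ij}$, with the cross derivatives $\partial\alpha_i/\partial b_j$ and $\partial\beta_i/\partial a_j$ vanishing.

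Next I would differentiate $F_{\alpha_i,\beta_i}(\tau_i,\boldsymbol{x}_i)=\tau_i^{\beta_i}/(\tau_i^{\beta_i}+\alpha_i^{\beta_i})$ directly with respect to $\alpha_i$ and $\beta_i$. A short computation gives $\partial F/\partial\alpha_i=-\beta_i\alpha_i^{\beta_i-1}\tau_i^{\beta_i}/(\tau_i^{\beta_i}+\alpha_i^{\beta_i})^2=-(\beta_i/\alpha_i)F_{\alpha_i,\beta_i}R_{\alpha_i,\beta_i}$ and, after cancelling the $\tau_i^{\beta_i}\log\tau_i$ terms, $\partial F/\partial\beta_i=\tau_i^{\beta_i}\alpha_i^{\beta_i}\log(\tau_i/\alpha_i)/(\tau_i^{\beta_i}+\alpha_i^{\beta_i})^2=\log(\tau_i/\alpha_i)F_{\alpha_i,\beta_i}R_{\alpha_i,\beta_i}$. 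Combining with the link derivatives yields $\partial F_i/\partial a_j=-\beta_i x_{ij}F_{\alpha_i,\beta_i}R_{\alpha_i,\beta_i}$ and $\partial F_i/\partial b_j=\log(\tau_i/\alpha_i)\beta_i x_{ij}F_{\alpha_i,\beta_i}R_{\alpha_i,\beta_i}$, so that
\[ \frac{\partial F_{\boldsymbol{\theta}}(\tau_i,\boldsymbol{x}_i)}{\partial\boldsymbol{\theta}}=F_{\alpha_i,\beta_i}(\tau_i,\boldsymbol{x}_i)\,R_{\alpha_i,\beta_i}(\tau_i,\boldsymbol{x}_i)\,\Bigl(-\beta_i\boldsymbol{x}_i^T,\ \log\tfrac{\tau_i}{\alpha_i}\beta_i\boldsymbol{x}_i^T\Bigr)^T, \]
which also recovers the vector appearing in Theorem \ref{th:ecuaciones}.

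Then I would form the rank-one matrix $\dfrac{\partial F_i}{\partial\boldsymbol{\theta}}\dfrac{\partial F_i}{\partial\boldsymbol{\theta}^T}=\bigl(F_{\alpha_i,\beta_i}R_{\alpha_i,\beta_i}\bigr)^2\boldsymbol{M}_i$, where $\boldsymbol{M}_i$ is exactly the $2\times 2$ block matrix with blocks $\beta_i^2\boldsymbol{x}_i\boldsymbol{x}_i^T$, $-\log(\tau_i/\alpha_i)\beta_i^2\boldsymbol{x}_i\boldsymbol{x}_i^T$ and $(\log(\tau_i/\alpha_i)\beta_i)^2\boldsymbol{x}_i\boldsymbol{x}_i^T$ displayed in the statement. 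Substituting this into the expression for $\boldsymbol{J}_\gamma$ in Proposition \ref{prop:libro} reproduces the first claimed formula; substituting it into $\boldsymbol{K}_\gamma$, the extra scalar factor $F_{\alpha_i,\beta_i}R_{\alpha_i,\beta_i}$ present there combines with $\bigl(F_{\alpha_i,\beta_i}R_{\alpha_i,\beta_i}\bigr)^2$ to give $F_{\alpha_i,\beta_i}^3R_{\alpha_i,\beta_i}^3$, yielding the second formula. The asymptotic normality statement with sandwich covariance $\boldsymbol{J}_\gamma^{-1}\boldsymbol{K}_\gamma\boldsymbol{J}_\gamma^{-1}$ is then inherited verbatim from Proposition \ref{prop:libro}, under its regularity conditions.

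The argument is essentially routine bookkeeping; the one step that needs care is the derivative with respect to $\beta_i$, where the naive quotient-rule expansion produces one term with denominator $(\tau_i^{\beta_i}+\alpha_i^{\beta_i})$ and another with denominator $(\tau_i^{\beta_i}+\alpha_i^{\beta_i})^2$, and one must verify that, over the common denominator, the $\tau_i^{\beta_i}\log\tau_i$ contributions cancel so that only $\tau_i^{\beta_i}\alpha_i^{\beta_i}\log(\tau_i/\alpha_i)$ survives. This cancellation is what produces the clean factorization $\partial F/\partial\beta_i=\log(\tau_i/\alpha_i)F_{\alpha_i,\beta_i}R_{\alpha_i,\beta_i}$ and, in turn, the symmetric block structure of $\boldsymbol{M}_i$; everything else is direct substitution.
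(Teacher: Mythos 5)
Your proposal is correct and follows essentially the same route as the paper's own proof: it invokes Proposition \ref{prop:libro}, computes $\partial F_{\alpha_i,\beta_i}/\partial a_j$ and $\partial F_{\alpha_i,\beta_i}/\partial b_j$ via the chain rule through the log-linear link (exactly equations (\ref{eq:derFaj}) and (\ref{eq:derFbj})), forms the rank-one outer product $\bigl(F_{\alpha_i,\beta_i}R_{\alpha_i,\beta_i}\bigr)^2\boldsymbol{M}_i$, and substitutes into $\boldsymbol{J}_\gamma$ and $\boldsymbol{K}_\gamma$, correctly accounting for the extra $F_{\alpha_i,\beta_i}R_{\alpha_i,\beta_i}$ factor that yields $F^3R^3$ in $\boldsymbol{K}_\gamma$. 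The cancellation in the $\beta_i$-derivative that you flag is indeed the only delicate step, and you handle it as the paper does.
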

\begin{proof}(See Appendix: Section \ref{sec:th:asim_distr})
	
\end{proof}
\begin{corollary}
	The asymptotic distribution of the MLE of \(\boldsymbol{\theta}\), \(\hat{\boldsymbol{\theta}}_{\gamma=0}\), is given by:
	\[
	\sqrt{K}(\hat{\boldsymbol{\theta}}_{\gamma=0} - \boldsymbol{\theta}_0) \underset{K \rightarrow \infty}{\overset{\mathcal{L}}{\to}}
	N\biggr(\boldsymbol{0}_{2(J+1)}, \boldsymbol{I_F}(\boldsymbol{\theta}_0)^{-1}\biggr),
	\]
	where
	\begin{align*}
		\boldsymbol{I_F}(\boldsymbol{\theta})  &=  \sum_{i=1}^{I} K_i \boldsymbol{M}_i \left( R_{\alpha_i, \beta_i} (\tau_i, \boldsymbol{x}_i) F_{\alpha_i, \beta_i} (\tau_i, \boldsymbol{x}_i) \right)^2 \left( F_{\alpha_i, \beta_i} (\tau_i, \boldsymbol{x}_i)^{-1} + R_{\alpha_i, \beta_i} (\tau_i, \boldsymbol{x}_i)^{-1} \right)
	\end{align*}
	is the Fisher information matrix, being
	\[
	\boldsymbol{M}_i = \begin{pmatrix}
		\beta_i^2 \boldsymbol{x}_i \boldsymbol{x}_i^T & -\log \frac{\tau_i}{\alpha_i} \beta_i^2 \boldsymbol{x}_i \boldsymbol{x}_i^T \\
		-\log \frac{\tau_i}{\alpha_i} \beta_i^2 \boldsymbol{x}_i \boldsymbol{x}_i^T & \left( \log \frac{\tau_i}{\alpha_i} \beta_i \right)^2 \boldsymbol{x}_i \boldsymbol{x}_i^T
	\end{pmatrix}.
	\]
	Moreover, it can be observed that
	\[
	\boldsymbol{J}_{\gamma=0}(\boldsymbol{\theta}) = \boldsymbol{K}_{\gamma=0}(\boldsymbol{\theta}) = \boldsymbol{I_F}(\boldsymbol{\theta}).
	\]
	\label{prop:emvdistr}
\end{corollary}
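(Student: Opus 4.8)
The plan is to read the statement off Theorem~\ref{th:asim_distr} by specializing to $\gamma=0$. The WMDPDE with $\gamma=0$ coincides with the MLE: this is the equivalence recorded at the end of Section~\ref{sec:descripcion}, or, equivalently, the fact that $D_\gamma$ in (\ref{eq:dpd_divergence}) tends to the Kullback--Leibler divergence as $\gamma\to0$. Hence the asymptotic normality and the sandwich covariance of Proposition~\ref{prop:libro} and Theorem~\ref{th:asim_distr} already apply to $\hat{\boldsymbol{\theta}}_{\gamma=0}$, and the only work is to simplify $\boldsymbol{J}_{\gamma=0}$, $\boldsymbol{K}_{\gamma=0}$ and to show that the sandwich collapses to a single inverse matrix.

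First I would set $\gamma=0$ in the formulas of Theorem~\ref{th:asim_distr}. Writing $F_i=F_{\alpha_i,\beta_i}(\tau_i,\boldsymbol{x}_i)$ and $R_i=R_{\alpha_i,\beta_i}(\tau_i,\boldsymbol{x}_i)$ for brevity, the Bernoulli identity $F_i+R_i=1$ gives
\[
F_i^{-1}+R_i^{-1}=\frac{F_i+R_i}{F_iR_i}=\frac{1}{F_iR_i},\qquad (F_i^{-1}+R_i^{-1})^2=\frac{1}{F_i^2R_i^2}.
\]
Substituting the first identity into $\boldsymbol{J}_\gamma$ at $\gamma=0$ turns each summand into $\frac{K_i}{K}\boldsymbol{M}_i(R_iF_i)^2\frac{1}{F_iR_i}=\frac{K_i}{K}F_iR_i\boldsymbol{M}_i$; substituting the second identity together with the factor $F_i^3R_i^3$ into $\boldsymbol{K}_\gamma$ at $\gamma=0$ turns each summand into $\frac{K_i}{K}\frac{1}{F_i^2R_i^2}F_i^3R_i^3\boldsymbol{M}_i=\frac{K_i}{K}F_iR_i\boldsymbol{M}_i$. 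Therefore $\boldsymbol{J}_{\gamma=0}(\boldsymbol{\theta})=\boldsymbol{K}_{\gamma=0}(\boldsymbol{\theta})$, the common value being $\sum_{i=1}^{I}\frac{K_i}{K}F_iR_i\boldsymbol{M}_i$, which is the Fisher information matrix $\boldsymbol{I_F}(\boldsymbol{\theta})$ of the statement. Feeding $\boldsymbol{J}_{\gamma=0}=\boldsymbol{K}_{\gamma=0}=\boldsymbol{I_F}$ into the sandwich of Proposition~\ref{prop:libro} yields $\boldsymbol{J}_{\gamma=0}^{-1}\boldsymbol{K}_{\gamma=0}\boldsymbol{J}_{\gamma=0}^{-1}=\boldsymbol{I_F}^{-1}$, and the limit law $\sqrt{K}(\hat{\boldsymbol{\theta}}_{\gamma=0}-\boldsymbol{\theta}_0)\underset{K\rightarrow\infty}{\overset{\mathcal{L}}{\to}}\mathcal{N}(\boldsymbol{0}_{2(J+1)},\boldsymbol{I_F}(\boldsymbol{\theta}_0)^{-1})$ follows, which is the classical efficiency statement for the MLE.

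As an independent check I would also derive $\boldsymbol{I_F}$ directly from the grouped Bernoulli likelihood (\ref{eq:verosim}): the contribution of stratum $i$ to the information is $K_i\,\pi_{i1}(\boldsymbol{\theta})^{-1}\pi_{i2}(\boldsymbol{\theta})^{-1}\frac{\partial F_{\boldsymbol{\theta}}(\tau_i,\boldsymbol{x}_i)}{\partial\boldsymbol{\theta}}\frac{\partial F_{\boldsymbol{\theta}}(\tau_i,\boldsymbol{x}_i)}{\partial\boldsymbol{\theta}^T}$, and differentiating $F_i=\tau_i^{\beta_i}/(\tau_i^{\beta_i}+\alpha_i^{\beta_i})$ through the log-linear links gives $\frac{\partial F_{\boldsymbol{\theta}}(\tau_i,\boldsymbol{x}_i)}{\partial\boldsymbol{\theta}}=F_iR_i\bigl(-\beta_i\boldsymbol{x}_i,\ \log\tfrac{\tau_i}{\alpha_i}\beta_i\boldsymbol{x}_i\bigr)^T$ — the same vector appearing in (\ref{eq:ecuacionesloglin}) — so that $\frac{\partial F_{\boldsymbol{\theta}}(\tau_i,\boldsymbol{x}_i)}{\partial\boldsymbol{\theta}}\frac{\partial F_{\boldsymbol{\theta}}(\tau_i,\boldsymbol{x}_i)}{\partial\boldsymbol{\theta}^T}=(F_iR_i)^2\boldsymbol{M}_i$ and the information contribution is $K_i F_iR_i\boldsymbol{M}_i$, reproducing the announced expression.

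There is no substantive obstacle here: the statement is a corollary and everything reduces to algebra already made available by the preceding results. The only points to watch are the bookkeeping of the powers of $F_i$ and $R_i$ in the reduction of $\boldsymbol{K}_{\gamma=0}$ (where $F_i^{-2}R_i^{-2}$ must cancel against $F_i^3R_i^3$), and keeping the normalization consistent with Theorem~\ref{th:asim_distr} so that the weights entering $\boldsymbol{I_F}$ and the $\sqrt{K}$ scaling in the limit law are displayed coherently.
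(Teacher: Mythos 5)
Your proposal is correct and follows exactly the route the paper intends for this corollary (which it states without a separate proof): set $\gamma=0$ in Theorem~\ref{th:asim_distr}, use $F_i+R_i=1$ to reduce both $\boldsymbol{J}_{\gamma=0}$ and $\boldsymbol{K}_{\gamma=0}$ to $\sum_i \frac{K_i}{K}F_iR_i\boldsymbol{M}_i$, and collapse the sandwich to $\boldsymbol{I_F}^{-1}$; the direct check via the grouped Bernoulli likelihood is a nice, consistent confirmation. Your closing remark about normalization is well taken: the corollary's displayed $\boldsymbol{I_F}$ carries weights $K_i$ rather than $K_i/K$, and your version with $K_i/K$ is the one consistent with Theorem~\ref{th:asim_distr} and with the $\sqrt{K}$ scaling in the limit law.
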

\section{Robustness Analysis}
Now we study the robustness of the WMDPDE, \( \hat{\boldsymbol{\theta}}_\gamma\), by using the influence function (IF) of the WMDPDE. The concept of IF of an estimator was introduced by Hampel [19] and used from then onwards to evaluate the robustness of an estimator.
The approach of Balakrishnan et al. (2021a) will be followed. 
Given a general distribution function \(G\), a statistical functional \(\boldsymbol{T}(G)\) is any real-valued function of \(G\), denoted as \( \boldsymbol{\theta} = \boldsymbol{T}(G).
\)
For any estimator defined in terms of a statistical functional \(\boldsymbol{T}(G)\) of the true distribution \(G\), its influence function (IF) is defined as:
\[
\text{IF}(t_0, \boldsymbol{T}, G) = \lim_{\epsilon \to 0} \frac{\boldsymbol{T}(G_\epsilon) - \boldsymbol{T}(G)}{\epsilon} = \left. \frac{\partial \boldsymbol{T}(G_\epsilon)}{\partial \epsilon} \right|_{\epsilon=0},
\]
where \(G\) is the true distribution, \(G_\epsilon\) is the contaminated distribution function, and a contamination point at \(t_0\) is considered. 

Let us consider the \(i\)-th testing condition, in which \(K_i\) one-shot devices are observed, and \(n_i\) is the number of failures observed. We denote by \(G_i\) the true cumulative distribution function of the Bernoulli random variable that generates the \(n_i\) failures. The probability mass function associated to \(G_i\) is denoted by \(\boldsymbol{p}_i\). On the other hand, let $F$ be the distribution function of the Bernoulli random variable with a probability of success equal to \(F_{\boldsymbol{\theta}}(\tau_i, \boldsymbol{x}_i)\) and the corresponding probability mass function is denoted by \(\boldsymbol{\pi}_{i}(\boldsymbol{\theta})\). The following vector notation is introduced, where $\otimes$ is the Kronecker product:
\[
\boldsymbol{G} = (G_1 \otimes \mathbf{1}_{K_1}, \ldots, G_I \otimes \mathbf{1}_{K_I})^T \quad \text{y} \quad \boldsymbol{F}_{\boldsymbol{\theta}} = (F_{1;\boldsymbol{\theta}} \otimes \mathbf{1}_{K_1}, \ldots, F_{I;\boldsymbol{\theta}} \otimes \mathbf{1}_{K_I})^T.
\]
The contaminated distribution function in the direction of the point $\boldsymbol{t}  = (t_{11}, \dots, t_{1K_1}, \dots, t_{I1}, \dots, t_{IK_I})$ of a random variable with distribution function $G$ is given by \(
G_\epsilon = (1 - \epsilon) G + \epsilon \Delta_{\boldsymbol{t}}
\),   where \(\epsilon\) is the contamination proportion and \(\Delta_{\boldsymbol{t}}\) denotes the cumulative distribution function of a degenerate variable at the contamination point \(\boldsymbol{t}\).

To derive the influence function of the WMDPDE, it is necessary to define the statistical functional \(\boldsymbol{T}_\gamma(G)\) associated with the estimator as the minimizer of the weighted sum of the DPDs between the true and model probability mass functions. The minimum density power functional is the minizer of:
\begin{equation}
	H_\gamma(\boldsymbol{\theta}) = \sum_{i=1}^I \frac{K_i}{K} \left\{ \sum_{y \in \{0,1\}} \left[ \boldsymbol{\pi}_i^{\gamma+1}(y, \boldsymbol{\theta}) - \frac{\gamma + 1}{\gamma} \boldsymbol{\pi}_i^\gamma(y, \boldsymbol{\theta}) \boldsymbol{p}_i(y) \right] \right\},
	\label{Tgamma}
\end{equation}
where \( \boldsymbol{p}_i(y) \) is the probability mass function associated with \( G_i \), and
\[
\boldsymbol{\pi}_i(y, \boldsymbol{\theta}) = y F_{\boldsymbol{\theta}}(\tau_i, \boldsymbol{x}_i) + (1 - y) R_{\boldsymbol{\theta}}(\tau_i, \boldsymbol{x}_i), \quad y \in \{0, 1\}.
\]
Choosing \( \boldsymbol{p}_i(y) \equiv \boldsymbol{\pi}_i(y, \boldsymbol{\theta}) \), the expression \eqref{Tgamma} is minimized at \( \boldsymbol{\theta} = \boldsymbol{\theta}_0 \), which implies that the WMDPDE functional \( \boldsymbol{T}_\gamma(G) \) is Fisher-consistent. 
Furthermore, we have
\begin{equation}
	\frac{\partial H_\gamma(\boldsymbol{\theta})}{\partial \boldsymbol{\theta}} = \sum_{i=1}^I \frac{K_i}{K} \left\{ \sum_{y \in \{0,1\}} \left[ \boldsymbol{\pi}_i^\gamma(y, \boldsymbol{\theta}) \frac{\partial \boldsymbol{\pi}_i(y, \boldsymbol{\theta})}{\partial \boldsymbol{\theta}} - \boldsymbol{\pi}_i^{\gamma-1}(y, \boldsymbol{\theta}) \frac{\partial \boldsymbol{\pi}_i(y, \boldsymbol{\theta})}{\partial \boldsymbol{\theta}} \boldsymbol{p}_i(y) \right] \right\} = \boldsymbol{0}_{2(J+1)}.
	\label{eq:ecuacionesIF}
\end{equation}

To compute the influence function (IF) of the WMDPDE at \(F_{\boldsymbol{\theta}}\) with respect to the \(k\)-th element of the \(i_0\)-th group of observations, we replace \(\boldsymbol{\theta}\) in expression \eqref{eq:ecuacionesIF} with
\[
\boldsymbol{\theta}^{i_0}_{\epsilon} = \boldsymbol{T}_\gamma(G_1 \otimes \boldsymbol{1}_{K_1}^T, \ldots, G_{i_0-1} \otimes \boldsymbol{1}_{K_{i_0-1}}^T, G_{i_0, \epsilon} \otimes \boldsymbol{1}_{K_{i_0}}^T, G_{i_0+1} \otimes \boldsymbol{1}_{K_{i_0+1}}^T, \ldots, G_I \otimes \boldsymbol{1}_{K_I}^T),
\]
where \(G_{i_0, \epsilon}\) is the distribution function associated with the probability mass function
\[
\boldsymbol{p}_{i_0, \epsilon, k}(y) = (1 - \epsilon) \boldsymbol{\pi}_{i_0}(y, \boldsymbol{\theta}_0) + \epsilon \delta_{t_{i_0, k}}(y),
\]
and
\(
\delta_{t_{i_0, k}}(y) = y \delta_{t_{i_0, k}}^{(1)} + (1 - y) \delta_{t_{i_0, k}}^{(2)},
\)
with \(\delta_{t_{i_0, k}}^{(1)}\) being the degenerate function at point \((i_0, k)\), \(\delta_{t_{i_0, k}}^{(2)} = (1 - \delta_{t_{i_0, k}}^{(1)})\), and
\[
\boldsymbol{p}_i(y) =
\begin{cases}
	\boldsymbol{\pi}_i(y, \boldsymbol{\theta}_0) & \text{if } i \ne i_0, \\
	\boldsymbol{p}_{i_0, \epsilon, k}(y) & \text{if } i = i_0.
\end{cases}
\]

In Balakrishnan et al. (2021a), the following two results were established.

\begin{theorem}	\label{prop:robustness1}
	Let $\boldsymbol{T}_\gamma(G)$ be the statistical functional corresponding to the WMDPDE estimator, obtained by minimizing expression (\ref{Tgamma}). Then, the IF for one-shot device testing with multiple stress factors with respect to the \(k\)-th observation of the \(i_0\)-th group is given by
	\begin{align}
		\text{IF}(t_{i_0,k}, \boldsymbol{T}_\gamma, F_{\boldsymbol{\theta}_0}) &= \boldsymbol{J}_\gamma^{-1}(\boldsymbol{\theta}_0) \frac{K_{i_0}}{K} \left( \frac{\partial F_{\boldsymbol{\theta}_0}(\tau_i, \boldsymbol{x}_i)}{\partial \boldsymbol{\theta}} \right)_{\boldsymbol{\theta}=\boldsymbol{\theta}_0}  \left( F_{\boldsymbol{\theta}_0}(\tau_i, \boldsymbol{x}_i)^{\gamma-1} + R_{\boldsymbol{\theta}_0}(\tau_i, \boldsymbol{x}_i)^{\gamma-1} \right) \left( F_{\boldsymbol{\theta}_0}(\tau_i, \boldsymbol{x}_i) - \Delta_{t_{i_0}}^{(1)} \right)
	\end{align}
	where \(\Delta_{t_{i_0}}^{(1)}\) is the cumulative distribution function of a degenerate variable at point \((t_{i_0})\).
\end{theorem}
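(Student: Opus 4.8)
The plan is to exploit the implicit definition of the functional $\boldsymbol{T}_\gamma$ through its estimating equation and to differentiate along the contamination path, exactly as the construction preceding the statement suggests. Write $\boldsymbol{\Psi}(\boldsymbol{\theta};\boldsymbol{p}_1,\dots,\boldsymbol{p}_I)$ for the left-hand side of (\ref{eq:ecuacionesIF}), viewed as a function of $\boldsymbol{\theta}$ and of the $I$ Bernoulli probability mass functions. By definition of $\boldsymbol{\theta}^{i_0}_\epsilon$ and of $\boldsymbol{p}_{i_0,\epsilon,k}$, the identity
\[
\boldsymbol{\Psi}\!\left(\boldsymbol{\theta}^{i_0}_\epsilon;\,\boldsymbol{\pi}_1(\cdot,\boldsymbol{\theta}_0),\dots,\boldsymbol{\pi}_{i_0-1}(\cdot,\boldsymbol{\theta}_0),\boldsymbol{p}_{i_0,\epsilon,k},\boldsymbol{\pi}_{i_0+1}(\cdot,\boldsymbol{\theta}_0),\dots,\boldsymbol{\pi}_I(\cdot,\boldsymbol{\theta}_0)\right)=\boldsymbol{0}_{2(J+1)}
\]
holds for all $\epsilon$ near $0$, with only the $i_0$-th slot perturbed. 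Differentiating in $\epsilon$, using Fisher consistency ($\boldsymbol{\theta}^{i_0}_0=\boldsymbol{\theta}_0$, already observed after (\ref{Tgamma})) to evaluate at $\epsilon=0$, and recalling that $\text{IF}(t_{i_0,k},\boldsymbol{T}_\gamma,F_{\boldsymbol{\theta}_0})=\frac{d}{d\epsilon}\boldsymbol{\theta}^{i_0}_\epsilon\big|_{\epsilon=0}$, the chain rule gives
\[
\left.\frac{\partial\boldsymbol{\Psi}}{\partial\boldsymbol{\theta}^T}\right|_{\boldsymbol{\theta}_0,\,\epsilon=0}\text{IF}(t_{i_0,k},\boldsymbol{T}_\gamma,F_{\boldsymbol{\theta}_0})=-\left.\frac{\partial\boldsymbol{\Psi}}{\partial\epsilon}\right|_{\boldsymbol{\theta}_0,\,\epsilon=0},
\]
so the influence function is obtained by inverting the $\boldsymbol{\theta}$-Jacobian; this is legitimate under the stated regularity conditions (differentiability of $\boldsymbol{\theta}\mapsto F_{\boldsymbol{\theta}}(\tau_i,\boldsymbol{x}_i)$, local existence and uniqueness of the minimizer for small $\epsilon$, and nonsingularity of that Jacobian at $\boldsymbol{\theta}_0$).

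It then remains to evaluate the two blocks. For the $\boldsymbol{\theta}$-Jacobian, the key observation is that each summand of (\ref{eq:ecuacionesIF}) can be written as $\boldsymbol{\pi}_i^{\gamma-1}(y,\boldsymbol{\theta})\,\frac{\partial\boldsymbol{\pi}_i(y,\boldsymbol{\theta})}{\partial\boldsymbol{\theta}}\,(\boldsymbol{\pi}_i(y,\boldsymbol{\theta})-\boldsymbol{p}_i(y))$, and at $\epsilon=0$ one has $\boldsymbol{p}_i(y)=\boldsymbol{\pi}_i(y,\boldsymbol{\theta}_0)$ for every $i$; hence the last factor vanishes at $\boldsymbol{\theta}=\boldsymbol{\theta}_0$, and when we differentiate in $\boldsymbol{\theta}$ only the term in which the derivative falls on that factor survives the evaluation. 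Using $\frac{\partial\boldsymbol{\pi}_i(y,\boldsymbol{\theta})}{\partial\boldsymbol{\theta}}=(2y-1)\frac{\partial F_{\boldsymbol{\theta}}(\tau_i,\boldsymbol{x}_i)}{\partial\boldsymbol{\theta}}$ and summing over $y\in\{0,1\}$ collapses the two-point sum into the scalar $F_{\boldsymbol{\theta}_0}(\tau_i,\boldsymbol{x}_i)^{\gamma-1}+R_{\boldsymbol{\theta}_0}(\tau_i,\boldsymbol{x}_i)^{\gamma-1}$, which identifies $\left.\partial\boldsymbol{\Psi}/\partial\boldsymbol{\theta}^T\right|_{\boldsymbol{\theta}_0,\epsilon=0}$ precisely with the matrix $\boldsymbol{J}_\gamma(\boldsymbol{\theta}_0)$ of Proposition \ref{prop:libro}. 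For the $\epsilon$-derivative, only the $i_0$-block carries $\epsilon$, through $\frac{\partial\boldsymbol{p}_{i_0,\epsilon,k}(y)}{\partial\epsilon}=\delta_{t_{i_0,k}}(y)-\boldsymbol{\pi}_{i_0}(y,\boldsymbol{\theta}_0)$; substituting, evaluating at $(\boldsymbol{\theta}_0,0)$, again using $\frac{\partial\boldsymbol{\pi}_{i_0}(y,\boldsymbol{\theta})}{\partial\boldsymbol{\theta}}=(2y-1)\frac{\partial F_{\boldsymbol{\theta}}(\tau_{i_0},\boldsymbol{x}_{i_0})}{\partial\boldsymbol{\theta}}$, and performing the same two-point sum over $y$, produces the factor $F_{\boldsymbol{\theta}_0}(\tau_{i_0},\boldsymbol{x}_{i_0})^{\gamma-1}+R_{\boldsymbol{\theta}_0}(\tau_{i_0},\boldsymbol{x}_{i_0})^{\gamma-1}$ together with the linear term $F_{\boldsymbol{\theta}_0}(\tau_{i_0},\boldsymbol{x}_{i_0})-\Delta^{(1)}_{t_{i_0}}$ and the gradient $\frac{\partial F_{\boldsymbol{\theta}_0}(\tau_{i_0},\boldsymbol{x}_{i_0})}{\partial\boldsymbol{\theta}}$, all carrying the weight $K_{i_0}/K$. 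Substituting these two expressions into the chain-rule relation above and premultiplying by $\boldsymbol{J}_\gamma^{-1}(\boldsymbol{\theta}_0)$ yields the stated formula.

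The routine parts are the scalar manipulations of Bernoulli probabilities and their powers; the step that needs care, and which I would single out as the crux, is the computation of $\left.\partial\boldsymbol{\Psi}/\partial\boldsymbol{\theta}^T\right|_{\boldsymbol{\theta}_0,\epsilon=0}$: one must recognize that the estimating function is, term by term, a product with a factor that vanishes at $\boldsymbol{\theta}_0$, so that a naive differentiation producing second-derivative (Hessian-type) terms in fact contributes nothing, and only the ``outer-product'' term $\big(F_{\boldsymbol{\theta}_0}^{\gamma-1}+R_{\boldsymbol{\theta}_0}^{\gamma-1}\big)\,\frac{\partial F_{\boldsymbol{\theta}_0}}{\partial\boldsymbol{\theta}}\,\frac{\partial F_{\boldsymbol{\theta}_0}}{\partial\boldsymbol{\theta}^T}$ remains; this is exactly what makes the sandwich ingredient $\boldsymbol{J}_\gamma(\boldsymbol{\theta}_0)$ of Proposition \ref{prop:libro} reappear here, and it is also where the signs and the bookkeeping of the $y=0$ versus $y=1$ contributions must be handled carefully. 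Keeping track of the interplay of the contamination proportion with the stratum weights $K_{i_0}/K$ is the only other mild subtlety.
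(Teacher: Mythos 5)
The paper does not actually prove this theorem: it is quoted from Balakrishnan et al.\ (2021a) ("the following two results were established"), so there is no in-paper argument to compare yours with. Your route --- treating $\boldsymbol{T}_\gamma$ as the Z-functional defined by (\ref{eq:ecuacionesIF}), differentiating along the contamination path, invoking Fisher consistency so that in the $\boldsymbol{\theta}$-Jacobian only the term where the derivative hits the vanishing factor $\boldsymbol{\pi}_i(y,\boldsymbol{\theta})-\boldsymbol{p}_i(y)$ survives, and recognizing that Jacobian as $\boldsymbol{J}_\gamma(\boldsymbol{\theta}_0)$ --- is the standard derivation of such influence functions, and your identification of the Jacobian and of the $\epsilon$-derivative as supported only on the $i_0$-block are both correct.

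The defect is in the final assembly: it does not deliver what you claim. With your own (correct) intermediate facts, $\left.\partial\boldsymbol{\Psi}/\partial\boldsymbol{\theta}^T\right|_{(\boldsymbol{\theta}_0,0)}=\boldsymbol{J}_\gamma(\boldsymbol{\theta}_0)$ and $\left.\partial\boldsymbol{\Psi}/\partial\epsilon\right|_{(\boldsymbol{\theta}_0,0)}=\frac{K_{i_0}}{K}\bigl(F_{\boldsymbol{\theta}_0}^{\gamma-1}+R_{\boldsymbol{\theta}_0}^{\gamma-1}\bigr)\frac{\partial F_{\boldsymbol{\theta}_0}}{\partial\boldsymbol{\theta}}\bigl(F_{\boldsymbol{\theta}_0}-\Delta^{(1)}_{t_{i_0}}\bigr)$ (the minus sign in front of $\boldsymbol{\pi}_i^{\gamma-1}\,\partial_{\boldsymbol{\theta}}\boldsymbol{\pi}_i\,\boldsymbol{p}_i$ converts $\partial_\epsilon\boldsymbol{p}_{i_0,\epsilon,k}=\delta_{t_{i_0,k}}-\boldsymbol{\pi}_{i_0}$ into $F-\Delta^{(1)}$, exactly as you say), the chain-rule identity $\mathrm{IF}=-\boldsymbol{J}_\gamma^{-1}(\boldsymbol{\theta}_0)\,\partial_\epsilon\boldsymbol{\Psi}$ yields $\mathrm{IF}=\boldsymbol{J}_\gamma^{-1}(\boldsymbol{\theta}_0)\frac{K_{i_0}}{K}\frac{\partial F_{\boldsymbol{\theta}_0}}{\partial\boldsymbol{\theta}}\bigl(F_{\boldsymbol{\theta}_0}^{\gamma-1}+R_{\boldsymbol{\theta}_0}^{\gamma-1}\bigr)\bigl(\Delta^{(1)}_{t_{i_0}}-F_{\boldsymbol{\theta}_0}\bigr)$, the \emph{negative} of the display in the theorem, not ``the stated formula.'' A one-line sanity check confirms which sign the honest derivation gives: for a single group, $\gamma=1$ and scalar $\boldsymbol{\theta}$ the estimating equation forces $F_{\boldsymbol{\theta}_\epsilon}=p_\epsilon(1)$, hence $\mathrm{IF}=(\Delta^{(1)}-F)/\partial_\theta F$ --- contamination by extra failures must pull the fitted failure probability up --- whereas the printed formula gives $(F-\Delta^{(1)})/\partial_\theta F$. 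So either you have dropped a sign and must restore it to match the statement, or (more plausibly) the printed statement carries a sign typo inherited from the citation and your computation, carried to its correct conclusion, establishes the result only up to that sign. As written, the closing sentence asserting agreement glosses over this mismatch; that is the one genuine gap in an otherwise correct and standard argument.
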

\begin{theorem}\label{prop:robustez2}
	Let $\boldsymbol{T}_\gamma(G)$ be the statistical functional corresponding to the WMDPDE estimator, obtained by minimizing expression (\ref{Tgamma}). Then, the IF for one-shot device testing with multiple stress factors with respect to all observations is given by:
	\begin{align}
		\text{IF}(t, \boldsymbol{T}_\gamma, F_{\boldsymbol{\theta}_0}) &= \boldsymbol{J}_\gamma^{-1}(\boldsymbol{\theta}_0) \sum_{i=1}^I \frac{K_{i}}{K} \left( \frac{\partial F_{\boldsymbol{\theta}}(\tau_i, \boldsymbol{x}_i)}{\partial \boldsymbol{\theta}} \right)_{\boldsymbol{\theta}=\boldsymbol{\theta}_0}  \left( F_{\boldsymbol{\theta}}(\tau_i, \boldsymbol{x}_i)^{\gamma-1} + R_{\boldsymbol{\theta}}(\tau_i, \boldsymbol{x}_i)^{\gamma-1} \right) \left( F_{\boldsymbol{\theta}_0}(\tau_i, \boldsymbol{x}_i) - \Delta_{t_{i}}^{(1)} \right)
	\end{align}
	where \(\Delta_{t_{i}}^{(1)} = \sum_{k=1}^{K_I} \Delta_{t_{i, k}}^{(1)}\).
\end{theorem}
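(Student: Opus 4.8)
The plan is to extend the implicit–differentiation argument underlying Theorem~\ref{prop:robustness1}, now perturbing all $I$ strata simultaneously instead of a single device in the $i_0$-th stratum. Because the influence function is a Gâteaux derivative of the functional $\boldsymbol{T}_\gamma$, it is linear in the contaminating direction; hence contaminating all $K$ devices produces an IF equal to the superposition of the per–stratum (and, within a stratum, per–device) contributions, which is precisely what the aggregated degenerate term $\Delta_{t_i}^{(1)}=\sum_{k=1}^{K_i}\Delta_{t_{i,k}}^{(1)}$ records. One can therefore either quote Theorem~\ref{prop:robustness1} and sum over $i_0$ and over the $K_{i_0}$ devices inside each stratum, or re-run the differentiation once with the joint contamination; I would present the latter, self-contained route.

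First I would set $\boldsymbol{\theta}_\epsilon=\boldsymbol{T}_\gamma(\boldsymbol{G}_\epsilon)$, where $\boldsymbol{G}_\epsilon$ replaces, in every stratum $i$, the Bernoulli mass function $\boldsymbol{\pi}_i(\cdot,\boldsymbol{\theta}_0)$ by the contaminated mass function $\boldsymbol{p}_{i,\epsilon}(y)=(1-\epsilon)\boldsymbol{\pi}_i(y,\boldsymbol{\theta}_0)+\epsilon\,\delta_{t_i}(y)$, with $\delta_{t_i}(y)=y\,\Delta_{t_i}^{(1)}+(1-y)\bigl(1-\Delta_{t_i}^{(1)}\bigr)$ aggregating the $K_i$ within–stratum degenerate contaminations. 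Substituting $\boldsymbol{\theta}=\boldsymbol{\theta}_\epsilon$ and $\boldsymbol{p}_i=\boldsymbol{p}_{i,\epsilon}$ into the estimating identity (\ref{eq:ecuacionesIF}), which then holds for all $\epsilon$ near $0$,
\[
\sum_{i=1}^{I}\frac{K_i}{K}\sum_{y\in\{0,1\}}\Bigl[\boldsymbol{\pi}_i^{\gamma}(y,\boldsymbol{\theta}_\epsilon)\frac{\partial\boldsymbol{\pi}_i(y,\boldsymbol{\theta}_\epsilon)}{\partial\boldsymbol{\theta}}-\boldsymbol{\pi}_i^{\gamma-1}(y,\boldsymbol{\theta}_\epsilon)\frac{\partial\boldsymbol{\pi}_i(y,\boldsymbol{\theta}_\epsilon)}{\partial\boldsymbol{\theta}}\,\boldsymbol{p}_{i,\epsilon}(y)\Bigr]=\boldsymbol{0}_{2(J+1)},
\]
I would differentiate with respect to $\epsilon$ and evaluate at $\epsilon=0$, using Fisher consistency $\boldsymbol{T}_\gamma(\boldsymbol{G}_0)=\boldsymbol{\theta}_0$ and $\partial\boldsymbol{\theta}_\epsilon/\partial\epsilon|_{\epsilon=0}=\text{IF}(t,\boldsymbol{T}_\gamma,F_{\boldsymbol{\theta}_0})$.

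Writing $\partial\boldsymbol{\pi}_i(y,\boldsymbol{\theta})/\partial\boldsymbol{\theta}=(2y-1)\,\partial F_{\boldsymbol{\theta}}(\tau_i,\boldsymbol{x}_i)/\partial\boldsymbol{\theta}$ and using $\boldsymbol{p}_{i,0}(y)=\boldsymbol{\pi}_i(y,\boldsymbol{\theta}_0)$, the terms in which the derivative falls on a factor depending on $\boldsymbol{\theta}_\epsilon$ collapse — exactly as in the proof of Proposition~\ref{prop:libro} — to $\boldsymbol{J}_\gamma(\boldsymbol{\theta}_0)\,\text{IF}(t,\boldsymbol{T}_\gamma,F_{\boldsymbol{\theta}_0})$, the ``model'' pieces cancelling pairwise against the $\boldsymbol{p}_{i,0}$–weighted ones. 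The only surviving term comes from $\partial\boldsymbol{p}_{i,\epsilon}(y)/\partial\epsilon|_{\epsilon=0}=\delta_{t_i}(y)-\boldsymbol{\pi}_i(y,\boldsymbol{\theta}_0)$; summing it over $y\in\{0,1\}$ and using $\boldsymbol{\pi}_i(1,\cdot)=F_{\boldsymbol{\theta}_0}(\tau_i,\boldsymbol{x}_i)$, $\boldsymbol{\pi}_i(0,\cdot)=R_{\boldsymbol{\theta}_0}(\tau_i,\boldsymbol{x}_i)$ and $\partial R_{\boldsymbol{\theta}}/\partial\boldsymbol{\theta}=-\partial F_{\boldsymbol{\theta}}/\partial\boldsymbol{\theta}$, the two $y$-summands merge into a term proportional, in the $i$-th stratum, to $\bigl(F_{\boldsymbol{\theta}_0}(\tau_i,\boldsymbol{x}_i)^{\gamma-1}+R_{\boldsymbol{\theta}_0}(\tau_i,\boldsymbol{x}_i)^{\gamma-1}\bigr)\bigl(\partial F_{\boldsymbol{\theta}_0}(\tau_i,\boldsymbol{x}_i)/\partial\boldsymbol{\theta}\bigr)\bigl(F_{\boldsymbol{\theta}_0}(\tau_i,\boldsymbol{x}_i)-\Delta_{t_i}^{(1)}\bigr)$. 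Left–multiplying the resulting linear identity by $\boldsymbol{J}_\gamma^{-1}(\boldsymbol{\theta}_0)$ and reinstating the $K_i/K$ weights gives the stated formula.

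The main obstacle is bookkeeping rather than analysis: one must handle carefully the aggregation of the $K_i$ within–stratum degenerate contaminations into the single term $\Delta_{t_i}^{(1)}=\sum_{k=1}^{K_i}\Delta_{t_{i,k}}^{(1)}$, keep track of the $K_i/K$ weights attached to each stratum, and verify that Fisher consistency does make \emph{all} non–contaminating terms combine into precisely $\boldsymbol{J}_\gamma(\boldsymbol{\theta}_0)$ — with the sign fixed by the convention adopted for the degenerate term $\Delta_{t_i}^{(1)}$ — so that no spurious contribution survives. Once this is in place the derivation is a routine chain–rule computation on (\ref{eq:ecuacionesIF}), entirely parallel to Theorem~\ref{prop:robustness1}; alternatively, it follows at once from the additivity of the Gâteaux derivative by summing the expression in Theorem~\ref{prop:robustness1} over $i_0$ and over the devices within each stratum.
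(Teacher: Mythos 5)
You should first note that the paper itself offers no proof of this statement: it is quoted verbatim as a result established in Balakrishnan et al.\ (2021a), so there is no in-paper argument to compare against. Your self-contained route --- perturb all strata jointly, plug the contaminated masses into the estimating identity (\ref{eq:ecuacionesIF}), differentiate in $\epsilon$ at $0$, and use Fisher consistency so that the factor $\boldsymbol{\pi}_i^{\gamma}(y,\boldsymbol{\theta})-\boldsymbol{\pi}_i^{\gamma-1}(y,\boldsymbol{\theta})\boldsymbol{p}_i(y)$ vanishes at $\epsilon=0$, making the Jacobian collapse to $\boldsymbol{J}_\gamma(\boldsymbol{\theta}_0)$ while only the derivative of $\boldsymbol{p}_{i,\epsilon}$ survives --- is the standard argument behind Theorem \ref{prop:robustness1} and is analytically sound: carrying it out gives $\boldsymbol{J}_\gamma(\boldsymbol{\theta}_0)\,\mathrm{IF}=\sum_{i=1}^{I}\tfrac{K_i}{K}\left(F_{\boldsymbol{\theta}_0}(\tau_i,\boldsymbol{x}_i)^{\gamma-1}+R_{\boldsymbol{\theta}_0}(\tau_i,\boldsymbol{x}_i)^{\gamma-1}\right)\tfrac{\partial F_{\boldsymbol{\theta}_0}(\tau_i,\boldsymbol{x}_i)}{\partial\boldsymbol{\theta}}\left(\delta_{t_i}(1)-F_{\boldsymbol{\theta}_0}(\tau_i,\boldsymbol{x}_i)\right)$, which is the claimed display up to the issues below.

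Two pieces of bookkeeping that you explicitly defer are exactly where care is needed. First, the sign: with the paper's convention $\delta_{t_{i,k}}(y)=y\,\delta^{(1)}_{t_{i,k}}+(1-y)\,\delta^{(2)}_{t_{i,k}}$, the direct chain rule produces the contamination term proportional to $\Delta^{(1)}_{t_i}-F_{\boldsymbol{\theta}_0}(\tau_i,\boldsymbol{x}_i)$, whereas the statement has $F_{\boldsymbol{\theta}_0}(\tau_i,\boldsymbol{x}_i)-\Delta^{(1)}_{t_i}$; saying the sign is ``fixed by the convention'' is not enough --- you must state which labelling of the contaminating point (failure vs.\ survival) you adopt so that your display and the theorem agree. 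Second, the ``superposition'' shortcut is not literal: summing the expression of Theorem \ref{prop:robustness1} over the $K_i$ devices of stratum $i$ yields a term $K_i\,F_{\boldsymbol{\theta}_0}(\tau_i,\boldsymbol{x}_i)-\Delta^{(1)}_{t_i}$, not $F_{\boldsymbol{\theta}_0}(\tau_i,\boldsymbol{x}_i)-\Delta^{(1)}_{t_i}$, because $\Delta^{(1)}_{t_i}=\sum_{k}\Delta^{(1)}_{t_{i,k}}$ aggregates $K_i$ degenerate masses while $F$ appears once; likewise your aggregated $\boldsymbol{p}_{i,\epsilon}$ is then not a probability mass function. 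To recover the stated formula you need to specify how the joint contamination is normalized (e.g.\ contaminating each per-device distribution and absorbing the count into $\Delta^{(1)}_{t_i}$), which is admittedly a looseness already present in the theorem's own notation. With those two conventions pinned down, your implicit-differentiation derivation does establish the result.
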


Under the detailed conditions and assuming log-logistic lifetimes, from equations (\ref{eq:derFaj}) and (\ref{eq:derFbj}), we have:
\begin{align*}
	\frac{ \partial F_{\alpha_i, \beta_i}(\tau_i, \boldsymbol{x}_i)}{\partial \boldsymbol{\theta} }   &= \begin{pmatrix}
		-\frac{ \alpha_i^{\beta_i} \tau_i^{\beta_i}}{(\tau_i^{\beta_i} + \alpha_i^{\beta_i})^2} \cdot \beta_i \boldsymbol{x}_i \text{,} & \frac{\tau_i^{\beta_i} \alpha_i^{\beta_i} \log \frac{\tau_i}{\alpha_i}}{(\tau_i^{\beta_i} + \alpha_i^{\beta_i})^2} \cdot \beta_i \boldsymbol{x}_i 
	\end{pmatrix}^T   \begin{pmatrix}
		- \beta_i \boldsymbol{x}_i \text{,} &  \log \frac{\tau_i}{\alpha_i} \cdot \beta_i \boldsymbol{x}_i
	\end{pmatrix}^T 
	F_{\alpha_i, \beta_i}(\tau_i, \boldsymbol{x}_i) \cdot R_{\alpha_i, \beta_i}(\tau_i, \boldsymbol{x}_i).
\end{align*}

Based on the previous results, the following theorem is established.
\begin{theorem}
	Let $\boldsymbol{T}_\gamma(G)$ be the statistical functional corresponding to the WMDPDE estimator under log-logistic lifetimes, obtained by minimizing the expression \eqref{Tgamma}. Then,
	\begin{enumerate}
		\item The IF for one-shot device testing with multiple stress factors with respect to the \(k\)-th observation of the \(i_0\)-th group, assuming log-logistic lifetime distributions, is given by:
		\begin{align*}
			\text{IF}(t_{i_0,k}, \boldsymbol{T}_\gamma, F_{\boldsymbol{\theta}_0}) &= \boldsymbol{J}_\gamma^{-1}(\boldsymbol{\theta}_0) \frac{K_{i_0}}{K} 
			\begin{pmatrix}
				- \beta_{i_0} \boldsymbol{x}_{i_0} \text{,} &  \log \frac{\tau_{i_0}}{\alpha_{i_0}} \cdot \beta_{i_0} \boldsymbol{x}_{i_0}
			\end{pmatrix}^T \\
			&\quad \cdot F_{\alpha_{i_0}, \beta_{i_0}}(\tau_{i_0}, \boldsymbol{x}_{i_0}) \cdot R_{\alpha_{i_0}, \beta_{i_0}}(\tau_{i_0}, \boldsymbol{x}_{i_0}) \\
			&\quad \left( F_{\alpha_{i_0}, \beta_{i_0}}(\tau_{i_0}, \boldsymbol{x}_{i_0})^{\gamma-1} + R_{\alpha_{i_0}, \beta_{i_0}}(\tau_{i_0}, \boldsymbol{x}_{i_0})^{\gamma-1} \right) \\
			&\quad \cdot \left( F_{\alpha_{i_0}, \beta_{i_0}}(\tau_{i_0}, \boldsymbol{x}_{i_0}) - \Delta_{t_{i_0}}^{(1)} \right),
		\end{align*}
		where \(\Delta_{t_{i_0}}^{(1)}\) is the cumulative distribution function of a degenerate variable at the point \((t_{i_0})\).
		
		\item The IF for one-shot device testing with multiple stress factors with respect to all observations is given by:
		\begin{align*}
			\text{IF}(t_{i_0,k}, \boldsymbol{T}_\gamma, F_{\boldsymbol{\theta}_0}) &= \boldsymbol{J}_\gamma^{-1}(\boldsymbol{\theta}_0) \sum_{i=1}^I \frac{K_{i}}{K} 
			\begin{pmatrix}
				- \beta_{i_0} \boldsymbol{x}_{i_0} \text{,} &  \log \frac{\tau_{i_0}}{\alpha_{i_0}} \cdot \beta_{i_0} \boldsymbol{x}_{i_0}
			\end{pmatrix}^T \\
			&\quad \cdot F_{\alpha_{i_0}, \beta_{i_0}}(\tau_{i_0}, \boldsymbol{x}_{i_0}) \cdot R_{\alpha_{i_0}, \beta_{i_0}}(\tau_{i_0}, \boldsymbol{x}_{i_0}) \\
			&\quad \left( F_{\alpha_{i_0}, \beta_{i_0}}(\tau_{i_0}, \boldsymbol{x}_{i_0})^{\gamma-1} + R_{\alpha_{i_0}, \beta_{i_0}}(\tau_{i_0}, \boldsymbol{x}_{i_0})^{\gamma-1} \right) \\
			&\quad \cdot \left( F_{\alpha_{i_0}, \beta_{i_0}}(\tau_{i_0}, \boldsymbol{x}_{i_0}) - \Delta_{t_{i}}^{(1)} \right),
		\end{align*}
		where \(\Delta_{t_{i}}^{(1)} = \sum_{k=1}^{K_I} \Delta_{t_{i, k}}^{(1)}\).
	\end{enumerate}
	
\end{theorem}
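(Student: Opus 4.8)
The plan is to obtain both displayed formulas by specializing the general influence-function expressions of Theorems \ref{prop:robustness1} and \ref{prop:robustez2} to the log-logistic model; the only genuine computation is the gradient $\partial F_{\boldsymbol{\theta}}(\tau_i,\boldsymbol{x}_i)/\partial\boldsymbol{\theta}$ under the assumed log-linear link, which is then inserted into those two formulas.

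First I would compute the partial derivatives of $F_{\boldsymbol{\theta}}(\tau_i,\boldsymbol{x}_i)=\tau_i^{\beta_i}/(\tau_i^{\beta_i}+\alpha_i^{\beta_i})$ with respect to the components of $\boldsymbol{\theta}=(a_j,b_j,\ j=0,\dots,J)$. With $\alpha_i=\exp(\sum_j a_j x_{ij})$ and $\beta_i=\exp(\sum_j b_j x_{ij})$, the chain rule gives $\partial\alpha_i/\partial a_j=\alpha_i x_{ij}$ and $\partial\beta_i/\partial b_j=\beta_i x_{ij}$, while a direct differentiation of $F_{\boldsymbol{\theta}}$ yields $\partial F_{\boldsymbol{\theta}}/\partial\alpha_i=-\beta_i\alpha_i^{\beta_i-1}\tau_i^{\beta_i}/(\tau_i^{\beta_i}+\alpha_i^{\beta_i})^2$ and $\partial F_{\boldsymbol{\theta}}/\partial\beta_i=\log(\tau_i/\alpha_i)\,\tau_i^{\beta_i}\alpha_i^{\beta_i}/(\tau_i^{\beta_i}+\alpha_i^{\beta_i})^2$. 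The key simplification is the identity $\tau_i^{\beta_i}\alpha_i^{\beta_i}/(\tau_i^{\beta_i}+\alpha_i^{\beta_i})^2=F_{\alpha_i,\beta_i}(\tau_i,\boldsymbol{x}_i)\,R_{\alpha_i,\beta_i}(\tau_i,\boldsymbol{x}_i)$, which collapses both derivatives; assembling the $2(J+1)$ components then gives
\[
\frac{\partial F_{\alpha_i,\beta_i}(\tau_i,\boldsymbol{x}_i)}{\partial\boldsymbol{\theta}}=\begin{pmatrix}-\beta_i\boldsymbol{x}_i\\ \log\frac{\tau_i}{\alpha_i}\,\beta_i\boldsymbol{x}_i\end{pmatrix}F_{\alpha_i,\beta_i}(\tau_i,\boldsymbol{x}_i)\,R_{\alpha_i,\beta_i}(\tau_i,\boldsymbol{x}_i),
\]
which is precisely the expression already recorded in (\ref{eq:derFaj})--(\ref{eq:derFbj}) and obtained while proving Theorem \ref{th:ecuaciones}; I would therefore simply cite those computations rather than redo them.

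Next, for part~1, I would substitute this gradient evaluated at $\boldsymbol{\theta}_0$, together with the factor $\bigl(F_{\boldsymbol{\theta}_0}(\tau_{i_0},\boldsymbol{x}_{i_0})^{\gamma-1}+R_{\boldsymbol{\theta}_0}(\tau_{i_0},\boldsymbol{x}_{i_0})^{\gamma-1}\bigr)$ and the weight $K_{i_0}/K$, into the formula of Theorem \ref{prop:robustness1} with stratum index $i_0$; the term $\bigl(F_{\boldsymbol{\theta}_0}(\tau_{i_0},\boldsymbol{x}_{i_0})-\Delta_{t_{i_0}}^{(1)}\bigr)$ is carried over unchanged, and the claimed identity follows at once. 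For part~2, the same substitution into the formula of Theorem \ref{prop:robustez2}, now summed over $i=1,\dots,I$ with $\Delta_{t_i}^{(1)}=\sum_{k=1}^{K_i}\Delta_{t_{i,k}}^{(1)}$, gives the second displayed expression.

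The argument is entirely routine and I expect no substantial obstacle: the only points requiring care are the one-line verification of the identity $\tau_i^{\beta_i}\alpha_i^{\beta_i}/(\tau_i^{\beta_i}+\alpha_i^{\beta_i})^2=F_{\alpha_i,\beta_i}R_{\alpha_i,\beta_i}$ that produces the factorized form of the influence function, and the bookkeeping of the block structure of the $2(J+1)$-dimensional gradient together with the consistent use of the stratum index ($i_0$ in part~1, running $i$ in the sum in part~2).
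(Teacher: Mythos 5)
Your proposal is correct and follows essentially the same route as the paper: the paper likewise derives the gradient $\partial F_{\alpha_i,\beta_i}(\tau_i,\boldsymbol{x}_i)/\partial\boldsymbol{\theta}$ from equations (\ref{eq:derFaj})--(\ref{eq:derFbj}), uses the factorization through $F_{\alpha_i,\beta_i}R_{\alpha_i,\beta_i}$, and substitutes it directly into the general influence-function formulas of Theorems \ref{prop:robustness1} and \ref{prop:robustez2}. No gaps; your handling of the stratum index and the block structure of the $2(J+1)$-dimensional gradient matches the paper's argument.
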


\section{Testing hypothesis}
We are interested in testing a general null hypothesis that restricts the parameter space $\boldsymbol{\Theta}$ to a proper subset $\Theta_0$ of $\Theta = \{\boldsymbol{\theta} : \boldsymbol{\theta} = (a_j, b_j, j = 0, \ldots, J)\} \subset \mathbb{R}^{2(J+1)}$, that is, testing
\begin{equation}
	H_0: \boldsymbol{\theta} \in \Theta_0 \text{ against } H_1: \boldsymbol{\theta} \notin \Theta_0.
	\label{eq:H0Wald}
\end{equation}

The restricted parameter space $\Theta_0$ is defined by a set of $r$ restrictions expressed as
\begin{equation}
	\boldsymbol{m}(\boldsymbol{\theta}) = \boldsymbol{\mathbf{0}}_r,
	\label{eq:g}
\end{equation}
on $\Theta$, where $\boldsymbol{m}: \mathbb{R}^{2(J+1)} \rightarrow \mathbb{R}^r$ is a vector function such that the matrix of dimension $2(J+1) \times r$
\begin{equation*}
	\boldsymbol{M}(\boldsymbol{\theta}) = \frac{\partial \boldsymbol{m}^T(\boldsymbol{\theta})}{\partial \boldsymbol{\theta}}
\end{equation*}
exists, is continuous at $\boldsymbol{\theta}$, and satisfies that $\text{rank}(\boldsymbol{M}(\boldsymbol{\theta})) = r$. $\boldsymbol{\mathbf{0}}_r$ denotes the null vector of dimension $r$. 
Therefore, $\Theta_0 = \{ \boldsymbol{\theta} \in \Theta : \boldsymbol{m}(\boldsymbol{\theta})= \boldsymbol{\mathbf{0}}_r \}$. This setup includes the significance test of stress factors on each parameter, such as testing $H_0: a_j = b_j = 0$ for specific components of $\boldsymbol{\theta}$. Such tests have practical utility in determining the importance of individual stress factors and their contributions to the overall reliability or survival model.
We now aim to generalize the Wald and Rao test statistics to obtain robust versions using the WMDPDE. The main advantage of these robust versions is their resilience to outliers, ensuring that decisions are not strongly influenced by atypical observations. 

\subsection{Wald-Type Test Statistics}
\begin{definition}
	The Wald-type test statistics for testing  (\ref{eq:H0Wald}) based on the WMDPDE is defined as
	\allowdisplaybreaks
	\begin{equation*}
		W_K(\hat{\boldsymbol{\theta}}_\gamma) = K  \boldsymbol{m}(\hat{\boldsymbol{\theta}_\gamma})^T \left( \boldsymbol{M}(\hat{\boldsymbol{\theta}}_\gamma)^T \boldsymbol{\Sigma}(\hat{\boldsymbol{\theta}}_\gamma) \boldsymbol{M}(\hat{\boldsymbol{\theta}}_\gamma) \right)^{-1}  \boldsymbol{m}(\hat{\boldsymbol{\theta}_\gamma})
	\end{equation*}
	where $\boldsymbol{\Sigma}(\hat{\boldsymbol{\theta}}_\gamma) = \boldsymbol{J}_\gamma^{-1}(\hat{\boldsymbol{\theta}}_\gamma) \boldsymbol{K}_\gamma(\hat{\boldsymbol{\theta}}_\gamma) \boldsymbol{J}_\gamma^{-1}(\hat{\boldsymbol{\theta}}_\gamma)$.
	\label{def:Wald}
\end{definition}
This expression represents a generalization of the classical Wald test statistic. When \(\gamma = 0\), the WMDPDE coincides with the MLE, and the test statistic reduces to the classical Wald statistic, which can be written as:
\[
W_K(\hat{\boldsymbol{\theta}}_{\gamma=0}) = K  \boldsymbol{m}(\hat{\boldsymbol{\theta}}_{\gamma=0})^T \left( \boldsymbol{M}(\hat{\boldsymbol{\theta}}_{\gamma=0})^T \boldsymbol{I_F}(\hat{\boldsymbol{\theta}}) \boldsymbol{M}(\hat{\boldsymbol{\theta}}_{\gamma=0}) \right)^{-1}  \boldsymbol{m}(\hat{\boldsymbol{\theta}}_{\gamma=0}),
\]
where \(\boldsymbol{I_F}(\boldsymbol{\theta})\) is the Fisher information matrix defined in Corollary \ref{prop:emvdistr}.
Based on the asymptotic distribution of \(  \hat{\boldsymbol{\theta}}_\gamma \) can be seen established that,
\begin{equation}
	W_K(\hat{\boldsymbol{\theta}}_\gamma) \underset{K \rightarrow \infty}{\overset{ \mathcal{L}}{\to}}  \chi_r^2. 
	\label{distrWald}
\end{equation}
According to (\ref{distrWald}), we reject the null hypothesis stated in (\ref{eq:H0Wald}), at level $\alpha$, if:
\begin{equation}
	W_K(\hat{\boldsymbol{\theta}}_\gamma) > \chi_{r,\alpha}^2 \text{ .}
\end{equation}
The Wald-type test statistics are Fraser consistent, meaning that:
\begin{equation}
	\lim_{K \to \infty} \Pr\left(W_K(\hat{\boldsymbol{\theta}}_\gamma) > \chi_{r,\alpha}^2\right) = 1,
\end{equation}

\begin{example}
	Let us assume that the objective is to evaluate whether the stress factors $l$ and $r$ ($l < r$) have a significant relationship with the device's useful life. To address this, we shall consider testing the following hypothesis:
	\begin{equation*}
		H_0: a_l = a_r = 0, b_l = b_r = 0.
	\end{equation*}
	Using the notation presented in equation (\ref{eq:g}), we have:
	\begin{equation*}
		H_0: \boldsymbol{m}(\boldsymbol{\theta}) = \boldsymbol{0}_4,
	\end{equation*}
	where:
	\begin{equation*}
		\boldsymbol{m}(\boldsymbol{\theta}) = (a_l, a_r, b_l, b_r)^T.
	\end{equation*}
	In this case, we have:
	\begin{equation*}
		\boldsymbol{M}(\boldsymbol{\theta}) = 
		\begin{scriptsize}
			\begin{array}{c}
				\left(
				\begin{array}{cccccccccccccccc}
					1 &  \cdots \cdots & l+1 & \cdots \cdots & r+1 &  \cdots \cdots & J+1 & \cdots \cdots & J+1+r & \cdots \cdots & J+1+l &  \cdots \cdots & 2(J+1) \\
					\hline \\
					0 & \cdots \cdots \hspace{3mm}  & 1 & \cdots \cdots  & 1 & \cdots \cdots & 0 & \cdots \cdots & 0 &  \cdots \cdots & 0 & \cdots \cdots & 0 \\
					0 & \cdots \cdots  & 0 & \cdots \cdots & 0 & \cdots \cdots & 0 & \cdots \cdots & 1 & \cdots \cdots & 1 & \cdots \cdots & 0  
				\end{array} 
				\right).
			\end{array}
		\end{scriptsize}
	\end{equation*}
	
	According to this matrix and the expressions for $\boldsymbol{J}_\gamma(\boldsymbol{\theta})$ and $\boldsymbol{K}_\gamma(\boldsymbol{\theta})$, we reject the null hypothesis if $W_K(\hat{\boldsymbol{\theta}}_\gamma) > \chi_{4,\alpha}^2$.
\end{example}

\subsection{Rao-Type Test Statistic}
In order to define Rao-type tests for composite null hypotheses, it is necessary to introduce the restricted WMDPDE (RWMDPDE), which is the WMDPDE estimator under a restricted parameter space.
\begin{definition}
	The RWMDPDE of $\boldsymbol{\theta} \in \Theta_0 = \{\boldsymbol{\theta} \in \Theta : \boldsymbol{m}(\boldsymbol{\theta}) = \boldsymbol{0}_r\}$ is defined by
	\begin{equation*}
		\tilde{\boldsymbol{\theta}}_\gamma    = \arg \min_{\boldsymbol{\theta} \in \Theta_0} WD_{PDD}^\gamma(\boldsymbol{\theta}) = \arg \min_{\boldsymbol{\theta} \in \Theta_0} \sum_{i=1}^I \frac{K_i}{K}D^*_{\gamma} (\hat{\boldsymbol{p}}_i, \boldsymbol{\pi}_i(\boldsymbol{\theta})). 
	\end{equation*}
\end{definition}

From Theorem \ref{th:ecuaciones}, if we denote
\begin{equation}
	\boldsymbol{U}_{\gamma}(\boldsymbol{\theta}) = \sum_{i=1}^{I} \left( -\beta_i \boldsymbol{x}_i, \log \frac{\tau_i}{\alpha_i} \beta_i \boldsymbol{x}_i \right) \left( K_i \frac{\tau_i^{\beta_i}}{\tau_i^{\beta_i} + \alpha_i^{\beta_i}} - n_i \right) \left( \frac{\tau_i^{\beta_i \gamma} \alpha_i^{\beta_i} + \alpha_i^{\beta_i \gamma} \tau_i^{\beta_i}}{(\tau_i^{\beta_i} + \alpha_i^{\beta_i})^{\gamma + 1}} \right),
	\label{eq:u}
\end{equation}
the RWMDPDE is obtained as the solution of the following system of $2(J+1)+r$ equations:
\begin{equation}
	\begin{cases}
		\boldsymbol{U}_\gamma(\boldsymbol{\theta}) + \boldsymbol{M}(\boldsymbol{\theta}) \boldsymbol{\lambda}_\gamma = \boldsymbol{0}_{2(J+1)} \\
		\boldsymbol{m}(\boldsymbol{\theta}) = \boldsymbol{0}_r
	\end{cases}
\end{equation}
where $\lambda$ denotes the Lagrange multipliers.
Martin (2023) presented the asymptotic distribution of the RWMDPDE when the observations are independent but not identically distributed, as a generalization of the result presented in Basu et al. (2022) for independent and identically distributed random variables. The following result is obtained by applying the above findings to the context of one-shot devices, assuming lifetimes follow a log-logistic distribution.

\begin{theorem}
	Assuming that all the matrices defined above exist, are finite, and that the true distribution belongs to the model with the true parameter $\boldsymbol{\theta}_0 \in \Theta$, the RWMDPDE of $\boldsymbol{\theta}$ exhibits the following properties:
	
	a) The RWMDPDE estimation equations have a sequence of consistent roots $\tilde{\boldsymbol{\theta}}_\gamma$ such that
	\begin{equation*}
		\tilde{\boldsymbol{\theta}}_\gamma    \underset{K \rightarrow \infty}{\overset{ \mathcal{P}}{\to}} 
		\boldsymbol{\theta}_0.
	\end{equation*}

	b) The asymptotic null distribution of the RWMDPDE is given by,
	\begin{equation*}
		\sqrt{K} (\tilde{\boldsymbol{\theta}}_\gamma   - \boldsymbol{\theta}_0) \underset{K \rightarrow \infty}{\overset{ \mathcal{L}}{\to}}  N \biggr(\boldsymbol{0}_{2(J+1)}, \boldsymbol{\Sigma}_\gamma(\boldsymbol{\theta}_0)\biggr), 
	\end{equation*}
	where,
	\begin{equation*}
		\boldsymbol{\Sigma}_\gamma(\boldsymbol{\theta}_0) = \boldsymbol{P}_\gamma(\theta_0) \boldsymbol{K}_{\gamma}(\boldsymbol{\theta}_0) \boldsymbol{P}_\gamma(\boldsymbol{\theta}_0)
	\end{equation*}
	with
	\begin{equation*}
		\boldsymbol{P}_\gamma(\boldsymbol{\theta}_0) = \boldsymbol{J}_{\gamma}^{-1}(\boldsymbol{\theta}_0) - \boldsymbol{Q}_\gamma(\boldsymbol{\theta}_0) \boldsymbol{M}(\boldsymbol{\theta}_0)^T \boldsymbol{J}_{\gamma}^{-1}(\boldsymbol{\theta}_0)
	\end{equation*}
	and $$\boldsymbol{Q}_\gamma(\boldsymbol{\theta}_0) = \boldsymbol{J}_{\gamma}^{-1}(\boldsymbol{\theta}_0) \boldsymbol{M}(\boldsymbol{\theta}_0) \biggr( \boldsymbol{M}(\boldsymbol{\theta}_0)^T \boldsymbol{J}_{\gamma}^{-1}(\boldsymbol{\theta}_0) \boldsymbol{M}(\boldsymbol{\theta}_0) \biggr)^{-1}.$$ 
\end{theorem}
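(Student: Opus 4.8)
The argument follows the general template for restricted minimum density power divergence estimators under independent but non-identically distributed observations developed in Martin (2023) (generalizing Basu et al. (2022)); the work here is to specialize that template to the grouped-Bernoulli structure of the one-shot device model and to identify the limiting matrices with the explicit log-logistic quantities $\boldsymbol{J}_\gamma(\boldsymbol{\theta})$ and $\boldsymbol{K}_\gamma(\boldsymbol{\theta})$ obtained in Theorem \ref{th:asim_distr}. Throughout we argue under the null hypothesis, i.e. $\boldsymbol{\theta}_0 \in \Theta_0$, equivalently $\boldsymbol{m}(\boldsymbol{\theta}_0) = \boldsymbol{0}_r$, and under the regularity conditions that keep the weights $K_i/K$ bounded away from $0$, so that $\boldsymbol{J}_\gamma(\boldsymbol{\theta}_0)$ and $\boldsymbol{K}_\gamma(\boldsymbol{\theta}_0)$ converge to finite positive definite limits.

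\emph{Part (a).} I would first show that $K^{-1} WD_\gamma^*(\boldsymbol{\theta})$ converges in probability, uniformly on a compact neighbourhood of $\boldsymbol{\theta}_0$, to a deterministic limit $\bar{H}_\gamma(\boldsymbol{\theta})$, using that $\hat{p}_{i1} = n_i/K_i \overset{P}{\to} \pi_{i1}(\boldsymbol{\theta}_0)$ for each $i$, that $D^*_\gamma(\cdot,\cdot)$ is smooth in its second argument, and that the weights $K_i/K$ are controlled. By the Fisher consistency of the functional $\boldsymbol{T}_\gamma$ noted after (\ref{Tgamma}), $\bar{H}_\gamma$ is uniquely minimized at $\boldsymbol{\theta}_0$. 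Since $\Theta_0$ is closed and contains $\boldsymbol{\theta}_0$, a standard argmin/continuity argument — or, equivalently, a Brouwer-type fixed-point argument applied to the Lagrangian estimating equations on a shrinking ball around $\boldsymbol{\theta}_0$ — produces a consistent root $\tilde{\boldsymbol{\theta}}_\gamma \overset{P}{\to} \boldsymbol{\theta}_0$, which is part (a).

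\emph{Part (b).} Let $\boldsymbol{U}_\gamma(\boldsymbol{\theta})$ be the estimating function in (\ref{eq:u}). I would Taylor-expand the two blocks of the Lagrangian system $\boldsymbol{U}_\gamma(\boldsymbol{\theta}) + \boldsymbol{M}(\boldsymbol{\theta})\boldsymbol{\lambda}_\gamma = \boldsymbol{0}$, $\boldsymbol{m}(\boldsymbol{\theta}) = \boldsymbol{0}$ about $\boldsymbol{\theta}_0$: (i) $\boldsymbol{U}_\gamma(\tilde{\boldsymbol{\theta}}_\gamma) = \boldsymbol{U}_\gamma(\boldsymbol{\theta}_0) + (\partial \boldsymbol{U}_\gamma(\boldsymbol{\theta}^\ast)/\partial \boldsymbol{\theta}^T)(\tilde{\boldsymbol{\theta}}_\gamma - \boldsymbol{\theta}_0)$ for $\boldsymbol{\theta}^\ast$ on the segment joining $\tilde{\boldsymbol{\theta}}_\gamma$ and $\boldsymbol{\theta}_0$, where by part (a) and continuity $K^{-1}\partial \boldsymbol{U}_\gamma(\boldsymbol{\theta}^\ast)/\partial \boldsymbol{\theta}^T \overset{P}{\to} \boldsymbol{J}_\gamma(\boldsymbol{\theta}_0)$ — the identification of the limiting Jacobian with $\boldsymbol{J}_\gamma$ being exactly the computation underlying Proposition \ref{prop:libro} and Theorem \ref{th:asim_distr}, the terms proportional to $K_i\pi_{i1}(\boldsymbol{\theta}_0) - n_i$ vanishing in expectation; (ii) $\boldsymbol{m}(\tilde{\boldsymbol{\theta}}_\gamma) = \boldsymbol{M}(\boldsymbol{\theta}_0)^T(\tilde{\boldsymbol{\theta}}_\gamma - \boldsymbol{\theta}_0) + o_P(K^{-1/2})$ since $\boldsymbol{m}(\boldsymbol{\theta}_0) = \boldsymbol{0}$; and (iii) the central limit theorem $K^{-1/2}\boldsymbol{U}_\gamma(\boldsymbol{\theta}_0) \overset{\mathcal{L}}{\to} \mathcal{N}(\boldsymbol{0}, \boldsymbol{K}_\gamma(\boldsymbol{\theta}_0))$, which holds because $\boldsymbol{U}_\gamma(\boldsymbol{\theta}_0)$ is a sum of independent, mean-zero, bounded-variance terms (each a linear function of the binomial count $n_i$ times the scalar factor in (\ref{eq:u})), so Lindeberg--Feller applies and the limiting covariance is precisely $\boldsymbol{K}_\gamma(\boldsymbol{\theta}_0)$ by the variance computation of Theorem \ref{th:asim_distr}. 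Collecting (i)--(iii) and scaling by $\sqrt{K}$, the pair $(\sqrt{K}(\tilde{\boldsymbol{\theta}}_\gamma - \boldsymbol{\theta}_0),\, K^{-1/2}\boldsymbol{\lambda}_\gamma)$ satisfies, up to $o_P(1)$, the saddle-point linear system
\[
\begin{pmatrix} \boldsymbol{J}_\gamma(\boldsymbol{\theta}_0) & \boldsymbol{M}(\boldsymbol{\theta}_0) \\ \boldsymbol{M}(\boldsymbol{\theta}_0)^T & \boldsymbol{0} \end{pmatrix} \begin{pmatrix} \sqrt{K}(\tilde{\boldsymbol{\theta}}_\gamma - \boldsymbol{\theta}_0) \\ K^{-1/2}\boldsymbol{\lambda}_\gamma \end{pmatrix} = \begin{pmatrix} -K^{-1/2}\boldsymbol{U}_\gamma(\boldsymbol{\theta}_0) \\ \boldsymbol{0}_r \end{pmatrix}.
\]
Solving the first block for $\sqrt{K}(\tilde{\boldsymbol{\theta}}_\gamma - \boldsymbol{\theta}_0)$ and substituting into the constraint block — which is where the hypotheses that $\boldsymbol{J}_\gamma^{-1}$ exists and $\mathrm{rank}(\boldsymbol{M}) = r$ (hence $\boldsymbol{M}^T\boldsymbol{J}_\gamma^{-1}\boldsymbol{M}$ is nonsingular) are used — gives $\sqrt{K}(\tilde{\boldsymbol{\theta}}_\gamma - \boldsymbol{\theta}_0) = -\boldsymbol{P}_\gamma(\boldsymbol{\theta}_0)\, K^{-1/2}\boldsymbol{U}_\gamma(\boldsymbol{\theta}_0) + o_P(1)$ with $\boldsymbol{P}_\gamma = \boldsymbol{J}_\gamma^{-1} - \boldsymbol{J}_\gamma^{-1}\boldsymbol{M}(\boldsymbol{M}^T\boldsymbol{J}_\gamma^{-1}\boldsymbol{M})^{-1}\boldsymbol{M}^T\boldsymbol{J}_\gamma^{-1} = \boldsymbol{J}_\gamma^{-1} - \boldsymbol{Q}_\gamma\boldsymbol{M}^T\boldsymbol{J}_\gamma^{-1}$. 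Since $\boldsymbol{P}_\gamma$ is symmetric, Slutsky's theorem together with (iii) yields $\sqrt{K}(\tilde{\boldsymbol{\theta}}_\gamma - \boldsymbol{\theta}_0) \overset{\mathcal{L}}{\to} \mathcal{N}(\boldsymbol{0}, \boldsymbol{P}_\gamma\boldsymbol{K}_\gamma\boldsymbol{P}_\gamma^T) = \mathcal{N}(\boldsymbol{0}, \boldsymbol{\Sigma}_\gamma(\boldsymbol{\theta}_0))$, which is part (b).

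I expect the main obstacles to be the technical bookkeeping rather than the structure: verifying the Lindeberg (or a Lyapunov) condition for the triangular array $\{K^{-1/2}\boldsymbol{U}_{i,\gamma}(\boldsymbol{\theta}_0)\}_{i=1}^I$ and the positive-definiteness of the limits of $\boldsymbol{J}_\gamma$ and $\boldsymbol{K}_\gamma$, both of which hinge on the stress design $\boldsymbol{x}_i$; controlling the Taylor remainders uniformly to $o_P(K^{-1/2})$; and carrying out the block-matrix inversion cleanly. The algebra specific to the log-logistic link — the derivative $\partial F_{\boldsymbol{\theta}}/\partial\boldsymbol{\theta}$ and the resulting matrices $\boldsymbol{M}_i$ — has already been established in Theorem \ref{th:asim_distr}, so here it only needs to be inserted into the Lagrangian framework above.
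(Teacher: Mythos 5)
Your proposal is correct, but it follows a genuinely different route from the paper: the paper gives no self-contained proof of this result at all, obtaining it instead by directly invoking the general asymptotic theory for restricted minimum DPD estimation with independent, non-identically distributed observations in Martin (2023) (itself a generalization of Basu et al. (2022)) and specializing it to the one-shot device model, with the log-logistic quantities $\boldsymbol{J}_\gamma$ and $\boldsymbol{K}_\gamma$ supplied by Theorem \ref{th:asim_distr}. You, in contrast, reconstruct that general argument from scratch for the grouped-binomial structure: consistency via uniform convergence of the objective plus Fisher consistency of $\boldsymbol{T}_\gamma$, a Taylor expansion of the Lagrangian system around $\boldsymbol{\theta}_0$ under $\boldsymbol{m}(\boldsymbol{\theta}_0)=\boldsymbol{0}_r$, a Lindeberg--Feller CLT for the independent score contributions with limiting covariance $\boldsymbol{K}_\gamma(\boldsymbol{\theta}_0)$ (this is exactly the mean/variance computation the paper carries out separately in the proof of Proposition \ref{result:rao}), and block elimination of the saddle-point system yielding $\boldsymbol{P}_\gamma=\boldsymbol{J}_\gamma^{-1}-\boldsymbol{Q}_\gamma\boldsymbol{M}^T\boldsymbol{J}_\gamma^{-1}$, hence $\boldsymbol{\Sigma}_\gamma=\boldsymbol{P}_\gamma\boldsymbol{K}_\gamma\boldsymbol{P}_\gamma$ by symmetry of $\boldsymbol{P}_\gamma$ and Slutsky. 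What the paper's citation buys is brevity and deferral of the regularity bookkeeping to the cited reference; what your direct derivation buys is an explicit verification, within this model, that the limiting Jacobian of the estimating function and the score covariance are precisely the $\boldsymbol{J}_\gamma$ and $\boldsymbol{K}_\gamma$ of Theorem \ref{th:asim_distr}, which also makes transparent why the same matrices reappear in the Rao-type statistic. Two small points to keep straight if you write this out in full: $\boldsymbol{U}_\gamma$ in (\ref{eq:u}) is $K$ times the gradient of $WD_\gamma^*$ (the $K_i/K$ weights were cleared), so your normalizations $K^{-1}\partial\boldsymbol{U}_\gamma/\partial\boldsymbol{\theta}^T\to\boldsymbol{J}_\gamma$ and $K^{-1/2}\boldsymbol{U}_\gamma(\boldsymbol{\theta}_0)\to N(\boldsymbol{0},\boldsymbol{K}_\gamma)$ are the consistent ones (and agree with $\mathrm{Var}(\boldsymbol{U}_\gamma)=K\boldsymbol{K}_\gamma$ in the appendix), and the asymptotics are in $K\to\infty$ with $I$ fixed, so the "CLT" is really a triangular-array normal approximation of finitely many binomial terms, for which a Lyapunov condition is immediate once each $K_i/K$ stays bounded away from zero.
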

\begin{corollary}
	For \(\gamma = 0\), we have the MLE, and in this case, the asymptotic distribution of \(\tilde{\boldsymbol{\theta}}_{\gamma=0}\) is obtained considering the results of Corollary \ref{prop:emvdistr}. It follows that
	\[
	\sqrt{K} (\tilde{\boldsymbol{\theta}}_{\gamma=0}  - \boldsymbol{\theta}_0) \underset{K \rightarrow \infty}{\overset{ \mathcal{L}}{\to}}  N\biggr(\boldsymbol{0}_{2(J+1)}, \boldsymbol{S}(\boldsymbol{\theta}_0)\biggr),
	\]
	where \(\boldsymbol{S}(\boldsymbol{\theta}_0) =
	\boldsymbol{I_F}(\boldsymbol{\theta}_0)^{-1} \biggr(\boldsymbol{I} - \boldsymbol{M}(\boldsymbol{\theta}_0) \biggr(\boldsymbol{M}(\boldsymbol{\theta}_0)^T \boldsymbol{I_F}(\boldsymbol{\theta}_0)^{-1} \boldsymbol{M}(\boldsymbol{\theta}_0)\biggr)^{-1} \boldsymbol{M}(\boldsymbol{\theta}_0)^T\biggr) \boldsymbol{I_F}(\boldsymbol{\theta}_0)^{-1}.
	\)
\end{corollary}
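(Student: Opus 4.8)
The plan is to obtain this corollary as a direct specialization of the preceding theorem on the asymptotic behaviour of the RWMDPDE to the value $\gamma = 0$. The only extra ingredient is Corollary \ref{prop:emvdistr}, which establishes that for $\gamma=0$ the two matrices governing the sandwich covariance coincide with the Fisher information matrix, namely $\boldsymbol{J}_{0}(\boldsymbol{\theta}) = \boldsymbol{K}_{0}(\boldsymbol{\theta}) = \boldsymbol{I_F}(\boldsymbol{\theta})$. Part (a), consistency of the restricted estimator, is then immediate: it is precisely part (a) of the theorem with $\gamma=0$, and the required regularity conditions hold because $\boldsymbol{I_F}(\boldsymbol{\theta}_0)$ is finite and nonsingular (so that $\boldsymbol{J}_0^{-1}$ exists), as noted after Corollary \ref{prop:emvdistr}.

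For part (b), I would substitute $\gamma=0$ into the expressions for $\boldsymbol{Q}_\gamma(\boldsymbol{\theta}_0)$, $\boldsymbol{P}_\gamma(\boldsymbol{\theta}_0)$ and $\boldsymbol{\Sigma}_\gamma(\boldsymbol{\theta}_0)$ and replace every occurrence of $\boldsymbol{J}_0$ and $\boldsymbol{K}_0$ by $\boldsymbol{I_F} \equiv \boldsymbol{I_F}(\boldsymbol{\theta}_0)$. Writing $\boldsymbol{M} \equiv \boldsymbol{M}(\boldsymbol{\theta}_0)$ and $\boldsymbol{B} = (\boldsymbol{M}^T \boldsymbol{I_F}^{-1}\boldsymbol{M})^{-1}$, this gives
\[
\boldsymbol{Q}_0 = \boldsymbol{I_F}^{-1}\boldsymbol{M}\boldsymbol{B}, \qquad
\boldsymbol{P}_0 = \boldsymbol{I_F}^{-1} - \boldsymbol{I_F}^{-1}\boldsymbol{M}\boldsymbol{B}\boldsymbol{M}^T\boldsymbol{I_F}^{-1} = \boldsymbol{I_F}^{-1}\bigl(\boldsymbol{I} - \boldsymbol{M}\boldsymbol{B}\boldsymbol{M}^T\boldsymbol{I_F}^{-1}\bigr),
\]
and $\boldsymbol{\Sigma}_0 = \boldsymbol{P}_0\,\boldsymbol{I_F}\,\boldsymbol{P}_0$.

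The one computational step is to simplify $\boldsymbol{P}_0\,\boldsymbol{I_F}\,\boldsymbol{P}_0$. Using $\boldsymbol{P}_0\,\boldsymbol{I_F} = \boldsymbol{I} - \boldsymbol{I_F}^{-1}\boldsymbol{M}\boldsymbol{B}\boldsymbol{M}^T$ and multiplying on the right by $\boldsymbol{P}_0$, the cross terms combine through the identity $\boldsymbol{M}^T\boldsymbol{I_F}^{-1}\boldsymbol{M} = \boldsymbol{B}^{-1}$, so that $\boldsymbol{B}\,(\boldsymbol{M}^T\boldsymbol{I_F}^{-1}\boldsymbol{M})\,\boldsymbol{B} = \boldsymbol{B}$; after cancellation one is left with $\boldsymbol{\Sigma}_0 = \boldsymbol{I_F}^{-1} - \boldsymbol{I_F}^{-1}\boldsymbol{M}\boldsymbol{B}\boldsymbol{M}^T\boldsymbol{I_F}^{-1} = \boldsymbol{P}_0$, which is exactly the matrix $\boldsymbol{S}(\boldsymbol{\theta}_0)$ displayed in the statement. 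Hence the sandwich covariance of the restricted WMDPDE collapses, at $\gamma=0$, to the classical restricted-MLE covariance, and the claimed limiting normal law follows at once.

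There is essentially no obstacle here; the only points to watch are that $\boldsymbol{M}^T\boldsymbol{I_F}^{-1}\boldsymbol{M}$ and $\boldsymbol{I_F}$ be invertible at $\boldsymbol{\theta}_0$ — guaranteed by $\mathrm{rank}(\boldsymbol{M}(\boldsymbol{\theta})) = r$ together with the nonsingularity of $\boldsymbol{I_F}(\boldsymbol{\theta}_0)$ — and that the hypotheses of the general RWMDPDE theorem (finiteness of $\boldsymbol{J}_\gamma$, $\boldsymbol{K}_\gamma$ and correct model specification) are inherited in the limit $\gamma\to 0$ from the explicit log-logistic expressions in Theorem \ref{th:asim_distr}.
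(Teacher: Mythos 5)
Your route coincides with the paper's (implicit) one: the corollary carries no separate proof and is meant to follow by setting $\gamma=0$ in the RWMDPDE theorem and invoking Corollary \ref{prop:emvdistr} to replace $\boldsymbol{J}_0(\boldsymbol{\theta}_0)$ and $\boldsymbol{K}_0(\boldsymbol{\theta}_0)$ by $\boldsymbol{I_F}(\boldsymbol{\theta}_0)$, exactly as you do; your algebra, using $\boldsymbol{M}^T\boldsymbol{I_F}^{-1}\boldsymbol{M}=\boldsymbol{B}^{-1}$ to collapse $\boldsymbol{P}_0\,\boldsymbol{I_F}\,\boldsymbol{P}_0$ to $\boldsymbol{P}_0$, is correct (here $\boldsymbol{M}=\boldsymbol{M}(\boldsymbol{\theta}_0)$, $\boldsymbol{I_F}=\boldsymbol{I_F}(\boldsymbol{\theta}_0)$, $\boldsymbol{B}=(\boldsymbol{M}^T\boldsymbol{I_F}^{-1}\boldsymbol{M})^{-1}$).

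One point needs care, though. The matrix you obtain,
\[
\boldsymbol{\Sigma}_0=\boldsymbol{I_F}^{-1}-\boldsymbol{I_F}^{-1}\boldsymbol{M}\boldsymbol{B}\boldsymbol{M}^T\boldsymbol{I_F}^{-1}
=\boldsymbol{I_F}^{-1}\bigl(\boldsymbol{I_F}-\boldsymbol{M}\boldsymbol{B}\boldsymbol{M}^T\bigr)\boldsymbol{I_F}^{-1},
\]
is not literally the displayed $\boldsymbol{S}(\boldsymbol{\theta}_0)=\boldsymbol{I_F}^{-1}\bigl(\boldsymbol{I}-\boldsymbol{M}\boldsymbol{B}\boldsymbol{M}^T\bigr)\boldsymbol{I_F}^{-1}$, which expands to $\boldsymbol{I_F}^{-2}-\boldsymbol{I_F}^{-1}\boldsymbol{M}\boldsymbol{B}\boldsymbol{M}^T\boldsymbol{I_F}^{-1}$ and differs from $\boldsymbol{\Sigma}_0$ in general. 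The factor between the two $\boldsymbol{I_F}^{-1}$'s should be $\boldsymbol{I_F}$ rather than the identity (equivalently, the trailing $\boldsymbol{I_F}^{-1}$ should multiply only the subtracted term), so the statement as printed contains a typo; your covariance is the standard restricted-MLE one and agrees with the $\gamma=0$ specialization of the theorem, but your claim that it "is exactly the matrix $\boldsymbol{S}(\boldsymbol{\theta}_0)$ displayed in the statement" glosses over this mismatch and should instead flag it. A minor additional remark: the consistency part you include is not part of the corollary's statement, so it is harmless but unnecessary.
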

The following result will be important to obtain the asymptotic distribution of the Rao-type tests.
\begin{proposition}
	The asymptotic distribution of \(\boldsymbol{U}_{\gamma}(\tilde{\boldsymbol{\theta}})\), defined in equation (\ref{eq:u}), is
	\[
	\sqrt{K} \boldsymbol{U}_{\gamma}(\tilde{\boldsymbol{\theta}}_{\gamma}) \underset{K \rightarrow \infty}{\overset{ \mathcal{L}}{\to}} N\biggr(\boldsymbol{0}_{2(J+1)},  \boldsymbol{K}_{\gamma}(\boldsymbol{\theta}_0) \biggr),
	\]
	and therefore,
	\[
	K\boldsymbol{U}_{\gamma}(\tilde{\boldsymbol{\theta}}_{\gamma})^T \boldsymbol{Q}_{\gamma}(\boldsymbol{\theta}_0) \biggr(\boldsymbol{Q}_{\gamma}(\boldsymbol{\theta}_0)^T \boldsymbol{K}_{\gamma}(\boldsymbol{\theta}_0) \boldsymbol{Q}_{\gamma}(\boldsymbol{\theta}_0)\biggr)^{-1} \boldsymbol{Q}_{\gamma}(\boldsymbol{\theta}_0)^T \boldsymbol{U}_{\gamma}(\tilde{\boldsymbol{\theta}}_{\gamma}) \underset{K \rightarrow \infty}{\overset{ \mathcal{L}}{\to}} \chi^2_r.
	\]
	\label{result:rao}
\end{proposition}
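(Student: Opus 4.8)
The plan is to follow the standard programme for Rao--type statistics built on a divergence estimating function: (i) a central limit theorem for a normalised version of $\boldsymbol{U}_\gamma$ at the true value $\boldsymbol{\theta}_0$, (ii) its transfer to the restricted estimator $\tilde{\boldsymbol{\theta}}_\gamma$ by linearisation, and (iii) reading off the $\chi^2$ law of the quadratic form from a projection identity. It is convenient to write $\boldsymbol{a}_i(\boldsymbol{\theta}) = \big(-\beta_i\boldsymbol{x}_i,\ \log(\tau_i/\alpha_i)\,\beta_i\boldsymbol{x}_i\big)^T$ and $c_i(\boldsymbol{\theta}) = \big(\tau_i^{\beta_i\gamma}\alpha_i^{\beta_i}+\alpha_i^{\beta_i\gamma}\tau_i^{\beta_i}\big)\big/(\tau_i^{\beta_i}+\alpha_i^{\beta_i})^{\gamma+1}$, so that (\ref{eq:u}) reads $\boldsymbol{U}_\gamma(\boldsymbol{\theta}) = \sum_{i=1}^{I}\boldsymbol{a}_i(\boldsymbol{\theta})\,c_i(\boldsymbol{\theta})\,\big(K_i F_{\boldsymbol{\theta}}(\tau_i,\boldsymbol{x}_i)-n_i\big)$, and to record that $\boldsymbol{a}_i\boldsymbol{a}_i^T=\boldsymbol{M}_i$, $c_i=F_{\boldsymbol{\theta}}R_{\boldsymbol{\theta}}\big(F_{\boldsymbol{\theta}}^{\gamma-1}+R_{\boldsymbol{\theta}}^{\gamma-1}\big)$ and $\partial F_{\alpha_i,\beta_i}(\tau_i,\boldsymbol{x}_i)/\partial\boldsymbol{\theta}=\boldsymbol{a}_i\,F_{\alpha_i,\beta_i}R_{\alpha_i,\beta_i}$ (from the robustness analysis above). \textbf{Step 1.} Under $G=F_{\boldsymbol{\theta}_0}$, $n_i=\sum_{k=1}^{K_i}\mathbf{1}(T_{ik}<\tau_i)$ with the indicators independent Bernoulli$(F_{\boldsymbol{\theta}_0}(\tau_i,\boldsymbol{x}_i))$, hence $\boldsymbol{U}_\gamma(\boldsymbol{\theta}_0)=\sum_{i=1}^I\sum_{k=1}^{K_i}\boldsymbol{a}_ic_i\big(F_{\boldsymbol{\theta}_0}(\tau_i,\boldsymbol{x}_i)-\mathbf{1}(T_{ik}<\tau_i)\big)$ is a centred sum of $K$ independent bounded vectors; a Lyapunov-type central limit theorem for sums of independent non-identically distributed vectors (boundedness makes the Lyapunov ratio vanish, and $K_i/K$ stabilising fixes the limiting covariance) gives $\tfrac{1}{\sqrt{K}}\boldsymbol{U}_\gamma(\boldsymbol{\theta}_0)\overset{\mathcal{L}}{\to}N\big(\boldsymbol{0}_{2(J+1)},\boldsymbol{K}_\gamma(\boldsymbol{\theta}_0)\big)$, the covariance being $\tfrac1K\sum_i K_i c_i^2 F_{\boldsymbol{\theta}_0}R_{\boldsymbol{\theta}_0}\,\boldsymbol{a}_i\boldsymbol{a}_i^T$, which coincides with the $\boldsymbol{K}_\gamma(\boldsymbol{\theta})$ of Theorem~\ref{th:asim_distr} by the identities just listed.

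\textbf{Step 2.} Since $\tilde{\boldsymbol{\theta}}_\gamma\overset{P}{\to}\boldsymbol{\theta}_0$ (part (a) of the preceding theorem), a one-term Taylor expansion yields $\tfrac{1}{\sqrt{K}}\boldsymbol{U}_\gamma(\tilde{\boldsymbol{\theta}}_\gamma)=\tfrac{1}{\sqrt{K}}\boldsymbol{U}_\gamma(\boldsymbol{\theta}_0)+\Big(\tfrac1K\,\frac{\partial\boldsymbol{U}_\gamma(\boldsymbol{\theta}^{\ast})}{\partial\boldsymbol{\theta}^T}\Big)\sqrt{K}(\tilde{\boldsymbol{\theta}}_\gamma-\boldsymbol{\theta}_0)$ for some $\boldsymbol{\theta}^{\ast}$ between $\tilde{\boldsymbol{\theta}}_\gamma$ and $\boldsymbol{\theta}_0$. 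Differentiating the product form of $\boldsymbol{U}_\gamma$, the summand in which the derivative falls on $F_{\boldsymbol{\theta}}(\tau_i,\boldsymbol{x}_i)-n_i/K_i$ is $o_P(1)$ because that factor converges to $0$ (consistency together with $\hat{p}_{i1}\to F_{\boldsymbol{\theta}_0}(\tau_i,\boldsymbol{x}_i)$), so, using $\partial F_{\alpha_i,\beta_i}/\partial\boldsymbol{\theta}=\boldsymbol{a}_iF_{\alpha_i,\beta_i}R_{\alpha_i,\beta_i}$, one gets $\tfrac1K\,\partial\boldsymbol{U}_\gamma(\boldsymbol{\theta}^{\ast})/\partial\boldsymbol{\theta}^T\overset{P}{\to}\sum_i\tfrac{K_i}{K}c_iF_{\boldsymbol{\theta}_0}R_{\boldsymbol{\theta}_0}\boldsymbol{a}_i\boldsymbol{a}_i^T=\boldsymbol{J}_\gamma(\boldsymbol{\theta}_0)$. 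Substituting the linearisation $\sqrt{K}(\tilde{\boldsymbol{\theta}}_\gamma-\boldsymbol{\theta}_0)=-\boldsymbol{P}_\gamma(\boldsymbol{\theta}_0)\tfrac{1}{\sqrt{K}}\boldsymbol{U}_\gamma(\boldsymbol{\theta}_0)+o_P(1)$ --- the representation underlying part (b) of the preceding theorem --- gives $\tfrac{1}{\sqrt{K}}\boldsymbol{U}_\gamma(\tilde{\boldsymbol{\theta}}_\gamma)=\big(\boldsymbol{I}-\boldsymbol{J}_\gamma(\boldsymbol{\theta}_0)\boldsymbol{P}_\gamma(\boldsymbol{\theta}_0)\big)\tfrac{1}{\sqrt{K}}\boldsymbol{U}_\gamma(\boldsymbol{\theta}_0)+o_P(1)$, hence asymptotic normality of $\boldsymbol{U}_\gamma(\tilde{\boldsymbol{\theta}}_\gamma)$; moreover, from $\boldsymbol{P}_\gamma=\boldsymbol{J}_\gamma^{-1}-\boldsymbol{Q}_\gamma\boldsymbol{M}^T\boldsymbol{J}_\gamma^{-1}$ and $\boldsymbol{J}_\gamma\boldsymbol{Q}_\gamma=\boldsymbol{M}(\boldsymbol{M}^T\boldsymbol{J}_\gamma^{-1}\boldsymbol{M})^{-1}$ one obtains the explicit form $\boldsymbol{I}-\boldsymbol{J}_\gamma\boldsymbol{P}_\gamma=\boldsymbol{M}(\boldsymbol{M}^T\boldsymbol{J}_\gamma^{-1}\boldsymbol{M})^{-1}\boldsymbol{M}^T\boldsymbol{J}_\gamma^{-1}$.

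\textbf{Step 3.} With $\boldsymbol{Q}_\gamma^T=(\boldsymbol{M}^T\boldsymbol{J}_\gamma^{-1}\boldsymbol{M})^{-1}\boldsymbol{M}^T\boldsymbol{J}_\gamma^{-1}$ and the expression for $\boldsymbol{I}-\boldsymbol{J}_\gamma\boldsymbol{P}_\gamma$ just obtained, the cancellation $(\boldsymbol{M}^T\boldsymbol{J}_\gamma^{-1}\boldsymbol{M})^{-1}(\boldsymbol{M}^T\boldsymbol{J}_\gamma^{-1}\boldsymbol{M})=\boldsymbol{I}_r$ produces the key identity $\boldsymbol{Q}_\gamma(\boldsymbol{\theta}_0)^T\big(\boldsymbol{I}-\boldsymbol{J}_\gamma(\boldsymbol{\theta}_0)\boldsymbol{P}_\gamma(\boldsymbol{\theta}_0)\big)=\boldsymbol{Q}_\gamma(\boldsymbol{\theta}_0)^T$. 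Hence $\boldsymbol{Q}_\gamma^T\,\tfrac{1}{\sqrt{K}}\boldsymbol{U}_\gamma(\tilde{\boldsymbol{\theta}}_\gamma)=\boldsymbol{Q}_\gamma^T\,\tfrac{1}{\sqrt{K}}\boldsymbol{U}_\gamma(\boldsymbol{\theta}_0)+o_P(1)\overset{\mathcal{L}}{\to}N\big(\boldsymbol{0}_r,\boldsymbol{Q}_\gamma^T\boldsymbol{K}_\gamma\boldsymbol{Q}_\gamma\big)$, which is a nonsingular $r$-dimensional Gaussian (since $\boldsymbol{Q}_\gamma$ has rank $r$ and $\boldsymbol{K}_\gamma\succ\boldsymbol{0}$); writing $\boldsymbol{W}_K=\boldsymbol{Q}_\gamma^T\tfrac{1}{\sqrt{K}}\boldsymbol{U}_\gamma(\tilde{\boldsymbol{\theta}}_\gamma)$, the quadratic form in the statement is exactly $\boldsymbol{W}_K^T(\boldsymbol{Q}_\gamma^T\boldsymbol{K}_\gamma\boldsymbol{Q}_\gamma)^{-1}\boldsymbol{W}_K$, so the classical fact that $\boldsymbol{Z}^T\boldsymbol{\Sigma}^{-1}\boldsymbol{Z}\sim\chi^2_r$ for $\boldsymbol{Z}\sim N(\boldsymbol{0}_r,\boldsymbol{\Sigma})$ yields the $\chi^2_r$ limit (with Slutsky's theorem to replace $\boldsymbol{Q}_\gamma,\boldsymbol{K}_\gamma$ by consistent plug-in values at $\tilde{\boldsymbol{\theta}}_\gamma$ in the data-based statistic).

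The genuinely delicate step is Step 2: controlling the random intermediate point $\boldsymbol{\theta}^{\ast}$ uniformly on a neighbourhood of $\boldsymbol{\theta}_0$ and, above all, rigorously establishing the linearisation $\sqrt{K}(\tilde{\boldsymbol{\theta}}_\gamma-\boldsymbol{\theta}_0)=-\boldsymbol{P}_\gamma(\boldsymbol{\theta}_0)\tfrac{1}{\sqrt{K}}\boldsymbol{U}_\gamma(\boldsymbol{\theta}_0)+o_P(1)$, which requires showing that the (suitably scaled) Lagrange multiplier in the RWMDPDE system is $O_P(1)$ and that the second-order Taylor remainder of the constrained estimating equations is negligible. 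This is precisely the independent, non-identically distributed RWMDPDE theory of Martin (2023) (generalising Basu et al.\ (2022)); the cleanest route is to invoke that theory for the one-shot device setting and then substitute the closed forms of $\boldsymbol{U}_\gamma$, $\boldsymbol{J}_\gamma$, $\boldsymbol{K}_\gamma$ and $\boldsymbol{Q}_\gamma$ established in the earlier results, after which Steps 1 and 3 reduce to bookkeeping.
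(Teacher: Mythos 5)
Your proof is correct and reaches the stated $\chi^2_r$ limit, but it follows a more elaborate route than the paper. The paper's own proof is essentially your Step 1 only: it uses $n_i\sim\mathrm{Bin}(K_i,F_{\boldsymbol{\theta}_0}(\tau_i,\boldsymbol{x}_i))$ to check $\mathbb{E}[\boldsymbol{U}_\gamma(\boldsymbol{\theta}_0)]=\boldsymbol{0}$ and $\mathrm{Var}(\boldsymbol{U}_\gamma(\boldsymbol{\theta}_0))=K\,\boldsymbol{K}_\gamma(\boldsymbol{\theta}_0)$ (the same algebra you carry out via $c_i=F_{\boldsymbol{\theta}}R_{\boldsymbol{\theta}}(F_{\boldsymbol{\theta}}^{\gamma-1}+R_{\boldsymbol{\theta}}^{\gamma-1})$ and $\boldsymbol{a}_i\boldsymbol{a}_i^T=\boldsymbol{M}_i$), asserts the normal limit, and then concludes the chi-square law from $\boldsymbol{Q}_\gamma^T\boldsymbol{K}_\gamma\boldsymbol{Q}_\gamma\bigl(\boldsymbol{Q}_\gamma^T\boldsymbol{K}_\gamma\boldsymbol{Q}_\gamma\bigr)^{-1}=\boldsymbol{I}$; the replacement of $\boldsymbol{\theta}_0$ by $\tilde{\boldsymbol{\theta}}_\gamma$ is not argued at all. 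Your Steps 2--3 supply exactly that missing transfer: the expansion $\tfrac{1}{\sqrt K}\boldsymbol{U}_\gamma(\tilde{\boldsymbol{\theta}}_\gamma)=(\boldsymbol{I}-\boldsymbol{J}_\gamma\boldsymbol{P}_\gamma)\tfrac{1}{\sqrt K}\boldsymbol{U}_\gamma(\boldsymbol{\theta}_0)+o_P(1)$, leaning on the Martin (2023)/Basu et al. linearisation of the RWMDPDE (which the paper itself invokes for the preceding theorem), combined with the projection identity $\boldsymbol{Q}_\gamma^T(\boldsymbol{I}-\boldsymbol{J}_\gamma\boldsymbol{P}_\gamma)=\boldsymbol{Q}_\gamma^T$, which you verify correctly. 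Two remarks. First, your normalisation $\tfrac{1}{\sqrt K}\boldsymbol{U}_\gamma$ rather than $\sqrt K\,\boldsymbol{U}_\gamma$ is the one actually consistent with the variance $K\,\boldsymbol{K}_\gamma$ computed in the paper (the proposition's $\sqrt K$ presupposes the $1/K$-weighted version of the estimating function in (\ref{eq:u})), so there is no substantive discrepancy. Second, your linearisation in fact shows that the asymptotic covariance of the score at the restricted estimator is the singular projected matrix $(\boldsymbol{I}-\boldsymbol{J}_\gamma\boldsymbol{P}_\gamma)\boldsymbol{K}_\gamma(\boldsymbol{I}-\boldsymbol{J}_\gamma\boldsymbol{P}_\gamma)^T$ rather than $\boldsymbol{K}_\gamma$ itself, so you establish (and in effect sharpen) the first display only up to this covariance; the $\chi^2_r$ conclusion is unaffected precisely because $\boldsymbol{Q}_\gamma^T(\boldsymbol{I}-\boldsymbol{J}_\gamma\boldsymbol{P}_\gamma)=\boldsymbol{Q}_\gamma^T$. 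What the paper's route buys is brevity; what yours buys is a rigorous treatment of the only delicate point, at the cost of importing the restricted-estimator expansion.
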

\begin{proof}(See Appendix: Section \ref{sec:proof:result:rao})
	
\end{proof}
According to this result, we define the Rao-type test statistics for testing the null hypothesis (\ref{eq:H0Wald}).
\begin{definition}
	The Rao-type test statistics for testing (\ref{eq:H0Wald}) and based on the RWMDPDPE is defined by
	\begin{equation}
		R_K(\tilde{\boldsymbol{\theta}}_{\gamma}) = K \boldsymbol{U}_\gamma^T (\tilde{\boldsymbol{\theta}}_{\gamma}) \boldsymbol{Q}_\gamma (\tilde{\boldsymbol{\theta}}_{\gamma}) \left( \boldsymbol{Q}_\gamma^T (\tilde{\boldsymbol{\theta}}_{\gamma}) \boldsymbol{K}_\gamma (\tilde{\boldsymbol{\theta}}_{\gamma}) \boldsymbol{Q}_\gamma (\tilde{\boldsymbol{\theta}}_{\gamma}) \right)^{-1} \boldsymbol{Q}_\gamma^T (\tilde{\boldsymbol{\theta}}_{\gamma}) \boldsymbol{U}_\gamma (\tilde{\boldsymbol{\theta}}_{\gamma}).
		\label{def:Rao} 
	\end{equation}
\end{definition}
\begin{remark}
	The above expression depends only on the restricted estimator, so it is not necessary to compute unrestricted estimators. Therefore, for simple null hypotheses, \( H_0 : \boldsymbol{\theta} = \boldsymbol{\theta}_0 \), no estimator is needed to define the test, which is one of its most significant advantages.
\end{remark}

The following theorem provides the asymptotic distribution of the Rao-type test statistics $R_K(\tilde{\boldsymbol{\theta}}_\gamma)$.
\begin{theorem}
	The asymptotic distribution of the Rao-type test statistics for testing (\ref{eq:H0Wald}), $R_K(\tilde{\boldsymbol{\theta}}_\gamma)$, defined in equation (\ref{def:Rao}), is given by
	\begin{equation}
		R_K(\tilde{\boldsymbol{\theta}}_\gamma) \underset{K \rightarrow \infty}{\overset{ \mathcal{L}}{\to}}  \chi_r^2. 
		\label{distrRao}
	\end{equation}
\end{theorem}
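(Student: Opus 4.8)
The plan is to deduce (\ref{distrRao}) from Proposition \ref{result:rao} by a Slutsky-type substitution argument, replacing the matrices evaluated at the true parameter $\boldsymbol{\theta}_0$ by their counterparts evaluated at the restricted estimator $\tilde{\boldsymbol{\theta}}_\gamma$. Write $\boldsymbol{Z}_K = \sqrt{K}\,\boldsymbol{U}_\gamma(\tilde{\boldsymbol{\theta}}_\gamma)$; by the first assertion of Proposition \ref{result:rao}, $\boldsymbol{Z}_K \overset{\mathcal{L}}{\to} N\bigl(\boldsymbol{0}_{2(J+1)}, \boldsymbol{K}_\gamma(\boldsymbol{\theta}_0)\bigr)$, so in particular $\boldsymbol{Z}_K = O_P(1)$. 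With this notation the statistic in (\ref{def:Rao}) reads $R_K(\tilde{\boldsymbol{\theta}}_\gamma) = \boldsymbol{Z}_K^T \boldsymbol{A}_K \boldsymbol{Z}_K$, where $\boldsymbol{A}_K = \boldsymbol{Q}_\gamma(\tilde{\boldsymbol{\theta}}_\gamma)\bigl(\boldsymbol{Q}_\gamma^T(\tilde{\boldsymbol{\theta}}_\gamma)\boldsymbol{K}_\gamma(\tilde{\boldsymbol{\theta}}_\gamma)\boldsymbol{Q}_\gamma(\tilde{\boldsymbol{\theta}}_\gamma)\bigr)^{-1}\boldsymbol{Q}_\gamma^T(\tilde{\boldsymbol{\theta}}_\gamma)$, and the corresponding form with $\boldsymbol{A}_0$ (the same expression with $\tilde{\boldsymbol{\theta}}_\gamma$ replaced by $\boldsymbol{\theta}_0$) is exactly the quadratic form shown in Proposition \ref{result:rao} to converge in law to $\chi_r^2$.

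First I would establish $\boldsymbol{A}_K \overset{P}{\to} \boldsymbol{A}_0$. By the consistency part of the theorem on the RWMDPDE, $\tilde{\boldsymbol{\theta}}_\gamma \overset{P}{\to} \boldsymbol{\theta}_0$. The maps $\boldsymbol{\theta} \mapsto \boldsymbol{J}_\gamma(\boldsymbol{\theta})$ and $\boldsymbol{\theta} \mapsto \boldsymbol{K}_\gamma(\boldsymbol{\theta})$ are continuous on a neighbourhood of $\boldsymbol{\theta}_0$: from the closed-form log-logistic expressions in Theorem \ref{th:asim_distr} they are rational combinations of $\alpha_i,\beta_i,\tau_i^{\beta_i},\log(\tau_i/\alpha_i)$ with $F_{\alpha_i,\beta_i}(\tau_i,\boldsymbol{x}_i),R_{\alpha_i,\beta_i}(\tau_i,\boldsymbol{x}_i)\in(0,1)$, and $\alpha_i,\beta_i$ are smooth in $\boldsymbol{\theta}$ through the log-linear links; $\boldsymbol{\theta}\mapsto\boldsymbol{M}(\boldsymbol{\theta})$ is continuous by assumption. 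Under the regularity conditions $\boldsymbol{J}_\gamma(\boldsymbol{\theta}_0)$ is nonsingular and $\operatorname{rank}\boldsymbol{M}(\boldsymbol{\theta}_0)=r$, hence $\boldsymbol{M}^T(\boldsymbol{\theta}_0)\boldsymbol{J}_\gamma^{-1}(\boldsymbol{\theta}_0)\boldsymbol{M}(\boldsymbol{\theta}_0)$ is invertible, so $\boldsymbol{Q}_\gamma(\cdot)$ is continuous at $\boldsymbol{\theta}_0$ and $\boldsymbol{Q}_\gamma^T(\boldsymbol{\theta}_0)\boldsymbol{K}_\gamma(\boldsymbol{\theta}_0)\boldsymbol{Q}_\gamma(\boldsymbol{\theta}_0)$ is an invertible $r\times r$ matrix. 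Since matrix inversion is continuous at nonsingular matrices, the continuous mapping theorem gives $\boldsymbol{Q}_\gamma(\tilde{\boldsymbol{\theta}}_\gamma)\overset{P}{\to}\boldsymbol{Q}_\gamma(\boldsymbol{\theta}_0)$, $\boldsymbol{K}_\gamma(\tilde{\boldsymbol{\theta}}_\gamma)\overset{P}{\to}\boldsymbol{K}_\gamma(\boldsymbol{\theta}_0)$, and hence $\boldsymbol{A}_K\overset{P}{\to}\boldsymbol{A}_0$.

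Then I would decompose $R_K(\tilde{\boldsymbol{\theta}}_\gamma) = \boldsymbol{Z}_K^T\boldsymbol{A}_0\boldsymbol{Z}_K + \boldsymbol{Z}_K^T(\boldsymbol{A}_K-\boldsymbol{A}_0)\boldsymbol{Z}_K$. The remainder term is $o_P(1)$, since it is bounded by $\|\boldsymbol{A}_K-\boldsymbol{A}_0\|\,\|\boldsymbol{Z}_K\|^2$ with $\|\boldsymbol{Z}_K\|^2=O_P(1)$ and $\|\boldsymbol{A}_K-\boldsymbol{A}_0\|=o_P(1)$. The leading term $\boldsymbol{Z}_K^T\boldsymbol{A}_0\boldsymbol{Z}_K$ converges in law to $\chi_r^2$ by Proposition \ref{result:rao}; equivalently, one checks directly that $\boldsymbol{A}_0\boldsymbol{K}_\gamma(\boldsymbol{\theta}_0)$ is idempotent with $\operatorname{tr}\bigl(\boldsymbol{A}_0\boldsymbol{K}_\gamma(\boldsymbol{\theta}_0)\bigr)=r$, so the standard quadratic-form lemma applied to $\boldsymbol{Z}_K\overset{\mathcal{L}}{\to}N(\boldsymbol{0},\boldsymbol{K}_\gamma(\boldsymbol{\theta}_0))$ yields the $\chi_r^2$ limit. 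Combining the two terms by Slutsky's theorem gives $R_K(\tilde{\boldsymbol{\theta}}_\gamma)\overset{\mathcal{L}}{\to}\chi_r^2$, which is (\ref{distrRao}).

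The substantive content is carried by Proposition \ref{result:rao}; the step I expect to require the most care is the rank/nonsingularity bookkeeping, namely verifying that under $H_0$ with $\boldsymbol{\theta}_0\in\Theta_0$ the $r\times r$ matrix $\boldsymbol{Q}_\gamma^T(\boldsymbol{\theta}_0)\boldsymbol{K}_\gamma(\boldsymbol{\theta}_0)\boldsymbol{Q}_\gamma(\boldsymbol{\theta}_0)$ is invertible (this needs positive definiteness of $\boldsymbol{K}_\gamma(\boldsymbol{\theta}_0)$ together with $\operatorname{rank}\boldsymbol{Q}_\gamma(\boldsymbol{\theta}_0)=r$, which in turn follows from $\operatorname{rank}\boldsymbol{M}(\boldsymbol{\theta}_0)=r$ and nonsingularity of $\boldsymbol{J}_\gamma(\boldsymbol{\theta}_0)$), and that these properties persist under the perturbation of $\boldsymbol{\theta}_0$ to $\tilde{\boldsymbol{\theta}}_\gamma$, so that $\boldsymbol{A}_K$ is well defined with probability tending to one.
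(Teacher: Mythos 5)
Your proposal is correct and follows essentially the same route as the paper, whose proof consists of a one-line appeal to Proposition \ref{result:rao}: the $\chi^2_r$ limit there, combined with replacing $\boldsymbol{\theta}_0$ by $\tilde{\boldsymbol{\theta}}_\gamma$ in the weighting matrices. Your Slutsky/continuity argument (consistency of $\tilde{\boldsymbol{\theta}}_\gamma$, continuity of $\boldsymbol{K}_\gamma$, $\boldsymbol{Q}_\gamma$ and of matrix inversion, plus the idempotency-and-trace check for the quadratic form) simply makes explicit the bookkeeping the paper leaves implicit.
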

\begin{proof}
	The proof is derived from the results presented in Proposition \ref{result:rao}.
\end{proof}
The Rao test is consistent, meaning that
\begin{equation}
	\lim_{K \to \infty} \Pr\left(R_K(\tilde{\boldsymbol{\theta}}_\gamma) > \chi_{r,\alpha}^2\right) = 1,
\end{equation}
According to (\ref{distrRao}), we reject the null hypothesis stated in (\ref{eq:H0Wald}), at level $\alpha$, if
\begin{equation}
	R_K(\tilde{\boldsymbol{\theta}}_\gamma) > \chi_{r,\alpha}^2 \text{ .}
\end{equation}

\section{Numerical Analysis}

In this section, we will perform a numerical analysis to illustrate the behavior of the estimators under different conditions. We will carry out Monte Carlo simulations and use real data to evaluate and compare the efficiency and robustness of the WMDPDE estimators against the MLE. The analysis will focus on studying the errors induced by contaminating the samples, as well as calculating the empirical levels obtained from the Wald-type test statistics.

\subsection{Monte Carlo Simulation}
In this section, we perform a Monte Carlo simulation with 3000 iterations to assess the behavior of the WMDPDEs. We aim to demonstrate that the WMDPDE estimators studied in the previous sections are more robust than the MLE in the presence of outliers or atypical values.

We consider one ( \( J = 1 \) ) stress factor and \( I = 9 \) testing conditions derived from the combination of the normalized stress factor \( x \in \{0, 0.5, 1\} \) and inspection times \( \tau \in \{1, 1.5, 2.5\} \), with \( K_i = 100 \) devices for each testing condition \( i = 1, \ldots, I \), as shown in Table \ref{tabla:datos_dispositivos_modificada}.
\begin{table}[h]
	\centering
	\scriptsize
	\captionsetup{justification=centering, skip=5pt, position=above}
	\caption{Model Data with Inspection Times, Stress Factor, Failures, and Devices.}
	\begin{tabular}{|c|c|c|c|c|}
		\hline
		\textbf{Condition} & \textbf{Inspection Time (\( \tau_i \))} & \textbf{Stress Factor (\( x_i \))} & \textbf{Failures (\( n_i \))} & \textbf{Devices (\( K_i \))} \\
		\hline
		1 & 1 & 0   & $n_1$ & 100 \\
		2 & 1 & 0.5 & $n_2$ & 100 \\
		3 & 1 & 1   & $n_3$ & 100 \\
		4 & 1.5 & 0   & $n_4$ & 100 \\
		5 & 1.5 & 0.5 & $n_5$ & 100 \\
		6 & 1.5 & 1   & $n_6$ & 100 \\
		7 & 2.5 & 0   & $n_7$ & 100 \\
		8 & 2.5 & 0.5 & $n_8$ & 100 \\
		9 & 2.5 & 1   & $n_9$ & 100 \\
		\hline
	\end{tabular}
	
	\label{tabla:datos_dispositivos_modificada}
\end{table}
The failures are simulated with \( \boldsymbol{\theta} = (a_0, a_1, b_0, b_1) = (1, -0.5, 0.8, 0.4) \), meaning that failures in each testing condition are generated according to a log-logistic distribution with parameters \( \alpha_i \) and \( \beta_i \), as described in Table \ref{tabla:alpha_beta}.
\begin{table}[h]
	\centering
	\scriptsize
	\captionsetup{justification=centering, skip=5pt, position=above}
	\caption{Values of \(\alpha_i\) and \(\beta_i\) for each Testing Condition.}
	\begin{tabular}{|c|c|c|}
		\hline
		\textbf{Condition ( \(i\) )} & \(\alpha_i\) & \(\beta_i\) \\
		\hline
		1 & 2.718282 & 2.225541 \\
		2 & 2.117000 & 2.718282 \\
		3 & 1.648721 & 3.320117 \\
		4 & 2.718282 & 2.225541 \\
		5 & 2.117000 & 2.718282 \\
		6 & 1.648721 & 3.320117 \\
		7 & 2.718282 & 2.225541 \\
		8 & 2.117000 & 2.718282 \\
		9 & 1.648721 & 3.320117 \\
		\hline
	\end{tabular}
	\label{tabla:alpha_beta}
\end{table}
In order to simulate the sample of failures \( \{n_{i1}, \dots, n_{iJ}\} \) under conditions \((x_{ij}, \tau_j)\), the random variable \( X_{ij} \) is defined as:
\[
X_{ij} =
\begin{cases}
	1 & \text{if the lifetime } > \tau_j \text{ (SUCCESS)} \\
	0 & \text{if the lifetime } \leq \tau_j \text{ (FAILURE)}
\end{cases}
\sim \text{Be}(F_{\boldsymbol{\theta}}(\tau_{i}, \boldsymbol{x}_{i})).
\]
Considering independent and identically distributed random variables:
\[
N_{ij} = \sum_{t=1}^{K_i} X_{i,j}^t \sim \text{Bi}(K_i, F_{\boldsymbol{\theta}}(\tau_{i}, \boldsymbol{x}_{i})).
\]

We contaminate the sample by progressively decreasing the third parameter of \( \boldsymbol{\theta} = (1, -0.5, 0.8, 0.4) \) value when simulating failures for the first testing condition. The contamination degree applied to the parameter is calculated as the relative deviation of the contaminated parameter value \( \boldsymbol{\theta}_{\text{contaminated}} \) from the original value \( \boldsymbol{\theta} \):
\[
\text{Contamination Degree} = \frac{\lvert \boldsymbol{\theta}_{\text{contaminated}} - \boldsymbol{\theta} \rvert}{\lvert \boldsymbol{\theta} \rvert}.
\]
To evaluate and compare the estimations, the Root Mean Squared Error (RMSE) is used for each estimated parameter:
\[
\text{RMSE}_j = \sqrt{\frac{1}{2} (\hat{\boldsymbol{\theta}}_j - \boldsymbol{\theta}_j )^2 }.
\]
\begin{figure}[H]
	\centering
	\includegraphics[width=1\linewidth]{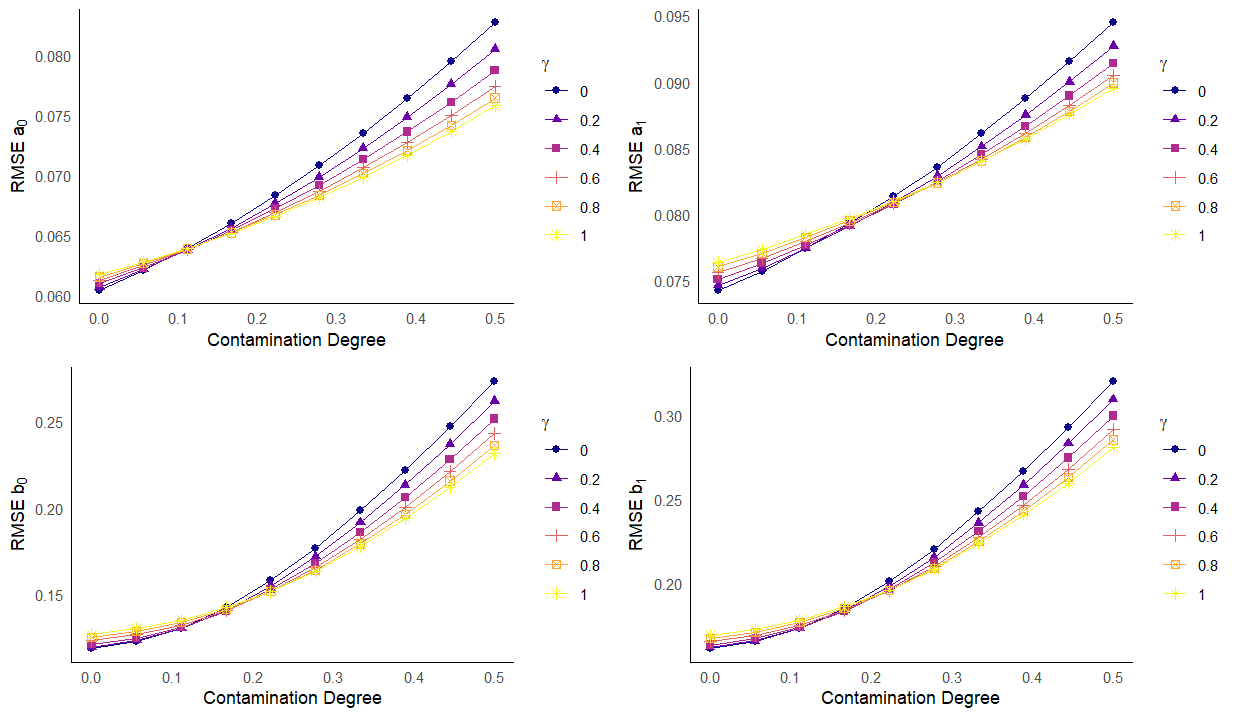}
	\caption{Root mean square errors of parameter estimates under contaminated data for different $\gamma$ values}
	\label{fig:rmse}
\end{figure}

As shown in Figure \ref{fig:rmse}, without contamination, the MLE presents lower errors. However, as the contamination degree increases, a trade-off between efficiency and robustness is observed. Based on the obtained results, we note that the MLE demonstrates high efficiency in the absence of outliers but lacks robustness when outliers are present in the data. On the other hand, the WMDPDE estimators show lower errors than the MLE in the presence of outliers while remaining efficient in their absence, because the error is not significantly higher compared to the MLE.

\subsubsection*{Robustness of Wald-type test statistics}

\begin{figure}[h]
	\centering
	\includegraphics[width=1\linewidth]{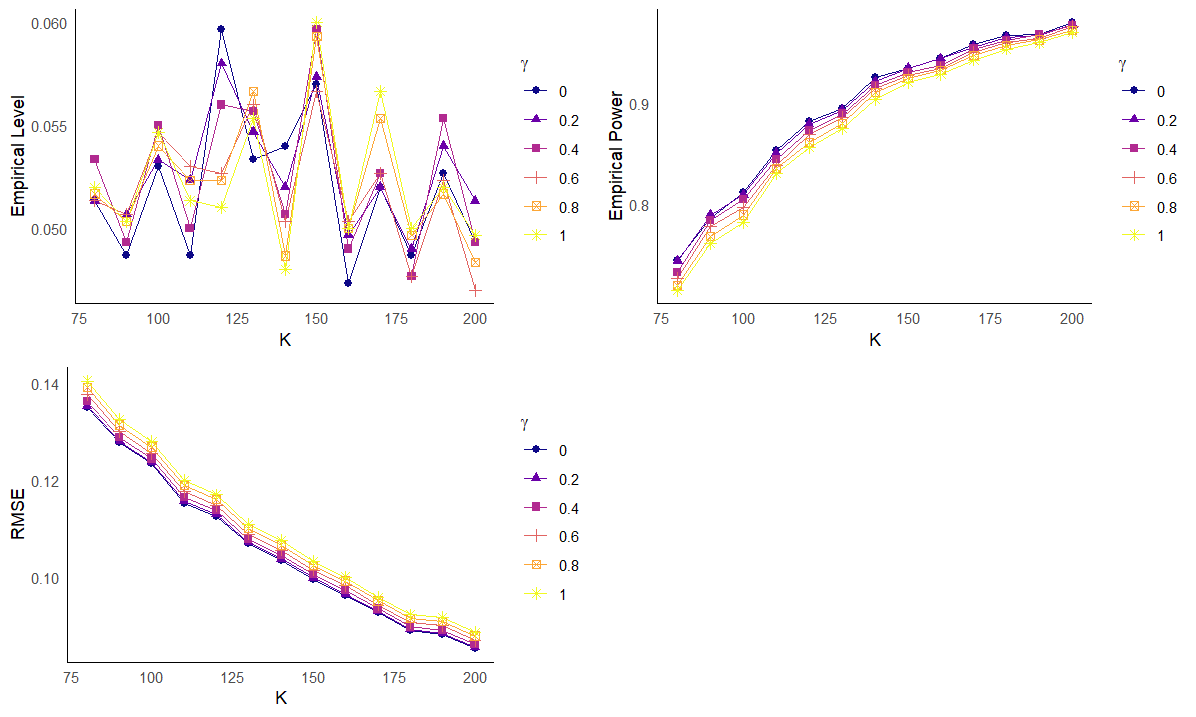}
	\caption{RMSE (bottom left panel), Empirical Level (top left) and Empirical Power (top right) under pure data for different numbers of devices (K)}
	\label{fig:dispositivos}
\end{figure}

\begin{figure}[h]
	\centering
	\includegraphics[width=1\linewidth]{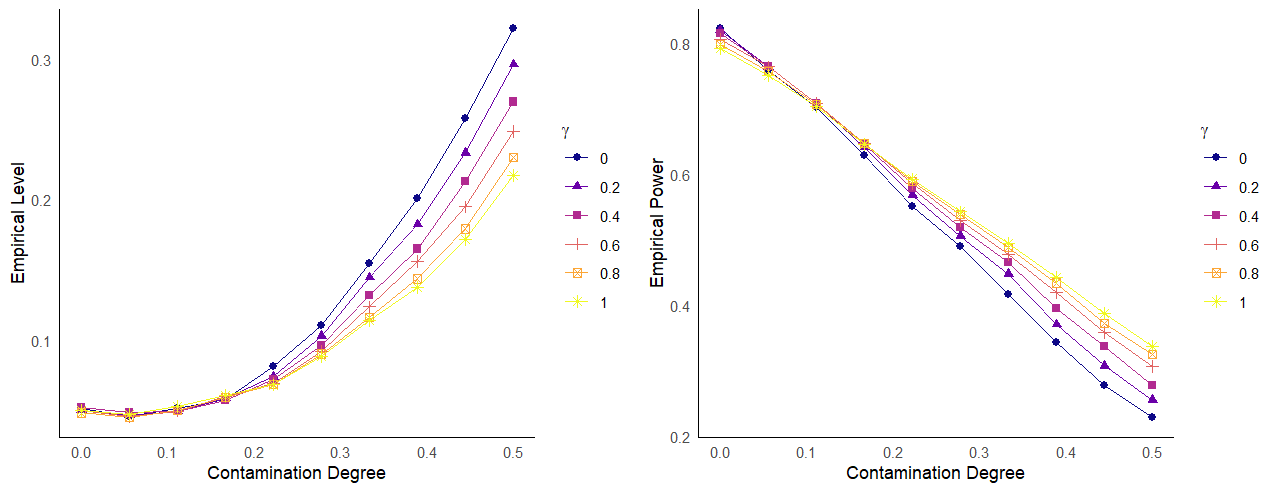}
	\caption{Wald-type tests empirical levels and empirical powers under contaminated data for different $\gamma$ values}
	\label{fig:wald}
\end{figure}
Let us now evaluate the robustness of the Wald-type tests statistic based on the previously developed WMDPDE. A Monte Carlo simulation of size 3000 is performed under the same conditions as described earlier, focusing on the empirical significance level. This level quantifies the probability of committing a Type I error, that is, rejecting the null hypothesis when it is true. The significance level is determined by the proportion of test statistics whose absolute value exceeds the critical value corresponding to a 95\% confidence level, following a \(\chi^2\) distribution with one degree of freedom. The true null hypothesis \(H_0 : b_0 = 0.8\) is tested against the alternative \(H_1 : b_0 \neq 0.8\). Contamination is introduced by perturbing this parameter in the first testing condition following the previously described procedure.

Figure \ref{fig:dispositivos} presents the mean empirical levels, empirical power and root mean square errors calculated for different values of \(\gamma\) and devices under pure data. In this case, the MLE when $\gamma=0$ presents the lowest root mean square error for each sample size, and the highest empirical power. Figure \ref{fig:wald} presents the mean empirical levels calculated for different values of \(\gamma\) and degrees of contamination. It can be observed that, for a given sample, the p-value for rejecting the null hypothesis remains similar across \(\gamma\) values (around 0.05) at low contamination levels. However, beyond 20\% contamination, the MLE (\(\gamma = 0\)) begins to diverge to higher levels. In contrast, the level remains more stable for higher values of \(\gamma\). This reveals that WMDPDEs with higher \(\gamma\) values are less sensitive to sample contamination, reducing the frequency of rejecting the null hypothesis compared to the MLE.
On the other hand, the empirical powers have been calculated under the alternative hypothesis \(H_1 : b_0 = 0.35\). It is observed that the empirical power is similar, although slightly higher for the MLE in the absence of contamination, and it progressively decreases until the curves invert, with the empirical power being higher for the WMDPDE. These results highlight the robustness of the Wald tests based on the WMDPDE compared to the one based on the MLE for one-shot devices with lifetimes under the log-logistic distribution.

\subsection{Real Data Analysis}

In this section we present a numerical example to illustrate the estimators developed. Let us focus on the real dataset from Fan (2009), an experiment that studies the behavior of 90 electroexplosive devices under different temperature levels and inspection times. Table \ref{tabla:fallos_electroexplosivos} summarizes the results obtained in the experiment for each inspection time $\tau_i = (10,20,30)$ and each temperature (stress factor) expressed in Kelvin degrees, where $x = \frac{1}{\text{Temp}} = \left(\frac{1}{308}, \frac{1}{318}, \frac{1}{328}\right)$.

\vspace{3mm}

\begin{table}[h]
	\centering
	\scriptsize
	\caption{Failures observed in electroexplosive devices under CSALTs for each temperature and inspection time.}
	\begin{tabular}{|c|c|c|c|c|}
		\hline
		\textbf{Group} & \textbf{Inspection Time} & \textbf{Temperature (K)} & \textbf{Number of Devices} & \textbf{Failures} \\ \hline
		1 & 10 & 308 & 10 & 3 \\ 
		2 & 10 & 318 & 10 & 1 \\
		3 & 10 & 328 & 10 & 6 \\ 
		4 & 20 & 308 & 10 & 3 \\ 
		5 & 20 & 318 & 10 & 7 \\ 
		6 & 20 & 328 & 10 & 7 \\ 
		7 & 30 & 308 & 10 & 7 \\ 
		8 & 30 & 318 & 10 & 7 \\ 
		9 & 30 & 328 & 10 & 9 \\  \hline
	\end{tabular}
	\label{tabla:fallos_electroexplosivos}
\end{table}

Table \ref{tabla:theta_y_media_resumida} presents the estimates obtained for $\gamma \in$ \{0, 0.1, 0.2, 0.3, 0.4, 0.5, 0.6, 0.7, 0.8, 0.9, 1\}, as well as the mean lifetimes, which are calculated as:
\begin{align*}
	\text{E[$T_{ik}$]} &=  \frac{\alpha_i \cdot \pi / \beta_i}{\sin(\pi / \beta_i)} = \frac{\left(\exp \left( \sum_{j=0}^{J} a_j x_{ij} \right) \cdot \pi\right) / \exp \left( \sum_{j=0}^{J} b_j x_{ij} \right)}{\sin \left(\pi / \exp \left( \sum_{j=0}^{J} b_j x_{ij} \right) \right)}.
\end{align*}
\begin{figure}[h]
	\centering
	\includegraphics[width=0.65\linewidth]{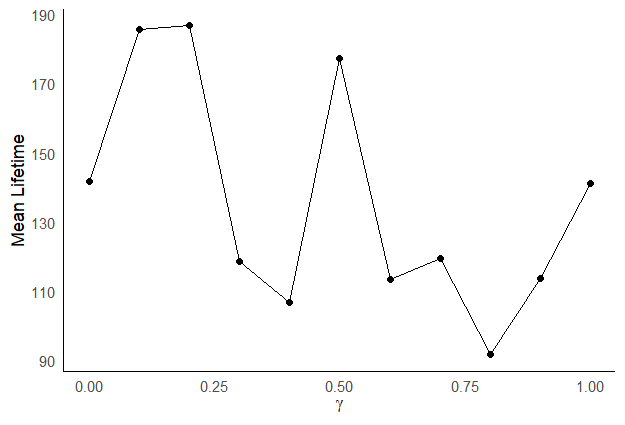}
	\caption{Estimated mean lifetime until failure for different values of $\gamma$ under normal temperature conditions ($x=\frac{1}{298}$)}
	\label{fig:est_condnormales}
\end{figure}
As expected, the estimated mean lifetime tends to decrease as the temperature increases. Additionally, when analyzing the values obtained for each temperature level, there is no clear increasing or decreasing trend based on \(\gamma\) within each level. For instance, at the lowest temperature, the maximum lifetime is achieved with \(\gamma = 0.2\), while the minimum is observed at \(\gamma = 0.8\). This behavior suggests that the estimates are not significantly influenced by outliers.
\vspace{3mm}
\begin{table}[h]
	\centering
	\scriptsize
	\caption{WMDPDE estimates and mean lifetimes for different values of $\gamma$.}
	\begin{tabular}{|c|c|c|c|c|c|c|c|}
		\hline
		\textbf{$\gamma$} & \textbf{$\hat{a_0}$} & \textbf{$\hat{a_1}$} & \textbf{$\hat{b_0}$} & \textbf{$\hat{b_1}$} & \textbf{E[$T_{x=\frac{1}{308}}$]} & \textbf{E[$T_{x=\frac{1}{318}}$]} & \textbf{E[$T_{x=\frac{1}{328}}$]} \\ \hline
		0.0 & -10.6674 & 4291.109 & 4.3174 & -1202.5600 & 62.2661 & 32.1963 & 18.3781 \\ \hline
		0.1 & -11.0716 & 4421.303 & 5.0544 & -1440.1036 & 68.5913 & 32.8659 & 17.9939 \\ \hline
		0.2 & -11.1037 & 4430.991 & 5.0508 & -1439.4333 & 68.7738 & 32.8853 & 17.9770 \\ \hline
		0.3 & -10.4249 & 4213.497 & 3.7787 & -1026.4958 & 57.6527 & 31.4461 & 18.5268 \\ \hline
		0.4 & -10.0718 & 4101.951 & 3.5032 & -935.2010  & 54.9180 & 31.0031 & 18.7098 \\ \hline
		0.5 & -11.0531 & 4415.347 & 5.0611 & -1439.0802 & 66.9270 & 32.3657 & 17.8135 \\ \hline
		0.6 & -10.7194 & 4306.893 & 3.4816 & -931.1694  & 56.7897 & 31.2977 & 18.4795 \\ \hline
		0.7 & -10.8501 & 4348.726 & 3.6965 & -999.8137  & 57.9620 & 31.3720 & 18.3071 \\ \hline
		0.8 & -9.7569  & 4000.748 & 2.9593 & -756.5985  & 50.8951 & 30.1621 & 18.8177 \\ \hline
		0.9 & -10.9808 & 4390.145 & 3.4353 & -915.5106  & 56.8226 & 31.1828 & 18.3119 \\ \hline
		1.0 & -11.2055 & 4462.832 & 4.2951 & -1192.1166 & 61.9521 & 31.7602 & 17.9254 \\ \hline
	\end{tabular}
	\label{tabla:theta_y_media_resumida}
\end{table}
\vspace{2mm}

Moreover, Figure \ref{fig:est_condnormales} illustrates the estimated mean lifetimes under normal conditions, where the temperature is 298K. The resulting mean lifetime is higher, as the temperature is lower than the stress levels considered in the experiment.
\newpage
\textbf{Goodness-of-Fit Test}
To verify whether it is appropriate to assume a log-logistic distribution for the lifetimes of the devices or not, a goodness-of-fit test based on the chi-square statistic is performed. The total sample size, observed data, and theoretical probabilities are:

\begin{itemize}
	\item Sample size:
	\(
	N = \sum_{i=1}^9 K_i = 90.
	\)
	\item Observed frequencies (\(n_i, K-n_i\)): 
	\( \{3, 3, 7, 1, 5, 7, 6, 7, 9, 7, 7, 3, 9, 5, 3, 4, 3, 1\} \).
	\item Theoretical probabilities (\(p_i, 1-p_i\)): \( 
	\{ \) 0.0210, 0.0444, 0.0613, 0.0322, 0.0635, 0.0808, 0.0496, 0.0836, 0.0965, 0.0901, 0.0667, 0.0498, 0.0789, 0.0476, 0.0303, 0.0615, 0.0275, 0.0146 \( \}.
	\)
\end{itemize}

The null hypothesis (\(H_0\)) assumes that the lifetimes follow a log-logistic distribution, where the theoretical probabilities are calculated based on  
$\boldsymbol{\theta} = (-10.6674, 4291.109, 4.3174, -1202.5600)$ corresponding to the MLE. The chi-square statistic is defined as:
\(
T = \sum_{i=1}^{18} \frac{(n_i - N \cdot p_i)^2}{N \cdot p_i}.
\)

Additionally, the degrees of freedom (\(df\)) are calculated as:
\(
df = k - (p - 1) = 18 - (4-1) = 15,
\)
where \(k = 18\) refers to the number of categories and \(p=4\) to the number of estimated parameters.

The calculated chi-square statistic is:
\[
T = \sum_{i=1}^{18} \frac{(n_i - N \cdot p_i)^2}{N \cdot p_i} = 5.297895,
\]
and the associated p-value is
\(
p = 1 - P(X \leq T) = 0.9893,
\)
where \(X\) follows a \(\chi^2\) distribution with 15 degrees of freedom.

Since \(T = 5.297895 < q_{\chi^2}(0.95, 15) = 24.995790\) and \(p = 0.9893 > 0.05\), the null hypothesis \(H_0\) is not rejected. This indicates that the observed data are consistent with the hypothesis that the underlying distribution of the lifetimes is log-logistic. Using the same procedure with the theoretical probabilities obtained from the estimations for $\gamma > 0$, the resulting statistics and p-values, all close to 1, are obtained.

\begin{table}[h]
	\centering
	\scriptsize
	\caption{Chi-Square Statistics (T) and p-values for each $\gamma$ estimation.}
	\begin{tabular}{|c|c|c|c|c|c|c|c|c|c|c|c|}
		\hline
		\textbf{$\gamma$} & 0.0 & 0.1 & 0.2 & 0.3 & 0.4 & 0.5 & 0.6 & 0.7 & 0.8 & 0.9 & 1.0 \\ 
		\hline
		\textbf{T} & 5.2979 & 5.2673 & 5.2636 & 5.3504 & 5.3936 & 5.2842 & 5.3895 & 5.3800 & 5.4680 & 5.4204 & 5.3546 \\ 
		\hline
		\textbf{p-value} & 0.9968 & 0.9969 & 0.9969 & 0.9966 & 0.9964 & 0.9968 & 0.9964 & 0.9965 & 0.9961 & 0.9963 & 0.9966 \\ 
		\hline
	\end{tabular}
	\label{tab:chi_squared_p_values}
\end{table}

\section{Concluding Remarks}
In this paper, robust estimators and test statistics for one-shot device data under log-logistic lifetime distortions have been developed. Through both theoretical and empirical analyses, the robustness of the WMDPDE has been demonstrated. The performance of the WMDPDE has been compared to the classical MLE in terms of efficiency and robustness, showing an appealing gain in robustness with a small loss of efficiency. Noteworthy, the WDPDE depends on a tuning parameter\(\gamma\) controlling the trade-off between robustness and efficiency in the estimation; the greater \(\gamma\), the more robust but less efficient the estimator will be.
Our simulation results indicate that moderate values of the tuning parameter offer a significant advantage in terms of robustness when dealing with data contamination, illustrating the practical utility of the methods.

\section*{Fundings}
This work was supported by the Spanish Grants PID2021-124933NB-I00 and FPU/018240  and Natural Sciences and Engineering Research Council of Canada (of the first author) through an Individual Discovery Grant (No. 20013416) and a Departmental Collaboration Scholarship of M. González. . M. Jaenada and L. Pardo are members of the Interdisciplinary Mathematics Institute (IMI).

\appendix
\section{Appendix: Proofs of the main results}

\subsection{Proof of Theorem \ref{th:ecuaciones}}
\label{suplmaterial_1}
We have,
\begin{equation*}
	\begin{aligned}
		\frac{\partial }{\partial \boldsymbol{\theta}} \sum_{i=1}^{I} \frac{K_i}{K} D^*_\gamma (\hat{\boldsymbol{p_i}}, \boldsymbol{\pi}_i(\boldsymbol{\theta})) = \boldsymbol{\mathbf{0}}_{2(J+1)} \text{ .}
	\end{aligned}
\end{equation*}
Thus,
\begin{equation*}
	\frac{\partial }{\partial \boldsymbol{\theta}} \sum_{i=1}^{I} \frac{K_i}{K} \left[ \left( \pi_{i1}^{\gamma+1}(\boldsymbol{\theta}) + \pi_{i2}^{\gamma+1}(\boldsymbol{\theta}) \right) - \frac{\gamma + 1}{\gamma} \left( \hat{p}_{i1} \pi_{i1}(\boldsymbol{\theta})^\gamma + \hat{p}_{i2} \pi_{i2}(\boldsymbol{\theta})^\gamma \right) \right] = \boldsymbol{\mathbf{0}}_{2(J+1)} \text{ ,}
\end{equation*}
which is equivalent to
\begin{align*}
	\frac{\partial }{\partial \boldsymbol{\theta}} \sum_{i=1}^{I} \frac{K_i}{K}  & \left[ \left( F_{\boldsymbol{\theta}}(\tau_i, \boldsymbol{x}_{i})^{\gamma+1} + (1-F_{\boldsymbol{\theta}}(\tau_i, \boldsymbol{x}_{i}) )^{\gamma+1} \right) -  \frac{\gamma + 1}{\gamma} \left( \frac{n_i}{K_i} F_{\boldsymbol{\theta}}(\tau_i, \boldsymbol{x}_{i})^\gamma + \frac{K_i - n_i}{K_i} (1- F_{\boldsymbol{\theta}}(\tau_i, \boldsymbol{x}_{i}))^\gamma \right) \right] = \boldsymbol{\mathbf{0}}_{2(J+1)}  \text{ .}
\end{align*}
That is,
\begin{align*}
	\sum_{i=1}^{I} \frac{K_i}{K} &\biggr[ (\gamma + 1) F_{\boldsymbol{\theta}}(\tau_i, \boldsymbol{x}_{i})^\gamma \frac{\partial F_{\boldsymbol{\theta}}(\tau_i, \boldsymbol{x}_{i})}{\partial \boldsymbol{\theta}} + (\gamma + 1) (1 - F_{\boldsymbol{\theta}}(\tau_i, \boldsymbol{x}_{i}))^\gamma \left(-\frac{\partial F_{\boldsymbol{\theta}}(\tau_i, \boldsymbol{x}_{i})}{\partial \boldsymbol{\theta}}\right) \\
	& - \frac{\gamma + 1}{\gamma} \left( \frac{n_i}{K_i} ( \gamma ) F_{\boldsymbol{\theta}}(\tau_i, \boldsymbol{x}_{i})^{\gamma-1} \frac{\partial F_{\boldsymbol{\theta}}(\tau_i, \boldsymbol{x}_{i})}{\partial \boldsymbol{\theta}} \right) \\
	& - \frac{\gamma + 1}{\gamma} \left( \frac{K_i - n_i}{K_i} (\gamma) (1- F_{\boldsymbol{\theta}}(\tau_i, \boldsymbol{x}_{i}))^{\gamma-1} \left(- \frac{\partial F_{\boldsymbol{\theta}}(\tau_i, \boldsymbol{x}_{i})}{\partial \boldsymbol{\theta}} \right) \right) \biggr] = \boldsymbol{\mathbf{0}}_{2(J+1)} 
\end{align*}
then
\begin{align*}
	\sum_{i=1}^{I} \frac{K_i}{K} & \left[ (\gamma + 1) F_{\boldsymbol{\theta}}(\tau_i, \boldsymbol{x}_{i})^\gamma - (\gamma + 1) (1 - F_{\boldsymbol{\theta}}(\tau_i, \boldsymbol{x}_{i}))^\gamma  - ( \gamma + 1) \left( \frac{n_i}{K_i}  F_{\boldsymbol{\theta}}(\tau_i, \boldsymbol{x}_{i})^{\gamma-1}  \right) \right. \\
	& \left.  + ( \gamma + 1) \left( \frac{K_i - n_i}{K_i}  (1- F_{\boldsymbol{\theta}}(\tau_i, \boldsymbol{x}_{i}))^{\gamma-1}   \right) \right]  \frac{\partial F_{\boldsymbol{\theta}}(\tau_i, \boldsymbol{x}_{i})}{\partial \boldsymbol{\theta}} = \boldsymbol{\mathbf{0}}_{2(J+1)}  \text{ ,}
\end{align*}
and therefore
\begin{align*}
	\sum_{i=1}^{I} K_i & \left\{  \biggr[ F_{\boldsymbol{\theta}}(\tau_i, \boldsymbol{x}_{i})^\gamma  - (1 - F_{\boldsymbol{\theta}}(\tau_i, \boldsymbol{x}_{i}))^\gamma  \right. \left.  - \frac{n_i}{K_i} F_{\boldsymbol{\theta}}(\tau_i, \boldsymbol{x}_{i})^{\gamma-1} + \frac{K_i - n_i}{K_i} (1- F_{\boldsymbol{\theta}}(\tau_i, \boldsymbol{x}_{i}))^{\gamma-1}  \biggr] \frac{\partial F_{\boldsymbol{\theta}}(\tau_i, \boldsymbol{x}_{i})}{\partial \boldsymbol{\theta}} \right\} = \boldsymbol{\mathbf{0}}_{2(J+1)}  \text{ .}
\end{align*}
Simplifying, we arrive at
\begin{align*}
	\sum_{i=1}^{I} K_i & \left\{  \biggr[ F_{\boldsymbol{\theta}}(\tau_i, \boldsymbol{x}_{i})^{\gamma-1} \biggr( F_{\boldsymbol{\theta}}(\tau_i, \boldsymbol{x}_{i})  - \frac{n_i}{K_i} \biggr) + \right.  \left. (1 - F_{\boldsymbol{\theta}}(\tau_i, \boldsymbol{x}_{i}))^{\gamma-1} \biggr( - 1 + F_{\boldsymbol{\theta}}(\tau_i, \boldsymbol{x}_{i}) + \frac{K_i - n_i}{K_i} \biggr)   \biggr] \frac{\partial F_{\boldsymbol{\theta}}(\tau_i, \boldsymbol{x}_{i})}{\partial \boldsymbol{\theta}} \right\} = \boldsymbol{\mathbf{0}}_{2(J+1)} 
\end{align*}
or what is equivalent,
\begin{align*}
	\sum_{i=1}^{I} K_i & \left\{  \biggr[ \biggr( F_{\boldsymbol{\theta}}(\tau_i, \boldsymbol{x}_{i})  - \frac{n_i}{K_i} \biggr) \biggr( F_{\boldsymbol{\theta}}(\tau_i, \boldsymbol{x}_{i})^{\gamma-1}  +  (1 - F_{\boldsymbol{\theta}}(\tau_i, \boldsymbol{x}_{i}))^{\gamma-1} \biggr) \biggr] \frac{\partial F_{\boldsymbol{\theta}}(\tau_i, \boldsymbol{x}_{i})}{\partial \boldsymbol{\theta}} \right\} = \boldsymbol{\mathbf{0}}_{2(J+1)}  \text{ .}
\end{align*} 
Based on this result, and by replacing $\frac{\partial F_{\alpha_i, \beta_i}(\tau_i, \boldsymbol{x}_i)}{\partial a_j}$ from equation (\ref{eq:derFaj}), we obtain:
\begin{align*}
	\sum_{i=1}^{I} K_i & \left\{ \left[ \left( \frac{\tau_i^{\beta_i}}{\tau_i^{\beta_i} + \alpha_i^{\beta_i}} - \frac{n_i}{K_i} \right)  \left( \frac{\tau_i^{\beta_i (\gamma-1)}}{ (\tau_i^{\beta_i} + \alpha_i^{\beta_i} )^{(\gamma-1)}}+  \frac{\alpha_i^{\beta_i (\gamma-1)}}{ (\tau_i^{\beta_i} + \alpha_i^{\beta_i} )^{(\gamma-1)}} \right)  \right] \left( -\frac{\beta_i \alpha_i^{\beta_i} \tau_i^{\beta_i}}{(\tau_i^{\beta_i} + \alpha_i^{\beta_i})^2} \right) x_{ij} \right\} = \boldsymbol{\mathbf{0}}_{(J+1)}
\end{align*}
then,
\begin{align*}
	- \sum_{i=1}^{I} & \left\{ \left[ \left( K_i \frac{\tau_i^{\beta_i}}{\tau_i^{\beta_i} + \alpha_i^{\beta_i}} - n_i \right) \left( \frac{\tau_i^{\beta_i (\gamma-1)} \alpha_i^{\beta_i} \tau_i^{\beta_i} + \alpha_i^{\beta_i (\gamma-1)} \alpha_i^{\beta_i} \tau_i^{\beta_i} }{ (\tau_i^{\beta_i} + \alpha_i^{\beta_i} )^{(\gamma-1)}}\right) \right] \left( \frac{\beta_i }{(\tau_i^{\beta_i} + \alpha_i^{\beta_i})^2} \right) x_{ij} \right\} = \boldsymbol{\mathbf{0}}_{(J+1)}
\end{align*}
which is equivalent to
\begin{align}
	- \sum_{i=1}^{I} \left( K_i \frac{\tau_i^{\beta_i}}{\tau_i^{\beta_i} + \alpha_i^{\beta_i}} - n_i \right) \left(  \frac{\tau_i^{\beta_i \gamma} \alpha_i^{\beta_i} + \alpha_i^{\beta_i \gamma} \tau_i^{\beta_i}}{   ( \tau_i^{\beta_i} + \alpha_i^{\beta_i} )^{\gamma +1} }  \right) \beta_i \cdot x_{ij} =  \boldsymbol{\mathbf{0}}_{(J+1)} \text{ .}
	\label{eq:1}
\end{align}

Similarly, by substituting $\frac{\partial F_{\alpha_i, \beta_i}(\tau_i, \boldsymbol{x}_i)}{\partial b_j}$ from equation (\ref{eq:derFbj}), we obtain:
\begin{align*}
	\sum_{i=1}^{I} K_i & \left\{ \left[ \left( \frac{\tau_i^{\beta_i}}{\tau_i^{\beta_i} + \alpha_i^{\beta_i}} - \frac{n_i}{K_i} \right)  \left( \frac{\tau_i^{\beta_i (\gamma-1)}}{ (\tau_i^{\beta_i} + \alpha_i^{\beta_i} )^{(\gamma-1)}}+  \frac{\alpha_i^{\beta_i (\gamma-1)}}{ (\tau_i^{\beta_i} + \alpha_i^{\beta_i} )^{(\gamma-1)}} \right)  \right] \left( \frac{\beta_i \log{\frac{\tau_i}{\alpha_i}} \alpha_i^{\beta_i} \tau_i^{\beta_i}}{(\tau_i^{\beta_i} + \alpha_i^{\beta_i})^2} \right) x_{ij} \right\} = \boldsymbol{\mathbf{0}}_{(J+1)}
\end{align*}
that is,
\begin{align}
	\sum_{i=1}^{I} \left( K_i \frac{\tau_i^{\beta_i}}{\tau_i^{\beta_i} + \alpha_i^{\beta_i}} - n_i \right) \left(  \frac{\tau_i^{\beta_i \gamma} \alpha_i^{\beta_i} + \alpha_i^{\beta_i \gamma} \tau_i^{\beta_i}}{   ( \tau_i^{\beta_i} + \alpha_i^{\beta_i} )^{\gamma +1} }  \right) \log{\frac{\tau_i}{\alpha_i}} \cdot \beta_i \cdot x_{ij} =  \boldsymbol{\mathbf{0}}_{(J+1)} \text{ .}
	\label{eq:2}
\end{align}

From expressions (\ref{eq:1}) and (\ref{eq:2}), we can represent the equations in matrix form as
\[
\sum_{i=1}^{I} \left( -\beta_i \boldsymbol{x}_i, \log \frac{\tau_i}{\alpha_i} \beta_i \boldsymbol{x}_i \right) \left( K_i \frac{\tau_i^{\beta_i}}{\tau_i^{\beta_i} + \alpha_i^{\beta_i}} - n_i \right) \left( \frac{\tau_i^{\beta_i \gamma} \alpha_i^{\beta_i} + \alpha_i^{\beta_i \gamma} \tau_i^{\beta_i}}{(\tau_i^{\beta_i} + \alpha_i^{\beta_i})^{\gamma + 1}} \right) = \boldsymbol{\mathbf{0}}_{2(J+1)}  \text{ .}
\]
\subsection{Proof of Proposition \ref{prop:eclibro}}
\label{suplmaterial_2}
The result presented in the cited paper of Balakrishnan et al (2022) established that:
\[ \sqrt{K} (\hat{\boldsymbol{\theta}}_\gamma - \boldsymbol{\theta}_0) \underset{K \rightarrow \infty}{\overset{\mathcal{L}}{\to}} \mathcal{N}( \boldsymbol{0}_{2(J+1)}, \boldsymbol{J}_\gamma^{-1}(\boldsymbol{\theta}_0) \boldsymbol{K}_\gamma (\boldsymbol{\theta}_0) \boldsymbol{J}_\gamma^{-1}(\boldsymbol{\theta}_0)),
\]    
where
\[
\boldsymbol{J}_\gamma (\boldsymbol{\theta}) = \sum_{i=1}^{I} \sum_{j=1}^{2} \frac{K_i}{K} \boldsymbol{u}_{ij} (\boldsymbol{\theta}) \boldsymbol{u}_{ij}^T (\boldsymbol{\theta}) \pi_{ij}^{\gamma + 1} (\boldsymbol{\theta}),
\]
\[
\boldsymbol{K}_{\gamma}(\boldsymbol{\theta}) = \sum_{i=1}^{I} \frac{K_i}{K} \sum_{j=1}^2 u_{ij}(\boldsymbol{\theta}) u_{ij}(\boldsymbol{\theta})^T \pi_{ij}(\boldsymbol{\theta})^{2\gamma+1} - \sum_{i=1}^{I} \frac{K_i}{K} \xi_{i,\gamma}(\boldsymbol{\theta}) \xi_{i,\gamma}(\boldsymbol{\theta})^T 
\]
with \[ \boldsymbol{u}_{ij} (\boldsymbol{\theta}) = \frac{\partial \log \pi_{ij} (\boldsymbol{\theta})}{\partial \boldsymbol{\theta}} \textit{   and   }
\xi_{i,\gamma}(\boldsymbol{\theta}) =  \sum_{j=1}^2 u_{ij}(\boldsymbol{\theta}) \pi_{ij}(\boldsymbol{\theta})^{\gamma+1} . 
\]
We have
\[
\boldsymbol{u}_{i1} (\boldsymbol{\theta}) = \frac{\partial \log \pi_{i1} (\boldsymbol{\theta})}{\partial \boldsymbol{\theta}} = \frac{1}{F_{\boldsymbol{\theta}}(\tau_i, \boldsymbol{x}_i)} \frac{\partial F_{\boldsymbol{\theta}}(\tau_i, \boldsymbol{x}_i)}{\partial \boldsymbol{\theta}}
\]
\[
\boldsymbol{u}_{i2} (\boldsymbol{\theta}) = \frac{\partial \log \pi_{i2} (\boldsymbol{\theta})}{\partial \boldsymbol{\theta}} = - \frac{1}{R_{\boldsymbol{\theta}}(\tau_i, \boldsymbol{x}_i)} \frac{\partial F_{\boldsymbol{\theta}}(\tau_i, \boldsymbol{x}_i)}{\partial \boldsymbol{\theta}}.
\]
On the other hand,
\begin{align*}
	\boldsymbol{u}_{i1} (\boldsymbol{\theta}) \boldsymbol{u}_{i1} (\boldsymbol{\theta})^T \pi_{i1} (\boldsymbol{\theta})^{\gamma +1} &= \frac{1}{F_{\boldsymbol{\theta}}(\tau_i, \boldsymbol{x}_i)^2} \frac{\partial F_{\boldsymbol{\theta}}(\tau_i, \boldsymbol{x}_i)}{\partial \boldsymbol{\theta}} \frac{\partial F_{\boldsymbol{\theta}}(\tau_i, \boldsymbol{x}_i)}{\partial \boldsymbol{\theta}^T} F_{\boldsymbol{\theta}}(\tau_i, \boldsymbol{x}_i)^{\gamma+1}  \frac{\partial F_{\boldsymbol{\theta}}(\tau_i, \boldsymbol{x}_i)}{\partial \boldsymbol{\theta}} \frac{\partial F_{\boldsymbol{\theta}}(\tau_i, \boldsymbol{x}_i)}{\partial \boldsymbol{\theta}^T} F_{\boldsymbol{\theta}}(\tau_i, \boldsymbol{x}_i)^{\gamma-1}  
\end{align*}
and
\begin{align*}
	\boldsymbol{u}_{i2} (\boldsymbol{\theta}) \boldsymbol{u}_{i2} (\boldsymbol{\theta})^T \pi_{i2} (\boldsymbol{\theta})^{\gamma +1} &= \frac{1}{R_{\boldsymbol{\theta}}(\tau_i, \boldsymbol{x}_i)^2} \frac{\partial F_{\boldsymbol{\theta}}(\tau_i, \boldsymbol{x}_i)}{\partial \boldsymbol{\theta}} \frac{\partial F_{\boldsymbol{\theta}}(\tau_i, \boldsymbol{x}_i)}{\partial \boldsymbol{\theta}^T} R_{\boldsymbol{\theta}}(\tau_i, \boldsymbol{x}_i)^{\gamma+1}  \frac{\partial F_{\boldsymbol{\theta}}(\tau_i, \boldsymbol{x}_i)}{\partial \boldsymbol{\theta}} \frac{\partial F_{\boldsymbol{\theta}}(\tau_i, \boldsymbol{x}_i)}{\partial \boldsymbol{\theta}^T} R_{\boldsymbol{\theta}}(\tau_i, \boldsymbol{x}_i)^{\gamma-1}. 
\end{align*}
Thus,
\[
\boldsymbol{J}_\gamma (\boldsymbol{\theta}) = \sum_{i=1}^{I} \frac{K_i}{K} \biggr( F_{\boldsymbol{\theta}}(\tau_i, \boldsymbol{x}_i)^{\gamma-1} + R_{\boldsymbol{\theta}}(\tau_i, \boldsymbol{x}_i)^{\gamma-1} \biggr) \frac{ \partial F_{\boldsymbol{\theta}}(\tau_i, \boldsymbol{x}_i)}{\partial \boldsymbol{\theta}} \frac{ \partial F_{\boldsymbol{\theta}}(\tau_i, \boldsymbol{x}_i)}{\partial \boldsymbol{\theta}^T}.
\]
For \(K_\gamma(\boldsymbol{\theta})\), we have
\begin{align*}  
	\xi_{i,\gamma}(\boldsymbol{\theta}) &= \sum_{j=1}^2 u_{ij}(\boldsymbol{\theta}) \pi_{ij}(\boldsymbol{\theta})^{\gamma+1}  \\
	&= u_{i1}(\boldsymbol{\theta}) \pi_{i1}(\boldsymbol{\theta})^{\gamma+1} + u_{i2}(\boldsymbol{\theta}) \pi_{i2}(\boldsymbol{\theta})^{\gamma+1} \\
	&= \frac{\partial F_{\boldsymbol{\theta}}(\tau_i, \boldsymbol{x}_i)}{\partial \boldsymbol{\theta}} F_{\boldsymbol{\theta}}(\tau_i, \boldsymbol{x}_i)^{\gamma} - \frac{\partial F_{\boldsymbol{\theta}}(\tau_i, \boldsymbol{x}_i)}{\partial \boldsymbol{\theta}} R_{\boldsymbol{\theta}}(\tau_i, \boldsymbol{x}_i)^{\gamma}.
\end{align*}
and
\begin{align*}  
	\xi_{i,\gamma}(\boldsymbol{\theta}) \xi_{i,\gamma}(\boldsymbol{\theta})^T 
	&= \biggr[ \frac{\partial F_{\boldsymbol{\theta}}(\tau_i, \boldsymbol{x}_i)}{\partial \boldsymbol{\theta} } F_{\boldsymbol{\theta}}(\tau_i, \boldsymbol{x}_i)^{\gamma} - \frac{\partial F_{\boldsymbol{\theta}}(\tau_i, \boldsymbol{x}_i)}{\partial \boldsymbol{\theta} } R_{\boldsymbol{\theta}}(\tau_i, \boldsymbol{x}_i)^{\gamma} \biggr] \\
	&\quad \cdot \biggr[ \frac{\partial F_{\boldsymbol{\theta}}(\tau_i, \boldsymbol{x}_i)}{\partial \boldsymbol{\theta}^T } F_{\boldsymbol{\theta}}(\tau_i, \boldsymbol{x}_i)^{\gamma} - \frac{\partial F_{\boldsymbol{\theta}}(\tau_i, \boldsymbol{x}_i)}{\partial \boldsymbol{\theta}^T } R_{\boldsymbol{\theta}}(\tau_i, \boldsymbol{x}_i)^{\gamma} \biggr] \\
	&= \frac{\partial F_{\boldsymbol{\theta}}(\tau_i, \boldsymbol{x}_i)}{\partial \boldsymbol{\theta} } \frac{\partial F_{\boldsymbol{\theta}}(\tau_i, \boldsymbol{x}_i)}{\partial \boldsymbol{\theta}^T } \biggr[ F_{\boldsymbol{\theta}}(\tau_i, \boldsymbol{x}_i)^{2\gamma} + R_{\boldsymbol{\theta}}(\tau_i, \boldsymbol{x}_i)^{2\gamma} - 2 F_{\boldsymbol{\theta}}(\tau_i, \boldsymbol{x}_i)^\gamma R_{\boldsymbol{\theta}}(\tau_i, \boldsymbol{x}_i)^\gamma \biggr].
\end{align*}
Substituting into \(K_\gamma(\boldsymbol{\theta})\), we get
\allowdisplaybreaks
\begin{align*}
	\boldsymbol{K}_{\gamma}(\boldsymbol{\theta}) &=
	\sum_{i=1}^{I} \frac{K_i}{K}
	\frac{\partial F_{\boldsymbol{\theta}}(\tau_i, \boldsymbol{x}_i)}{\partial \boldsymbol{\theta} } \frac{\partial F_{\boldsymbol{\theta}}(\tau_i, \boldsymbol{x}_i)}{\partial \boldsymbol{\theta}^T } \biggr[ F_{\boldsymbol{\theta}}(\tau_i, \boldsymbol{x}_i)^{2\gamma-1} + R_{\boldsymbol{\theta}}(\tau_i, \boldsymbol{x}_i)^{2\gamma-1} \\
	&\quad - F_{\boldsymbol{\theta}}(\tau_i, \boldsymbol{x}_i)^{2\gamma} - R_{\boldsymbol{\theta}}(\tau_i, \boldsymbol{x}_i)^{2\gamma} + 2 F_{\boldsymbol{\theta}}(\tau_i, \boldsymbol{x}_i) R_{\boldsymbol{\theta}}(\tau_i, \boldsymbol{x}_i) \biggr] \\
	&= \sum_{i=1}^{I} \frac{K_i}{K}
	\frac{\partial F_{\boldsymbol{\theta}}(\tau_i, \boldsymbol{x}_i)}{\partial \boldsymbol{\theta} } \frac{\partial F_{\boldsymbol{\theta}}(\tau_i, \boldsymbol{x}_i)}{\partial \boldsymbol{\theta}^T } \biggr[ F_{\boldsymbol{\theta}}(\tau_i, \boldsymbol{x}_i)^{2\gamma-1}R_{\boldsymbol{\theta}}(\tau_i, \boldsymbol{x}_i) \\
	&\quad + R_{\boldsymbol{\theta}}(\tau_i, \boldsymbol{x}_i)^{2\gamma-1}F_{\boldsymbol{\theta}}(\tau_i, \boldsymbol{x}_i) - 2 F_{\boldsymbol{\theta}}(\tau_i, \boldsymbol{x}_i)^\gamma R_{\boldsymbol{\theta}}(\tau_i, \boldsymbol{x}_i)^\gamma \biggr] \\
	&= \sum_{i=1}^{I} \frac{K_i}{K}
	\frac{\partial F_{\boldsymbol{\theta}}(\tau_i, \boldsymbol{x}_i)}{\partial \boldsymbol{\theta} } \frac{\partial F_{\boldsymbol{\theta}}(\tau_i, \boldsymbol{x}_i)}{\partial \boldsymbol{\theta}^T } F_{\boldsymbol{\theta}}(\tau_i, \boldsymbol{x}_i) R_{\boldsymbol{\theta}}(\tau_i, \boldsymbol{x}_i) \biggr[ F_{\boldsymbol{\theta}}(\tau_i, \boldsymbol{x}_i)^{2\gamma-2} \\
	&\quad + R_{\boldsymbol{\theta}}(\tau_i, \boldsymbol{x}_i)^{2\gamma-2} - 2 F_{\boldsymbol{\theta}}(\tau_i, \boldsymbol{x}_i)^{\gamma-1} R_{\boldsymbol{\theta}}(\tau_i, \boldsymbol{x}_i)^{\gamma-1} \biggr] \\
	&= \sum_{i=1}^{I} \frac{K_i}{K} F_{\boldsymbol{\theta}}(\tau_i, \boldsymbol{x}_i) R_{\boldsymbol{\theta}}(\tau_i, \boldsymbol{x}_i) \biggr( F_{\boldsymbol{\theta}}(\tau_i, \boldsymbol{x}_i)^{\gamma-1} + R_{\boldsymbol{\theta}}(\tau_i, \boldsymbol{x}_i)^{\gamma-1} \biggr)^2 \frac{ \partial F_{\boldsymbol{\theta}}(\tau_i, \boldsymbol{x}_i)}{\partial \boldsymbol{\theta} } \frac{ \partial F_{\boldsymbol{\theta}}(\tau_i, \boldsymbol{x}_i)}{\partial \boldsymbol{\theta}^T}.
\end{align*}

\subsection{Proof of Theorem \ref{th:asim_distr}}
\label{sec:th:asim_distr}
The result presented in Proposition \ref{prop:libro} establishes that:
\allowdisplaybreaks
\begin{equation*}
	\sqrt{K} (\hat{\boldsymbol{\theta}}_\gamma - \boldsymbol{\theta}_0) \underset{K \rightarrow \infty}{\overset{\mathcal{L}}{\to}} N\biggr(\mathbf{0}_{2(J+1)}, \boldsymbol{J}_\gamma^{-1}(\boldsymbol{\theta}_0) \boldsymbol{K}_\gamma (\boldsymbol{\theta}_0) \boldsymbol{J}_\gamma^{-1}(\boldsymbol{\theta}_0)\biggr)
\end{equation*}
with
\begin{equation*}
	\boldsymbol{J}_\gamma (\boldsymbol{\theta}) = \sum_{i=1}^{I}   \frac{K_i}{K}  \biggr( F_{\boldsymbol{\theta}}(\tau_i, \boldsymbol{x}_i)^{\gamma-1} + R_{\boldsymbol{\theta}}(\tau_i, \boldsymbol{x}_i)^{\gamma-1} \biggr) \frac{ \partial F_{\boldsymbol{\theta}}(\tau_i, \boldsymbol{x}_i)}{\partial \boldsymbol{\theta} } \frac{ \partial F_{\boldsymbol{\theta}}(\tau_i, \boldsymbol{x}_i)}{\partial \boldsymbol{\theta}^T},
\end{equation*}
\begin{equation*}
	\boldsymbol{K}_{\gamma}(\boldsymbol{\theta}) = \sum_{i=1}^{I}   \frac{K_i}{K} F_{\boldsymbol{\theta}}(\tau_i, \boldsymbol{x}_i) R_{\boldsymbol{\theta}}(\tau_i, \boldsymbol{x}_i)  \biggr( F_{\boldsymbol{\theta}}(\tau_i, \boldsymbol{x}_i)^{\gamma-1} + R_{\boldsymbol{\theta}}(\tau_i, \boldsymbol{x}_i)^{\gamma-1} \biggr)^2 \frac{ \partial F_{\boldsymbol{\theta}}(\tau_i, \boldsymbol{x}_i)}{\partial \boldsymbol{\theta} } \frac{ \partial F_{\boldsymbol{\theta}}(\tau_i, \boldsymbol{x}_i)}{\partial \boldsymbol{\theta}^T}.
\end{equation*}
In the context of a log-logistic lifetime distribution, we may substitute \( F_{\boldsymbol{\theta}}(\tau_i, x_{ij}) \) with \( F_{\alpha_i, \beta_i}(\tau_i, \boldsymbol{x}_i) \) and \( R_{\boldsymbol{\theta}}(\tau_i, x_{ij}) \) with \( R_{\alpha_i, \beta_i}(\tau_i, \boldsymbol{x}_i) \). By differentiating \( F_{\alpha_i, \beta_i}(\tau_i, \boldsymbol{x}_i) \) with respect to \( \boldsymbol{\theta} = (a_j, b_j) \), we obtain the following partial derivatives:
\begin{equation}
	\frac{\partial F_{\alpha_i, \beta_i}(\tau_i, \boldsymbol{x}_i)}{\partial a_j} = \frac{\partial F_{\alpha_i, \beta_i}(\tau_i, \boldsymbol{x}_i)}{\partial \alpha_i} \cdot \frac{\partial \alpha_i}{ \partial a_j} = \left( -\frac{\beta_i \alpha_i^{\beta_i - 1} \tau_i^{\beta_i}}{(\tau_i^{\beta_i} + \alpha_i^{\beta_i})^2} \right) \cdot \alpha_i x_{ij} = -\frac{ \alpha_i^{\beta_i} \tau_i^{\beta_i}}{(\tau_i^{\beta_i} + \alpha_i^{\beta_i})^2}  \cdot \beta_i x_{ij} \text{ ,}
	\label{eq:derFaj}
\end{equation}
and
\begin{equation}
	\frac{\partial F_{\alpha_i, \beta_i}(\tau_i, \boldsymbol{x}_i)}{\partial b_j} = \frac{\partial F_{\alpha_i, \beta_i}(\tau_i, \boldsymbol{x}_i)}{\partial \beta_i} \cdot \frac{\partial \beta_i}{ \partial b_j} = \frac{\tau_i^{\beta_i} \alpha_i^{\beta_i} \log \tau_i - \tau_i^{\beta_i} \alpha_i^{\beta_i} \log \alpha_i}{(\tau_i^{\beta_i} + \alpha_i^{\beta_i})^2} \cdot \beta_i x_{ij}  = \frac{\tau_i^{\beta_i} \alpha_i^{\beta_i} \log \frac{\tau_i}{\alpha_i} }{(\tau_i^{\beta_i} + \alpha_i^{\beta_i})^2} \cdot \beta_i x_{ij} \text{ .}
	\label{eq:derFbj}
\end{equation}
From the above, we obtain:
\begin{align*}
	\frac{ \partial F_{\boldsymbol{\theta}}(\tau_i, \boldsymbol{x}_i)}{\partial \boldsymbol{\theta} } \frac{ \partial F_{\boldsymbol{\theta}}(\tau_i, \boldsymbol{x}_i)}{\partial \boldsymbol{\theta}^T} &= \begin{pmatrix}
		-\frac{ \alpha_i^{\beta_i} \tau_i^{\beta_i}}{(\tau_i^{\beta_i} + \alpha_i^{\beta_i})^2} \cdot \beta_i x_{ij} \text{,} & \frac{\tau_i^{\beta_i} \alpha_i^{\beta_i} \log \frac{\tau_i}{\alpha_i}}{(\tau_i^{\beta_i} + \alpha_i^{\beta_i})^2} \cdot \beta_i x_{ij}
	\end{pmatrix}  \cdot \begin{pmatrix}
		-\frac{ \alpha_i^{\beta_i} \tau_i^{\beta_i}}{(\tau_i^{\beta_i} + \alpha_i^{\beta_i})^2} \cdot \beta_i x_{ij} \\ \frac{\tau_i^{\beta_i} \alpha_i^{\beta_i} \log \frac{\tau_i}{\alpha_i}}{(\tau_i^{\beta_i} + \alpha_i^{\beta_i})^2} \cdot \beta_i x_{ij}
	\end{pmatrix} \\
	&= \begin{pmatrix}
		\left( \frac{\alpha_i^{2\beta_i} \tau_i^{2\beta_i}}{(\tau_i^{\beta_i} + \alpha_i^{\beta_i})^4} \cdot \beta_i^2 x_{ij}^2 \right) & \left( -\frac{\alpha_i^{2\beta_i} \tau_i^{2\beta_i} \log \frac{\tau_i}{\alpha_i}}{(\tau_i^{\beta_i} + \alpha_i^{\beta_i})^4} \cdot \beta_i^2 x_{ij}^2 \right) \\
		\left( -\frac{\alpha_i^{2\beta_i} \tau_i^{2\beta_i} \log \frac{\tau_i}{\alpha_i}}{(\tau_i^{\beta_i} + \alpha_i^{\beta_i})^4} \cdot \beta_i^2 x_{ij}^2 \right) & \left( \frac{\tau_i^{2\beta_i} \alpha_i^{2\beta_i} (\log \frac{\tau_i}{\alpha_i})^2}{(\tau_i^{\beta_i} + \alpha_i^{\beta_i})^4} \cdot \beta_i^2 x_{ij}^2 \right) \end{pmatrix} \\
	&= \frac{\alpha_i^{2\beta_i}}{(\tau_i^{\beta_i} + \alpha_i^{\beta_i})^2} \cdot \frac{\tau_i^{2\beta_i}}{(\tau_i^{\beta_i} + \alpha_i^{\beta_i})^2} \cdot \begin{pmatrix}
		\beta_i^2 x_{ij}^2 & - \log \frac{\tau_i}{\alpha_i} \cdot \beta_i^2 x_{ij}^2 \\
		- \log \frac{\tau_i}{\alpha_i} \cdot \beta_i^2 x_{ij}^2 &  (\log \frac{\tau_i}{\alpha_i} \cdot \beta_i)^2 x_{ij}^2 \end{pmatrix}  \\
	&=  F_{\alpha_i, \beta_i}(\tau_i, \boldsymbol{x_i})^2 \cdot R_{\alpha_i, \beta_i}(\tau_i, \boldsymbol{x_i})^2  \cdot \begin{pmatrix}
		\beta_i^2 x_{ij}^2 & - \log \frac{\tau_i}{\alpha_i} \cdot \beta_i^2 x_{ij}^2 \\
		- \log \frac{\tau_i}{\alpha_i} \cdot \beta_i^2 x_{ij}^2 &  (\log \frac{\tau_i}{\alpha_i} \cdot \beta_i)^2 x_{ij}^2 \end{pmatrix} \\
	&= F_{\alpha_i, \beta_i}(\tau_i, \boldsymbol{x_i})^2 \cdot R_{\alpha_i, \beta_i}(\tau_i, \boldsymbol{x_i})^2  \cdot \boldsymbol{M}_i
\end{align*}
where
\[
\boldsymbol{M}_i = \begin{pmatrix}
	\beta_i^2 \boldsymbol{x_i} \boldsymbol{x_i}^T & -\log \frac{\tau_i}{\alpha_i} \beta_i^2 \boldsymbol{x_i} \boldsymbol{x_i}^T \\
	-\log \frac{\tau_i}{\alpha_i} \beta_i^2 \boldsymbol{x_i} \boldsymbol{x_i}^T & \left( \log \frac{\tau_i}{\alpha_i} \beta_i \right)^2  \boldsymbol{x_i} \boldsymbol{x_i}^T
\end{pmatrix} \text{ .}
\]
Thus,
\[
\boldsymbol{J}_\gamma (\boldsymbol{\theta}) = \sum_{i=1}^{I} \frac{K_i}{K} \boldsymbol{M}_i \left( R_{\alpha_i, \beta_i} (\tau_i, x_i) F_{\alpha_i, \beta_i} (\tau_i, x_i) \right)^2 \left( F_{\alpha_i, \beta_i} (\tau_i, x_i)^{\gamma - 1} + R (\tau_i, x_i)^{\gamma - 1} \right) 
\]
and
\[
\boldsymbol{K}_\gamma(\boldsymbol{\theta}) = \sum_{i=1}^{I} \frac{K_i}{K} \left( F_{\alpha_i, \beta_i}(\tau_i, \boldsymbol{x_i})^{\gamma-1} + R_{\alpha_i, \beta_i}(\tau_i, \boldsymbol{x_i})^{\gamma-1} \right)^2 F_{\alpha_i, \beta_i}(\tau_i, \boldsymbol{x_i})^3 \cdot R_{\alpha_i, \beta_i}(\tau_i, \boldsymbol{x_i})^3  \cdot \boldsymbol{M}_i.
\]

\subsection{Proof of Proposition \ref{result:rao}}
\label{sec:proof:result:rao}
First, since \(\mathbb{E}[n_i] = K_i \frac{\tau_i^{\beta_i}}{\tau_i^{\beta_i} + \alpha_i^{\beta_i}}\),
\[
\mathbb{E}[\boldsymbol{U}_{\gamma}(\boldsymbol{\theta})] =  \sum_{i=1}^I \left( -\beta_i \boldsymbol{x}_i, \log \frac{\tau_i}{\alpha_i} \beta_i \boldsymbol{x}_i \right) \left( K_i \frac{\tau_i^{\beta_i}}{\tau_i^{\beta_i} + \alpha_i^{\beta_i}} - K_i \frac{\tau_i^{\beta_i}}{\tau_i^{\beta_i} + \alpha_i^{\beta_i}} \right) \left( \frac{\tau_i^{\beta_i \gamma} \alpha_i^{\beta_i} + \alpha_i^{\beta_i \gamma} \tau_i^{\beta_i}}{(\tau_i^{\beta_i} + \alpha_i^{\beta_i})^{\gamma + 1}} \right) = \boldsymbol{0}.
\]
Regarding the variance, we have
\begin{align*}
	\text{Var}(\boldsymbol{U}_{\gamma}(\boldsymbol{\theta})) &=  \sum_{i=1}^I \left( -\beta_i \boldsymbol{x}_i, \log \frac{\tau_i}{\alpha_i} \beta_i \boldsymbol{x}_i \right)^2 \text{Var}\left( K_i \frac{\tau_i^{\beta_i}}{\tau_i^{\beta_i} + \alpha_i^{\beta_i}} - n_i \right) \left( \frac{\tau_i^{\beta_i \gamma} \alpha_i^{\beta_i} + \alpha_i^{\beta_i \gamma} \tau_i^{\beta_i}}{(\tau_i^{\beta_i} + \alpha_i^{\beta_i})^{\gamma + 1}} \right)^2 \\
	&= \left( -\beta_i \boldsymbol{x}_i, \log \frac{\tau_i}{\alpha_i} \beta_i \boldsymbol{x}_i \right)^2 K_i \frac{\tau_i^{\beta_i}}{\tau_i^{\beta_i} + \alpha_i^{\beta_i}} \left(1 - \frac{\tau_i^{\beta_i}}{\tau_i^{\beta_i} + \alpha_i^{\beta_i}} \right) \left( \frac{\tau_i^{\beta_i \gamma} \alpha_i^{\beta_i} + \alpha_i^{\beta_i \gamma} \tau_i^{\beta_i}}{(\tau_i^{\beta_i} + \alpha_i^{\beta_i})^{\gamma + 1}} \right)^2 \\
	&=  \begin{pmatrix}
		\beta_i^2 \boldsymbol{x_i} \boldsymbol{x_i}^T & -\log \frac{\tau_i}{\alpha_i} \beta_i^2 \boldsymbol{x_i} \boldsymbol{x_i}^T\\
		-\log \frac{\tau_i}{\alpha_i} \beta_i^2 \boldsymbol{x_i} \boldsymbol{x_i}^T & \left( \log \frac{\tau_i}{\alpha_i} \beta_i \right)^2  \boldsymbol{x_i} \boldsymbol{x_i}^T
	\end{pmatrix} K_i \left( \frac{\tau_i^{\beta_i}}{\tau_i^{\beta_i} + \alpha_i^{\beta_i}} \right)^3 \left(\frac{\alpha_i^{\beta_i}}{\tau_i^{\beta_i} + \alpha_i^{\beta_i}} \right)^3 \\
	&\quad \left( \frac{\tau_i^{\beta_i (\gamma-1)} \alpha_i^{\beta_i} + \alpha_i^{\beta_i (\gamma-1)} \tau_i^{\beta_i}}{(\tau_i^{\beta_i} + \alpha_i^{\beta_i})^{\gamma - 1}} \right)^2 \\
	&=  \sum_{i=1}^I K_i \left( F_{\alpha_i, \beta_i}(\tau_i, \boldsymbol{x_i})^{\gamma-1} + R_{\alpha_i, \beta_i}(\tau_i, \boldsymbol{x_i})^{\gamma-1} \right)^2 F_{\alpha_i, \beta_i}(\tau_i, \boldsymbol{x_i})^3 \cdot R_{\alpha_i, \beta_i}(\tau_i, \boldsymbol{x_i})^3  \cdot \boldsymbol{M}_i \\
	&= K \cdot \boldsymbol{K}_\gamma(\boldsymbol{\theta}).
\end{align*}
Therefore, we have that
\[
\sqrt{K} \boldsymbol{U}_{\gamma}(\tilde{\boldsymbol{\theta}}_{\gamma}) \underset{K \rightarrow \infty}{\overset{ \mathcal{L}}{\to}} N\biggr(\boldsymbol{0}_{2(J+1)},  \boldsymbol{K}_{\gamma}(\boldsymbol{\theta}_0) \biggr).
\]
From this result, it follows that
\[
\sqrt{K}  \boldsymbol{Q}_{\gamma}(\boldsymbol{\theta}_0)^T \boldsymbol{U}_{\gamma}(\tilde{\boldsymbol{\theta}}_{\gamma})  \underset{K \rightarrow \infty}{\overset{ \mathcal{L}}{\to}} N\biggr(\boldsymbol{0}_{2(J+1)}, \boldsymbol{Q}_{\gamma}(\boldsymbol{\theta}_0)^T \boldsymbol{K}_{\gamma}(\boldsymbol{\theta}_0) \boldsymbol{Q}_{\gamma}(\boldsymbol{\theta}_0)\biggr).
\]
Since 
\(
\boldsymbol{Q}_{\gamma}(\boldsymbol{\theta}_0)^T \boldsymbol{K}_{\gamma}(\boldsymbol{\theta}_0) \boldsymbol{Q}_{\gamma}(\boldsymbol{\theta}_0) \biggr(\boldsymbol{Q}_{\gamma}(\boldsymbol{\theta}_0)^T \boldsymbol{K}_{\gamma}(\boldsymbol{\theta}_0) \boldsymbol{Q}_{\gamma}(\boldsymbol{\theta}_0)\biggr)^{-1} = \boldsymbol{I},
\)
we can conclude that
\[
K \boldsymbol{U}_{\gamma}(\tilde{\boldsymbol{\theta}}_{\gamma})^T \boldsymbol{Q}_{\gamma}(\boldsymbol{\theta}_0) \biggr(\boldsymbol{Q}_{\gamma}(\boldsymbol{\theta}_0)^T \boldsymbol{K}_{\gamma}(\boldsymbol{\theta}_0) \boldsymbol{Q}_{\gamma}(\boldsymbol{\theta}_0)\biggr)^{-1} \boldsymbol{Q}_{\gamma}(\boldsymbol{\theta}_0)^T \boldsymbol{U}_{\gamma}(\tilde{\boldsymbol{\theta}}_{\gamma}) \underset{K \rightarrow \infty}{\overset{ \mathcal{L}}{\to}} \chi^2_r .
\]

\end{document}